\newtheorem{theorem}{Theorem}[section]
\newtheorem{lemma}[theorem]{Lemma}
\newtheorem{proposition}[theorem]{Proposition}
\def\ta{\tilde a}
\def\tb{\tilde b}
\def\half{ \frac{1}{2}}
\def\D{\partial}
\def\R{{\mathbb R}}
\def\N{{\mathbb N}}
\def\nint{\mathop{\diagup\kern-13.0pt\int}}
\def\bas{\begin{align*}}
\def\eas{\end{align*}}
\def\bi{\begin{itemize}}
\def\ei{\end{itemize}}
\def\emph#1{{\it #1}}
\def\AA{{\mathcal A}}
\def\BB{{\mathcal B}}
\def\eps{{\epsilon}}
\theoremstyle{definition}
\newtheorem{remark}[theorem]{Remark}
\numberwithin{equation}{section}
\begin{document}

\title{The Dispersion Generalized Benjamin-Ono Equation}

\author{Albert Ai}
\address{Department of Mathematics, University of Wisconsin, Madison}
\email{aai@math.wisc.edu}

\author{Grace Liu}
\address{Department of Mathematics, University of California at Berkeley}
\email{graceliu0511@berkeley.edu}

\subjclass{35Q35, 35B65}

\begin{abstract}
We consider the well-posedness of the family of dispersion generalized Benjamin-Ono equations. Earlier work of Herr-Ionescu-Kenig-Koch established well-posedness with data in $L^2$, by using a discretized gauge transform in the setting of Bourgain spaces. In this article, we remain in the simpler functional setting of Sobolev spaces, and instead combine a pseudodifferential gauge transform, a paradifferential normal form, and a variable coefficient Strichartz analysis to establish well-posedness in negative-exponent Sobolev spaces. Our result coincides with the classical well-posedness results obtained at the Benjamin-Ono and KdV endpoints.
\end{abstract}

\keywords{Benjamin-Ono equation, low regularity, normal forms, Strichartz estimates, bilinear estimates}

\maketitle
\addtocontents{toc}{\protect\setcounter{tocdepth}{1}}
\tableofcontents

\section{Introduction}

In this article, we consider the Cauchy problem for the dispersion-generalized Benjamin-Ono equation,
\begin{equation}\label{BO}
(\D_t - |D|^\alpha \D_x) \phi = \half \D_x(\phi^2), \qquad \phi(0) = \phi_0,
\end{equation}
where $\phi: \R^{1 + 1} \rightarrow \R$, and $|D|^\alpha$ denotes the Fourier multiplier with symbol $|\xi|^\alpha$. The dispersion exponent $\alpha + 1$ may take a range of values; notably, \eqref{BO} corresponds to
\begin{itemize}
\item the classical Benjamin-Ono equation when $\alpha = 1$,
\item the KdV equation when $\alpha = 2$, and
\item the Burgers' equation when $\alpha = 0$.
\end{itemize}
In addition, \eqref{BO} has an order of dispersion reminiscent of the capillary-gravity water waves system when $\alpha = 1/2$, and the pure gravity water waves system when $\alpha = -1/2$. In the current article, we will be considering primarily the range $\alpha \in [1, 2]$ between the classical Benjamin-Ono and KdV equations.

\

The generalized Benjamin-Ono equation \eqref{BO} is Hamiltonian, with conserved quantities
\[
M(\phi) = \int \phi^2 \, dx, \qquad E(\phi) = \int \half \left||D|^\frac{\alpha}{2} \phi \right|^2 + \frac13 \phi^3 \, dx.
\]
Further, it has a scale invariance,
\[
\phi(t, x) \mapsto \lambda^\alpha \phi(\lambda^{\alpha + 1} t, \lambda x)
\]
with scale invariant Sobolev space $H^{\frac12 - \alpha}$. In particular, \eqref{BO} is $L^2$ critical for $\alpha = \frac12$ and energy critical for $\alpha = \frac13$. 

We recall that in the classical Benjamin-Ono setting with $\alpha = 1$, \eqref{BO} exhibits a quasilinear character, due to low-high frequency interactions aggravated by the derivative in the nonlinearity. In particular, the classical Benjamin-Ono equation satisfies only a continuous dependence on initial data, even at high regularity. It was proved by Molinet-Saut-Tzvetkov \cite{BOill} that this quasilinear character extends as well to the dispersion generalized setting, as soon as $\alpha < 2$. 

Extensive work has been done regarding the well-posedness of the Benjamin-Ono and KdV equations, corresponding to $\alpha = 1$ and $2$; see, respectively, \cite{ifrim2017well} and \cite{ifrim2022dispersive}, and the references therein. Also see Tao \cite{taobook} for a more complete discussion. Here, we highlight some key well-posedness thresholds and the corresponding methods, developed primarily in the course of the study of the classical Benjamin-Ono equation. 

We begin with the $H^1$ well-posedness for the classical Benjamin-Ono equation, a significant threshold obtained by Tao in \cite{taoBO} using a nonlinear \emph{gauge transformation}. By combining this gauge transformation with the use of $X^{s, b}$ spaces, Ionescu-Kenig \cite{IKbo} were able to prove the $L^2$ local (and hence global) well-posedness for the classical Benjamin-Ono equation. 

Several authors have since presented improved results using simplified proofs of the $L^2$ well-posedness. For instance, Molinet-Pilod \cite{MPcauchy} presented a simplified proof with a stronger unconditional uniqueness in $H^s$ for $s > \frac14$. More recently, Ifrim-Tataru \cite{ifrim2017well} provided another proof of $L^2$ well-posedness for the classical Benjamin-Ono using a two-part transformation combining paradifferential normal forms with the gauge transform, while avoiding the use of $X^{s, b}$ spaces. Finally, using the method of commuting flows and after Talbut's \cite{Tlowreg} work on conservation laws at negative regularities $H^s$, $s \in (-\half, 0)$, Killip-Laurens-Visan \cite{KLVsharp} established well-posedness of the classical Benjamin-Ono equation in $H^{s}$ for $s > -\frac12$.

For work regarding the family of dispersion generalized models, see \cite{molinet2018well} and the references therein. In particular, Herr \cite{herr} established the local well-posedness of \eqref{BO} for $\alpha \in (1, 2)$ in $H^s \cap \dot H^{\half - \frac{1}{\alpha}}$ with $s > \frac34(1 - \alpha)$. Here, the restriction to $\dot H^{\half - \frac{1}{\alpha}}$ may be viewed as a vanishing low frequency assumption on the initial data. Subsequently, Herr-Ionescu-Kenig-Koch \cite{herr2010differential} removed the low frequency assumption for the case of $L^2$ data, thus generalizing the earlier $L^2$ well-posedness result of Ionescu-Kenig for classical Benjamin-Ono to the range of dispersions $\alpha \in (1, 2)$. Their approach uses a discretized pseudodifferential gauge transform, combined with $X^{s, b}$ spaces. For the low-dispersion range $\alpha \in (0, 1)$, Molinet-Pilod-Vento \cite{molinet2018well} established local well-posedness in $H^s$ with $s > \frac32 - \frac{5\alpha}{4}$, using a modified energy approach combined with $X^{s,b}$ spaces, but avoiding the use of a gauge transform.

\

In the present article, our objective is to prove the following well-posedness theorem:

\begin{theorem}\label{t:lwp}
The generalized Benjamin-Ono equation \eqref{BO} with $\alpha \in [1, 2]$ is locally well-posed in $H^s$ with $s > \frac34(1 - \alpha)$.
\end{theorem}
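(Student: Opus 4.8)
The plan is to establish a priori bounds and well-posedness by constructing a multi-part transformation that tames the quasilinear low-high interaction, then running a variable-coefficient Strichartz analysis to close the estimates at the stated regularity $s > \frac34(1 - \alpha)$. The starting point is the observation that the worst interaction in $\frac12 \D_x(\phi^2)$ is the paraproduct $T_{\phi_{low}} \D_x \phi_{high}$, which transports high frequencies along a flow whose coefficients depend on the low-frequency part of $\phi$; as in the Benjamin-Ono case this must be removed before any dispersive estimate can be applied. First I would set up a Littlewood-Paley decomposition and perform a paradifferential expansion of the nonlinearity, isolating this transport term along with the balanced high-high-to-low and low-high-to-high remainders. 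The transport term is handled by a \emph{pseudodifferential gauge transform}: rather than Tao's exponential gauge (which is adapted only to $\alpha = 1$), I would conjugate by $e^{i \psi(t, x, D)}$ with $\psi$ chosen so that $\D_x \psi$ solves a transport equation cancelling the bad paraproduct at leading and subleading order, the symbol class being dictated by the order $\alpha$ of the dispersion. The residual, non-transport quadratic terms — which are now at the level of a genuine derivative nonlinearity but with no low-high obstruction — are then removed, or at least rendered perturbative, by a \emph{paradifferential normal form} transformation, exactly in the spirit of Ifrim-Tataru; one must check the usual non-resonance condition $|\xi_1|^\alpha \xi_1 + |\xi_2|^\alpha \xi_2 - |\xi_1 + \xi_2|^\alpha(\xi_1 + \xi_2) \neq 0$ away from the diagonal, which holds for $\alpha \in [1,2]$ and gives the denominator bounds needed for boundedness of the normal form on $H^s$.

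With the equation conjugated to a form $(\D_t - |D|^\alpha \D_x)u = (\text{perturbative quadratic/cubic terms})$ modulo bounded transformations, the next step is the linear and variable-coefficient \emph{Strichartz analysis}. For the constant-coefficient group $e^{t|D|^\alpha \D_x}$ one has the standard local smoothing and maximal-function/Strichartz estimates for dispersion of order $\alpha + 1 \in [2,3]$; I would then upgrade these to the variable-coefficient flow produced by the gauge conjugation using a wave-packet or FBI-transform parametrix, controlling the error by the size of the low-frequency data in $H^s$. Interpolating the $\frac{1}{2}$-derivative local smoothing gain against the loss in the maximal function estimate is what produces the specific threshold $s > \frac34(1-\alpha)$: schematically, the nonlinear interaction costs one derivative, the Strichartz/smoothing pair recovers $\tfrac14 + \tfrac14 + \ldots$ scaled by the dispersion, and balancing yields $\tfrac34(1-\alpha)$, matching the $\alpha=1$ endpoint $s>0$ and the $\alpha = 2$ endpoint $s > -\tfrac34$.

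Finally, to go from a priori estimates to well-posedness I would run the standard scheme: (i) prove the estimates for smooth solutions; (ii) prove a corresponding bound for the linearized (or difference) equation in a weaker topology $H^{s'}$ with $s' < s$, to obtain Lipschitz dependence there and hence uniqueness; (iii) construct solutions for rough data by regularization (mollifying the data, solving, and passing to the limit using the uniform bounds plus the weak-topology contraction), and upgrade weak convergence to continuity in time via a Bona-Smith argument. Frequency envelopes should be carried throughout to make the continuous-dependence and continuity-in-time arguments clean.

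The main obstacle I anticipate is the \emph{variable-coefficient Strichartz step}: after the gauge transform the principal symbol is $|\xi|^\alpha \xi$ perturbed by the low-frequency flow, and for non-integer $\alpha$ this is a genuine (non-polynomial, non-homogeneous-of-integer-degree) pseudodifferential operator, so one cannot simply borrow the KdV or Schr\"odinger parametrix — one must build the parametrix for the conjugated flow from scratch, track how the symbol regularity of $\psi$ degrades under repeated conjugation, and verify that the resulting Strichartz constants depend only on a fixed low-order norm of the data so the bootstrap closes. Controlling the interplay between the gauge transform's symbol smoothness and the bilinear/Strichartz estimates — i.e. making sure the normal form and gauge transforms are simultaneously bounded on the same space on which the Strichartz estimates are proved — is where the bulk of the technical work will lie.
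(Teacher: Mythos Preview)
Your proposal tracks the paper's approach closely: the combination of a pseudodifferential gauge, a paradifferential normal form, variable-coefficient Strichartz via an FBI/wave-packet parametrix, and a frequency-envelope well-posedness scheme is exactly the architecture used.

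There is, however, one point where your mechanism is off in a way that matters. You propose to gauge away the \emph{entire} transport paraproduct $T_{\phi_{low}}\D_x\phi_{high}$ and then prove variable-coefficient Strichartz for the flow ``produced by the gauge conjugation.'' The paper's key observation is that the pseudodifferential conjugation removing the full transport is \emph{not} bounded on $L^p$ (unlike the purely multiplicative gauge at $\alpha=1$). The fix is to split the low frequencies at the wave-packet threshold $k'=\tfrac12(1-\alpha)k$: the gauge $e^{iA}$, with symbol $(1+\alpha)^{-1}\Phi_{(k',k)}\xi|\xi|^{-\alpha}$, removes only the mid-range piece $\phi_{(k',k)}\D_x\phi_k^+$ and is then bounded on $L^p$, while the very-low-frequency piece $\phi_{\le k'}\D_x$ is \emph{left in the equation} as a genuine transport perturbation of the dispersive flow. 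It is this leftover transport---not the gauge conjugation---that forces the variable-coefficient Strichartz and bilinear analysis. In other words, the three ingredients correspond to a microlocal \emph{tripartition} of the nonlinearity rather than to successive conjugations of the same term; as literally stated, your gauge step would produce an unbounded operator.

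One smaller omission: the paper relies essentially on a \emph{bilinear} $L^2$ estimate for the transported flow, proved by a physical-space positive-commutator argument, and this (together with lateral $L^p_xL^q_t$ Strichartz for $\alpha$ near $2$) is what actually closes the cubic source estimates after renormalization. Your sketch mentions smoothing and maximal estimates but not this bilinear bound, which you would need to supply.
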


On one hand, this extends the well-posedness of Herr-Ionescu-Kenig-Koch \cite{herr2010differential} to negative Sobolev regularities. On the other, one may view this as the removal of the vanishing low frequency assumption in the result of Herr \cite{herr}.

We use a two-part transformation involving a paradifferential normal form combined with a pseudodifferential gauge transform, in the spirit of the approach used by Ifrim-Tataru as applied in the context of the classical Benjamin-Ono equation. This approach is particularly well-suited to the pseudodifferential context, because in contrast with previous attempts, it avoids the use of $X^{s, b}$ spaces. As observed by Herr-Ionescu-Kenig-Koch, the control of the pseudodifferential gauge transform in $X^{s, b}$ spaces presents a substantial challenge, even after the discretization employed there.

However, unlike in the classical Benjamin-Ono setting where the gauge transform is purely multiplicative, the pseudodifferential variant of the gauge transform is still not bounded on $L^p$ spaces. To address this, we exclude the unbounded component of the gauge transform, which corresponds to the component of the nonlinearity consisting of interactions between very low and high frequencies. The consequence of this exclusion is that, instead of using energy and dispersive estimates for a constant coefficient linear flow, we will need to conduct the linear analysis on a variably transported background, though at very low frequency. In particular, the main difficulty will be adapting the linear and bilinear Strichartz estimates to this setting of a transported background.

We may view our approach as a microlocal division of the nonlinearity of \eqref{BO} into three components, corresponding respectively to a paradifferential normal form, a pseudodifferential gauge transform, and a perturbation of the linear flow. One significant advantage of this perspective is that the treatment of each of these components is essentially independent. A second advantage, preserved from the work of Ifrim-Tataru, is that we are able to remain in the simpler Sobolev functional setting.

Our paper is organized as follows. In the next two sections, Sections~\ref{s:transport-strich} and \ref{s:bilinear}, we perform the linear analysis, proving Strichartz and bilinear estimates in the presence of a variable transport. In Section~\ref{s:nf}, we present the normal form analysis, introducing the pseudodifferential conjugation and paradifferential normal form. We also establish bounds for both transformations. Finally, we present the bootstrap and well-posedness arguments in the last three sections.

\subsection{Acknowledgements}

The first author was supported by the NSF grant DMS-2220519 and the RTG in Analysis and Partial Differential equations grant DMS-2037851.

The authors would like to thank Mihaela Ifrim and Daniel Tataru for many helpful discussions.

\section{Strichartz estimates with transport}\label{s:transport-strich}

In this section, our objective is to prove Strichartz estimates for a linear evolution equation of the form
\begin{equation}\label{general}
\begin{cases}
(i\D_t + A^w(t, x, D))u = f, \qquad &\text{in } (0, 1) \times \R, \\
u(0) = u_0, \qquad &\text{on } \R,
\end{cases}
\end{equation}
where $A^w$ denotes the self-adjoint Weyl quantization of a symbol $a(t, x, \xi)$ which has a specific form as the sum of a constant coefficient dispersive term with a variable transport. Precisely, we consider a symbol
\begin{equation}\label{transport-symbol}
a(t, x, \xi) = (b(t, x)\xi + |\xi|^m) \chi_\lambda(\xi),
\end{equation}
where $\chi_\lambda$ is the $\lambda$-supported symbol of a Littlewood-Paley dyadic partition of unity. We assume that $b$ satisfies, with $\delta = \frac{2 - m}{2}$,
\begin{equation}\label{L1-symbol-b}
\begin{aligned}
&\|\D_x^\alpha \D_t^\gamma b\|_{L^1_t([0, 1]; L^\infty)} \lesssim \lambda^{\delta(|\alpha| - 1)}, \qquad &|\alpha| \geq 1, \gamma \in \{0, 1\},
\end{aligned}
\end{equation}
and
\begin{equation}\label{Linfty-symbol-b}
\|\D_t^\gamma b\|_{L^\infty_t([0, 1]; L^\infty)} \lesssim 1, \qquad  \gamma \in \{0, 1\}.
\end{equation}

In this section, we prove the following Strichartz and lateral Strichartz estimates:
\begin{theorem}\label{t:strichartz-transport}
Let $m \in [2, 3]$, $\delta = \frac{2 - m}{2}$, and $a(t, x, \xi)$ given by \eqref{transport-symbol} satisfy \eqref{L1-symbol-b} and \eqref{Linfty-symbol-b}. Let $u$ have frequency support $\lambda$ and solve \eqref{general}. Then for $p, q$ satisfying
\begin{equation}
\frac{2}{p} + \frac{1}{q} = \frac{1}{2}, \quad 2 \leq p \leq \infty, \quad 1 \leq q \leq \infty,
\end{equation}
we have
\begin{equation}\label{direct-strich}
\|u\|_{L^p([0, 1];L^q)} \lesssim \lambda^{\frac{2\delta}{p}} (\|u\|_{L^\infty([0, 1];L^2)} + \|f\|_{L^1([0, 1];L^2)}),
\end{equation}
as well as the lateral Strichartz estimate,
\begin{equation}\label{lat-strich}
\|u\|_{L^p_x (L^q_t([0, 1]))} \lesssim \lambda^{\frac14 + \frac{1}{q}(\half - m)}(\|u\|_{L^\infty([0, 1];L^2)} + \|f\|_{L^1([0, 1];L^2)}).
\end{equation}
\end{theorem}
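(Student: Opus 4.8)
The plan is to reduce the variable-coefficient estimate \eqref{direct-strich}--\eqref{lat-strich} to the corresponding constant-coefficient estimates for the flow $e^{it|D|^m}$ (at frequency $\lambda$), by absorbing the transport term $b(t,x)\xi$ via an appropriate change of variables combined with the standard $TT^*$ and $WKB$ machinery for variable-coefficient dispersive equations. Concretely, since the symbol is localized to frequency $\sim\lambda$, I would first rescale: setting $u(t,x) = v(\lambda^{?} t, \lambda x)$ (with the time rescaling chosen so that $|\xi|^m$ becomes order one on the rescaled frequency $\sim 1$) converts \eqref{general} into a semiclassical problem with small parameter $h = \lambda^{-1}$ and a symbol $b(t,x)\xi + |\xi|^m$ on the unit frequency annulus. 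The hypotheses \eqref{L1-symbol-b}--\eqref{Linfty-symbol-b} are precisely calibrated (via $\delta = (2-m)/2$) so that in the rescaled variables the transport coefficient $b$ becomes a symbol with the regularity needed for the parametrix construction on a unit time interval; the $L^1_t$ integrability in \eqref{L1-symbol-b} is what allows one to build the phase correction by solving a Hamilton--Jacobi equation whose coefficients are only $L^1$ in time.

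The main steps would be: (1) Use the bicharacteristic flow for the Hamiltonian $b(t,x)\xi + |\xi|^m$ to construct a smooth canonical transformation that straightens out the transport part; because $b$ is low-frequency relative to $\lambda$ (the bound $\lambda^{\delta(|\alpha|-1)}$ degrades slowly in derivatives), the associated flow map stays close to the identity at the relevant scale and can be quantized as an FIO (or, more elementarily, implemented as a composition of a spatial change of variables $x \mapsto X(t,x)$ solving $\dot X = b(t,X)$ with a phase/amplitude correction). (2) After this conjugation, the equation reduces to $i\partial_t w + |D|^m w = \tilde f$ modulo acceptable errors, for which the classical frequency-$\lambda$ Strichartz estimates $\|w\|_{L^p L^q} \lesssim \lambda^{2\delta/p}(\|w_0\|_{L^2} + \|\tilde f\|_{L^1 L^2})$ and the lateral estimates of Kenig--Ponce--Vega / Ai type hold with the stated exponents — here one uses the dispersive decay of $e^{it|D|^m}$ localized to frequency $\lambda$, which after rescaling is the unit-scale van der Corput / stationary phase bound. (3) Track the errors: the conjugation introduces lower-order terms (a zeroth-order pseudodifferential factor from the amplitude, and commutator errors from the cutoff $\chi_\lambda$), which are handled by the $L^1_t L^2_x \to L^1_t L^2_x$ boundedness of the error operators together with the energy estimate $\|u\|_{L^\infty L^2} \lesssim \|u_0\|_{L^2} + \|f\|_{L^1 L^2}$ for \eqref{general}, the latter following from self-adjointness of $A^w$.

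I expect the main obstacle to be step (1)--(3) done uniformly in $\lambda$ with the sharp power losses: one must show that the parametrix construction on the \emph{full} unit time interval (not just a small time step depending on $\lambda$) is valid, which requires carefully exploiting the $L^1_t$ (rather than $L^\infty_t$) control on derivatives of $b$ in \eqref{L1-symbol-b} — a Gronwall-type argument in time rather than a fixed-time estimate. A secondary subtlety is the lateral Strichartz estimate \eqref{lat-strich}, where space and time play asymmetric roles: here the change of variables $x \mapsto X(t,x)$ mixes $t$ and $x$, so one must check that the Jacobian of the combined $(t,x)$ change of variables is bounded above and below (again using \eqref{Linfty-symbol-b} and the smallness of $\int_0^1 \|\partial_x b\|_{L^\infty}\,dt$ at the relevant dyadic scale) so that $L^p_x L^q_t$ norms are comparable before and after conjugation. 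Once the reduction to the constant-coefficient model is in place, the exponent bookkeeping $\frac{2}{p} + \frac{1}{q} = \frac12$ and the gains $\lambda^{2\delta/p}$, $\lambda^{\frac14 + \frac1q(\frac12 - m)}$ follow from the standard scaling analysis of $|D|^m$ Strichartz estimates.
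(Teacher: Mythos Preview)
Your proposed reduction has a genuine gap at step (2). The spatial change of variables $x \mapsto X(t,x)$ along the flow of $b$ does absorb the transport term into $\partial_t$, but it does \emph{not} leave the dispersive term constant-coefficient: in the new coordinates, $|D_x|^m$ becomes a pseudodifferential operator with principal symbol $|J(t,y)\eta|^m$, where $J = \partial_y X^{-1}$. The deviation $J-1$ is controlled by $\int_0^1 \|\partial_x b(t,\cdot)\|_{L^\infty}\,dt$, which by \eqref{L1-symbol-b} with $|\alpha|=1$ is only $O(1)$, not small. Hence the difference $(|J|^m - 1)|\eta|^m$ is the same size as the principal symbol on the whole unit interval, and your claim that the equation becomes $i\partial_t w + |D|^m w = \tilde f$ ``modulo acceptable errors'' fails: the error is an order-$m$ variable-coefficient operator, not a lower-order perturbation. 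Any attempt to treat it perturbatively forces you onto a short time interval where $J-1$ is small, and gluing those intervals costs exactly the derivative loss you are trying to avoid (this is the microlocal-time-scale issue flagged in the paper's remark following the theorem).

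What the paper does instead is accept that the transport is nonperturbative and build a wave-packet (FBI) parametrix for the full symbol $b(t,x)\xi + |\xi|^m$ directly. The key new ingredient, compared to the standard $S^{0,(2)}_{0,0}$ theory, is that the packets carry the \emph{exact} eikonal phase $\psi_{x,\xi}$ solving $\partial_t\psi = -\tilde a(t,y,\partial_y\psi)$ rather than its linearization $\xi^t(y-x^t)$; this is what makes the construction valid on the full unit interval despite $b$ being large. The bilipschitz and dispersive properties of the Hamilton flow (your step (1)) are indeed proved, but require a nontrivial integration-by-parts-in-$t$ trick to handle the term $\tilde b_{xx}\xi^t$, which is unbounded in the naive estimate; this is where the $\gamma=1$ cases of \eqref{L1-symbol-b}--\eqref{Linfty-symbol-b} enter. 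Once the parametrix and the resulting pointwise dispersive bounds are in hand, the $TT^*$ step proceeds as you outline. So your instinct that the problem is ``unit time interval without loss'' is correct, but the mechanism is a refined parametrix rather than a conjugation to constant coefficients.
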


Due to the variable transport $b$, we use a physical space approach, constructing a wave packet parametrix. A delicate argument is needed to obtain estimates on the unit time scale, since the transport term is nonperturbative, even at low frequency. Addressing this requires the use of an exact eikonal phase function in the packet, rather than its linearization.

\begin{remark}
The Strichartz estimates presented in the analysis of the gravity-capillary water waves \cite{capillary} apply in the context of the class $L^1S_{1, \delta}^{m, (k)}(\lambda)$ of symbols satisfying
\begin{equation}\label{L1-symbol}
\begin{aligned}
&\|\D_x^\alpha \D_\xi^\beta a\|_{L^1_t([0, 1]; L^\infty)} \lesssim \lambda^{m - |\beta| + \delta(|\alpha| - k)}, \qquad &|\alpha| \geq k.
\end{aligned}
\end{equation}
However, unless $m \leq 1$, dispersive estimates for such symbols requires the use of microlocal time scales, even for smooth symbols with large $k$. Precisely, the dispersive estimates require symbols $a$ satisfying
\[
\lambda^{2m - 2} a \in L^1S_{1, \delta}^{m, (2)}(\lambda).
\]
In our current context, we would like to avoid the use of microlocal scales, since this leads to derivative losses. This is possible due to our additional assumptions on the time derivatives of $b$, corresponding to the cases of \eqref{L1-symbol-b} and \eqref{Linfty-symbol-b} with $\gamma = 1$.

\end{remark}

\begin{remark}
Using a classical Hadamard parametrix, Alazard-Burq-Zuily \cite{alazard2011strichartz} established the standard Strichartz estimate \eqref{direct-strich} in the context of the capillary water wave equations. In this setting, the linear evolution consists of a variable transport and a dispersive term of order $m = 3/2$.
\end{remark}

\subsection{The Hamilton flow}

We consider a rescaling of $a$ and $b$,
\begin{equation}\label{transport-symbol-scaled}
\tilde a(t, x, \xi) = \tau a(\tau t, \mu x, \mu^{-1} \xi), \qquad \tilde b = \tau \mu^{-1} b(\tau t, \mu x), \qquad \mu = \tau^{\frac12} \lambda^{-\delta},
\end{equation}
for arbitrary fixed $\tau \in [\lambda^{-m}, 1]$, and where $\delta = \frac{2 - m}{2}$ denotes the exponent of the frequency scale $\delta \xi$ of a wave packet. Note that the new frequency $\xi$ after this rescaling is 
\[
\xi \approx \mu \lambda = (\tau \lambda^m)^\half.
\]
 
We first observe some basic estimates on the rescaled symbols $\ta, \tb$. We have for $|\alpha| \geq 1$ the counterpart to \eqref{L1-symbol-b},
\begin{equation}\label{tb-est}
\|\D_x^\alpha\D_t^\gamma \tilde b\|_{L^1L^\infty} \lesssim \tau \mu^{|\alpha| - 1} \lambda^{\delta(|\alpha| - 1)} = \tau^{\frac{|\alpha| + 1}{2} + |\gamma|}.
\end{equation}
Using this to establish estimates for $\ta$, we have for $|\alpha| \geq 1$ and $|\beta| \geq 1$,
\begin{equation}\label{mixed}
\|\D_x^\alpha\D^\beta_\xi \tilde a\|_{L^1L^\infty} \lesssim \tau^{\frac{|\alpha| + 1}{2}}.
\end{equation}
For $|\beta| \geq 2$, we have
\begin{equation}\label{xixi}
\begin{aligned}
\|\D_\xi^\beta \tilde{a}\|_{L^1_t([0, 1]; L^\infty)} &\lesssim \mu^{- |\beta|} \tau\|\D_\xi^\beta a\|_{L^\infty_t([0, \tau]; L^\infty)} \\
&\lesssim \mu^{- |\beta|} \tau \lambda^{m - |\beta|} \\
&= (\tau^{-1}\lambda^{-m})^{\frac12 (|\beta| - 2)} \lesssim 1.
\end{aligned}
\end{equation}

\

Our objective in this subsection is to establish estimates for the Hamilton characteristics associated to $\ta$,
\begin{equation}\label{ham}
\begin{aligned}
&\dot{x}(t) = \ta_\xi(t, x(t), \xi(t)), \\
&\dot{\xi}(t) = -\ta_x(t, x(t), \xi(t)), \\
&(x, \xi)(0) = (x, \xi).
\end{aligned}
\end{equation}
We denote the solution $(x(t), \xi(t))$ to \eqref{ham} at time $t$ with initial data $(x, \xi)$ by
\[
(x^t, \xi^t) = (x^t(x, \xi), \xi^t(x, \xi)).
\]

Throughout, we will consider $\xi$ at the rescaled frequency, such that $\xi \approx \mu\lambda$. We begin by establishing the following basic estimates on the flow $(x^t, \xi^t)$.

\begin{lemma}\label{l:hamEst}
Consider $(x^t, \xi^t)$ satisfying \eqref{ham} with $\xi \approx \mu\lambda$. Then for $t \in [0, \tau^{-1}]$, 
\[
\xi^t \approx \xi \approx \mu\lambda, \qquad \|\dot{\xi}^t\|_{L^1L^\infty} \lesssim \tau \mu\lambda,
\]
and 
\[
\dot{x}^t \approx \mu\lambda, \qquad \|\ddot{x}^t\|_{L^1L^\infty} \lesssim  \tau \mu\lambda.
\]
\end{lemma}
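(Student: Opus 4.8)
\emph{Proof proposal.}
The natural approach is a continuity (bootstrap) argument on the full rescaled interval $[0,\tau^{-1}]$, organized around the observation that all of the $x$-dependence of $\ta$ sits in the transport coefficient $\tb$ --- which is small by \eqref{tb-est} --- while $\ta_\xi$ is governed by the constant-coefficient dispersive term, of the fixed size $\mu\lambda$ on the shell $\xi\approx\mu\lambda$. Concretely, from $\ta(t,x,\xi)=(\tb(t,x)\xi+\tau\mu^{-m}|\xi|^m)\chi_\lambda(\mu^{-1}\xi)$ one computes (writing $\xi>0$, the case $\xi<0$ being symmetric)
\begin{align*}
\ta_x(t,x,\xi)&=\tb_x(t,x)\,\xi\,\chi_\lambda(\mu^{-1}\xi),\\
\ta_\xi(t,x,\xi)&=\bigl(\tb(t,x)+m\tau\mu^{-m}\xi^{m-1}\bigr)\chi_\lambda(\mu^{-1}\xi)+\bigl(\tb(t,x)\xi+\tau\mu^{-m}\xi^{m}\bigr)\mu^{-1}\chi_\lambda'(\mu^{-1}\xi).
\end{align*}
On $\supp\chi_\lambda(\mu^{-1}\cdot)$ one has $m\tau\mu^{-m}\xi^{m-1}\approx\tau\mu^{-1}\lambda^{m-1}=\mu\lambda$ by the relations $\delta=\frac{2-m}{2}$ and $\mu=\tau^{1/2}\lambda^{-\delta}$; the transport piece $\tb=O(\tau\mu^{-1})\ll\mu\lambda$ (since $\mu^{2}\lambda=\tau\lambda^{m-1}$ and $\lambda^{m-1}\gg1$), and the $\chi_\lambda'$ terms vanish on the core of the shell. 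Below, all $\|\cdot\|_{L^{1}L^{\infty}}$ are taken in $t$ over the relevant interval and $L^\infty$ in the phase-space variables, with $\xi$ restricted to the shell.

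For the frequency, let $[0,T]\subseteq[0,\tau^{-1}]$ be the maximal interval on which $\xi^t\in[\tfrac12\mu\lambda,2\mu\lambda]$, with the datum in the interior. On $[0,T]$ the identity $\dot\xi^t=-\ta_x(t,x^t,\xi^t)$ gives $|\dot\xi^t|\lesssim|\tb_x(t,x^t)|\,\mu\lambda$ pointwise, and integrating in time using \eqref{tb-est} with $|\alpha|=1$, $\gamma=0$ (so $\|\tb_x\|_{L^1L^\infty}\lesssim\tau$) yields $\sup_{[0,T]}|\xi^t-\xi|\lesssim\tau\mu\lambda\ll\mu\lambda$. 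With the bootstrap constant absolute, this forces $T=\tau^{-1}$ once $\tau$ is below a small absolute threshold; the remaining range $\tau\gtrsim1$ corresponds to a bounded rescaled time interval and is handled directly by Gronwall. This gives $\xi^t\approx\xi\approx\mu\lambda$ on all of $[0,\tau^{-1}]$ and, by the same chain of inequalities, $\|\dot\xi^t\|_{L^1L^\infty}\lesssim\tau\mu\lambda$.

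With $\xi^t\approx\mu\lambda$ in hand, $\dot x^t=\ta_\xi(t,x^t,\xi^t)$ together with the formula above gives $\dot x^t\approx\mu\lambda$, the dispersive term dominating with a definite sign on the core of the shell while the remaining terms are lower order. Differentiating once more along the flow,
\[
\ddot x^t=(\D_t\ta_\xi)(t,x^t,\xi^t)+(\D_x\ta_\xi)(t,x^t,\xi^t)\,\dot x^t+(\D_\xi\ta_\xi)(t,x^t,\xi^t)\,\dot\xi^t,
\]
and one bounds the three pieces in $L^1L^\infty$ via the symbol estimates: $\D_x\ta_\xi=O(|\tb_x|)$ on the shell times $\dot x^t=O(\mu\lambda)$ with \eqref{tb-est} gives $O(\tau\mu\lambda)$; $\D_\xi\ta_\xi=\ta_{\xi\xi}=O(1)$ by \eqref{xixi} times $\|\dot\xi^t\|_{L^1L^\infty}\lesssim\tau\mu\lambda$ gives $O(\tau\mu\lambda)$; and $\D_t\ta_\xi=O(|\tb_t|)$ on the shell, estimated with the $\gamma=1$ cases of \eqref{Linfty-symbol-b} and \eqref{tb-est} --- this is precisely where the hypotheses on $\D_t b$ enter, keeping the estimate off microlocal time scales in the spirit of the first remark.

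The step I expect to be the main obstacle is that the argument must close on the \emph{entire} rescaled unit interval $[0,\tau^{-1}]$ rather than on a microlocal sub-scale: the transport term is nonperturbative in that it alters the leading-order behavior of $\dot x^t$, and the wave-packet parametrix built later on this flow is only useful if the flow is genuinely controlled across the whole interval. Extracting the clean gains --- one power of $\tau$ better for $\dot\xi^t$ and $\ddot x^t$ than for $\xi^t$ and $\dot x^t$ --- therefore has to be drawn from the $L^1$-in-time (rather than $L^\infty$-in-time) bounds on the derivatives of $b$, and care is needed to confirm that the frequency shell is exactly, not merely approximately, preserved over such a long time.
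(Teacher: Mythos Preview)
Your proposal is correct and follows essentially the same approach as the paper. The only cosmetic difference is that the paper obtains $\xi^t\approx\xi$ by noting $\dot\xi^t=-\tb_x(t,x^t)\,\xi^t$ and invoking Gronwall directly with $\|\tb_x\|_{L^1L^\infty}\lesssim\tau$ (which handles all $\tau\in[\lambda^{-m},1]$ at once), whereas you phrase the same computation as a bootstrap with a separate remark for $\tau\approx 1$; your treatment of $\dot x^t$ and $\ddot x^t$ matches the paper's line by line.
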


\begin{proof}

For the estimates on $\xi^t$, we have
\[
\dot{\xi}^t = -\ta_x(t, x^t, \xi^t) = -\tb_x(t, x^t) \xi^t
\]
and apply Gronwall's inequality with \eqref{tb-est}.

\

For the estimates on $x^t$, we write
\[
\dot{x}^t = \ta_\xi(t, x^t, \xi^t) = \tb(t, x^t) + \tau\mu^{- m}m|\xi^t|^{m - 1} \approx O(\tau \mu^{-1}) + \tau \mu^{-m}(\mu \lambda)^{m - 1} \approx \mu \lambda.
\]
Further, we have using \eqref{Linfty-symbol-b}, \eqref{mixed}, and \eqref{xixi},
\[
\|\ddot{x}^t\|_{L^1L^\infty} = \|\tb_t(t, x^t) + \tilde a_{x\xi} \dot{x}^t + \ta_{\xi\xi} \dot{\xi}^t\|_{L^1L^\infty} \lesssim  \tau \mu\lambda.
\]

\end{proof}

Next, we show that the flow \eqref{ham} is bilipschitz, which will be central to the coherence of the wave packet parametrix construction.
\begin{proposition}\label{p:bilip}
Consider $(x^t, \xi^t)$ satisfying \eqref{ham} with $\xi \approx \mu\lambda$. 

\begin{enumerate}[a)]

\item We have
\[
|\D_x x^t| + |\D_x \xi^t| \lesssim 1.
\]
\item Further, for $t \ll \tau^{-1}$,
\[
 |\D_x x^t - 1| \leq \half.
 \]
\end{enumerate}

\end{proposition}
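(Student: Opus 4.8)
The plan is to differentiate the Hamilton system \eqref{ham} in the initial data $x$ and treat the resulting linearized ODE system for $(\D_x x^t, \D_x \xi^t)$ by Gronwall's inequality, using the symbol bounds \eqref{tb-est}, \eqref{mixed}, \eqref{xixi} established above. Write $y^t := \D_x x^t$ and $\eta^t := \D_x \xi^t$, with $(y^0, \eta^0) = (1, 0)$. Differentiating \eqref{ham} gives the variational system
\[
\dot y^t = \ta_{\xi x}(t, x^t, \xi^t) y^t + \ta_{\xi\xi}(t, x^t, \xi^t)\eta^t, \qquad
\dot \eta^t = -\ta_{xx}(t, x^t, \xi^t) y^t - \ta_{x\xi}(t, x^t, \xi^t)\eta^t.
\]
Here $\ta_{\xi\xi} = \tau\mu^{-m} m(m-1)|\xi^t|^{m-2}$, which by $\xi^t \approx \mu\lambda$ and \eqref{xixi} is $O(1)$ (indeed $O((\tau\lambda^m)^{-1})$ up to constants), while $\ta_{x\xi} = \tb_x$, $\ta_{xx} = \tb_{xx}\,\xi^t$, and $\ta_{\xi x} = \tb_x$ are controlled in $L^1_tL^\infty$ by \eqref{tb-est}: $\|\tb_x\|_{L^1L^\infty}\lesssim \tau$ and $\|\tb_{xx}\,\xi^t\|_{L^1L^\infty}\lesssim \tau\mu\lambda \cdot \mu^{-1}\lambda^{-\delta}\cdot\text{(from \eqref{tb-est})}$; the key point is that along the flow each coefficient has $L^1_t([0,\tau^{-1}])L^\infty$ norm $\lesssim \tau \cdot \tau^{-1} = 1$, i.e.\ $O(1)$ total mass over the time interval $[0,\tau^{-1}]$ (this is exactly the scaling bookkeeping already done in Lemma \ref{l:hamEst}).

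For part (a), set $Z^t := |y^t| + |\eta^t|$. From the variational system, $\frac{d}{dt} Z^t \le c(t) Z^t$ where $c(t) = |\ta_{\xi x}| + |\ta_{\xi\xi}| + |\ta_{xx}| + |\ta_{x\xi}|$ evaluated along the flow, and $\|c\|_{L^1_t([0,\tau^{-1}])}\lesssim 1$ by the estimates above. Gronwall then yields $Z^t \lesssim Z^0 = 1$ for all $t\in[0,\tau^{-1}]$, which is the claim (after undoing the rescaling, but the statement is phrased at the rescaled frequency so no further work is needed). For part (b), note $y^0 = 1$ and
\[
y^t - 1 = \int_0^t \bigl( \ta_{\xi x}(s, x^s, \xi^s) y^s + \ta_{\xi\xi}(s,x^s,\xi^s)\eta^s \bigr)\, ds,
\]
so $|y^t - 1| \le \bigl(\|\ta_{\xi x}\|_{L^1_t([0,t])L^\infty} + \|\ta_{\xi\xi}\|_{L^1_t([0,t])L^\infty}\bigr)\sup_{s\le t} Z^s \lesssim \tau \cdot t \cdot 1$ using part (a) to bound $Z^s \lesssim 1$. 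Since $t \ll \tau^{-1}$, the right side is $\le \tfrac12$ once the implicit constant times $\tau t$ is small enough, giving $|\D_x x^t - 1|\le \tfrac12$.

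The main obstacle is purely the bookkeeping: verifying that every second-derivative symbol coefficient, when composed with the flow and integrated over the full time interval $[0,\tau^{-1}]$, has mass $O(1)$ uniformly in the parameters $\tau\in[\lambda^{-m},1]$ and $\lambda$. The delicate coefficient is $\ta_{xx} = \tb_{xx}\,\xi^t$, since it carries the extra frequency factor $\xi^t \approx \mu\lambda$; one must check that \eqref{tb-est} with $|\alpha| = 2$ gives $\|\tb_{xx}\|_{L^1L^\infty}\lesssim \tau^{3/2}$, so $\|\tb_{xx}\,\xi^t\|_{L^1L^\infty}\lesssim \tau^{3/2}\mu\lambda = \tau^{3/2}\cdot\tau^{1/2}\lambda^{1-\delta}\lambda = \tau^2\lambda^{2-\delta}$—wait, this must be reconciled against the $L^1$ over $[0,\tau^{-1}]$ versus $[0,1]$ discrepancy, and here one uses that the time interval has been rescaled so that \eqref{tb-est} is stated on the unit interval, while we run the flow only up to $\tau^{-1}\le 1$; hence the relevant mass is at most that on $[0,1]$, and the $\tau^{(|\alpha|+1)/2 + |\gamma|}$ powers in \eqref{tb-est} are designed precisely so that, after multiplying by $\xi^t$ where needed, one lands at $O(1)$. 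I would present this reconciliation carefully as the crux, and treat the Gronwall argument itself as routine.
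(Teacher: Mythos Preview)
Your instinct to flag the coefficient $\ta_{xx} = \tb_{xx}\,\xi^t$ as the delicate one is exactly right, but your attempted reconciliation does not go through, and this is a genuine gap rather than bookkeeping. From \eqref{tb-est} one has $\|\tb_{xx}\|_{L^1L^\infty} \lesssim \tau^{3/2}$ (or $\tau^{1/2}$, depending on how you normalize the time interval), while $\xi^t \approx \mu\lambda = (\tau\lambda^m)^{1/2}$. Multiplying gives at best $\tau\lambda^{m/2}$, which at $\tau = 1$ is $\lambda^{m/2} \gg 1$ for $m \in [2,3]$. So the $L^1_t$ mass of this coefficient is \emph{not} $O(1)$, and a direct Gronwall on $Z^t = |y^t| + |\eta^t|$ fails. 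The paper says this explicitly: ``the second term on the right hand side likewise has bounded coefficient, but the first term does not.''

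The paper's fix is an integration by parts in time that trades the bad spatial derivative for a time derivative. Writing $\tb_{xx}(t,x^t)\dot x^t = \frac{d}{dt}\tb_x(t,x^t) - \tb_{tx}(t,x^t)$ and using $\xi^t/\dot x^t \approx 1$ (both quantities are $\approx \mu\lambda$ by Lemma~\ref{l:hamEst}), one rewrites
\[
\tb_{xx}(t,x^t)\,\xi^t\,\D_x x^t = \frac{d}{dt}\Bigl(\tb_x(t,x^t)\,\frac{\xi^t}{\dot x^t}\,\D_x x^t\Bigr) - (\text{terms with bounded }L^1\text{ coefficients}).
\]
The total derivative contributes only boundary terms controlled by $\|\tb_x\|_{L^\infty} \cdot O(1) \cdot Z^t$, and the remaining terms now involve $\tb_x$, $\tb_{tx}$, $\dot\xi^t$, $\ddot x^t$, all of which have bounded $L^1$ mass by \eqref{tb-est} and Lemma~\ref{l:hamEst}. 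This is precisely where the hypothesis \eqref{L1-symbol-b} on the \emph{time} derivative $\tb_{tx}$ is used; without it the argument would not close. After this manipulation, Gronwall applies and gives part (a); part (b) then follows as you wrote, by integrating the first variational equation and using the now-established bound $Z^s \lesssim 1$.
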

\begin{proof}

 Differentiating \eqref{ham}, we have
\begin{equation*}
\begin{aligned}
\frac{d}{dt}\D_x x^t &= \ta_{\xi x}(t, x^t, \xi^t) \D_x x^t + \ta_{\xi \xi}(t, x^t, \xi^t) \D_x \xi^t, \\
\frac{d}{dt}\D_x \xi^t &= -\ta_{xx}(t, x^t, \xi^t) \D_x x^t - \ta_{x\xi}(t, x^t, \xi^t) \D_x \xi^t.
\end{aligned}
\end{equation*}
Substituting the form of $\tilde a$, we obtain
\begin{equation}\label{linHam}
\begin{aligned}
\frac{d}{dt}\D_x x^t &= \tb_x(t, x^t) \D_x x^t + \tau \mu^{-m}m(m - 1) |\xi^t|^{m - 2}\D_x \xi^t, \\
\frac{d}{dt}\D_x \xi^t &= - \tb_{xx}(t, x^t) \xi^t \D_x x^t - \tb_x(t, x^t) \D_x \xi^t.
\end{aligned}
\end{equation}
The coefficients on the right hand side of the first equation are bounded by \eqref{mixed} and \eqref{xixi}. In the second equation for $\D_x\xi^t$, the second term on the right hand side likewise has bounded coefficient, but the first term does not. To control this term, we integrate $\tb$. Observing that
\[ \frac{d}{dt} \tb_x(t, x^t) = \tb_{tx}(t, x^t) + \tb_{xx}(t, x^t) \dot{x}^t, \]
we may rewrite this first term as
\begin{equation*}
\begin{aligned}
\tb_{xx}(t, x^t)\xi^t \D_x x^t &= \tb_{xx}(t, x^t)\dot{x}^t \cdot \frac{\xi^t}{\dot{x}^t} \D_x x^t \\
&= \frac{d}{dt}\left( \tb_x(t, x^t) \frac{\xi^t}{\dot{x}^t} \D_x x^t\right)  \\
&\quad - \tb_x(t, x^t)\frac{d}{dt}\left(\frac{\xi^t}{\dot{x}^t}\right)\D_x x^t - \tb_x(t, x^t)\frac{\xi^t}{\dot{x}^t}\frac{d}{dt}\D_x x^t - \tb_{tx}(t, x^t)\frac{\xi^t}{\dot{x}^t} \D_x x^t \\
&= \frac{d}{dt}\left( \tb_x(t, x^t) \frac{\xi^t}{\dot{x}^t} \D_x x^t\right)  \\
&\quad - \tb_x(t, x^t)\left(\frac{\dot{\xi}^t}{\dot{x}^t} - \frac{\xi^t}{(\dot{x}^t)^2}\ddot{x}^t\right)\D_x x^t- \tb_{tx}(t, x^t)\frac{\xi^t}{\dot{x}^t} \D_x x^t  \\
&\quad - \tb_x(t, x^t)\frac{\xi^t}{\dot{x}^t}\left(\tb_x(t, x^t) \D_x x^t + \tau m(m - 1) \mu^{-m}|\xi^t|^{m - 2}\D_x \xi^t \right).
\end{aligned}
\end{equation*}
The coefficients on the terms of the second and third rows of the right hand side are bounded by \eqref{tb-est}. We conclude by an application of Gronwall's inequality that
\[
|\D_x x^t| + |\D_x \xi^t| \lesssim 1.
\]
In turn, substituting this into \eqref{linHam}, we obtain
\[
|\D_x x^t - 1| \leq \half
\]
for $t \ll \tau^{-1}$.

\end{proof}

We similarly prove that the flow satisfies a dispersive property:

\begin{proposition}\label{p:disp}
Consider $(x^t, \xi^t)$ satisfying \eqref{ham} with $\xi \approx \lambda$. 

\begin{enumerate}[a)]
\item We have
\[
|\D_\xi x^t| + |\D_\xi \xi^t| \lesssim 1.
\]
\item For $t \ll \tau^{-1}$,
\[
 |\D_\xi \xi^t - 1| \leq \half.
 \]
\item For $t \ll \tau^{-1}$,
\[\D_\xi x^t \approx t.\]
\end{enumerate}

\end{proposition}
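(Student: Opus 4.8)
The plan is to mirror the structure of the proof of Proposition~\ref{p:bilip}, differentiating the Hamilton system \eqref{ham} in $\xi$ rather than in $x$, and then running a Gronwall argument followed by a second bootstrap to sharpen the leading-order behavior. Concretely, I would first record the linearized equations in the $\xi$-variable: differentiating \eqref{ham} and substituting the form \eqref{transport-symbol-scaled} of $\tilde a$ gives
\begin{equation*}
\begin{aligned}
\frac{d}{dt}\D_\xi x^t &= \tb_x(t, x^t)\D_\xi x^t + \tau\mu^{-m}m(m-1)|\xi^t|^{m-2}\D_\xi \xi^t, \\
\frac{d}{dt}\D_\xi \xi^t &= -\tb_{xx}(t, x^t)\xi^t \D_\xi x^t - \tb_x(t, x^t)\D_\xi \xi^t,
\end{aligned}
\end{equation*}
with initial data $\D_\xi x^0 = 0$, $\D_\xi \xi^0 = 1$. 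This is exactly the same linear system as \eqref{linHam}, only with different initial data, so the estimates on the coefficients are already in place: by \eqref{mixed} and \eqref{xixi} the coefficients in the first equation and the $\tb_x$ coefficient in the second are bounded in $L^1_t L^\infty$, while the dangerous coefficient $\tb_{xx}(t,x^t)\xi^t$ in the second equation is handled by precisely the same integration-by-parts identity written in the proof of Proposition~\ref{p:bilip} — one rewrites $\tb_{xx}(t,x^t)\xi^t \D_\xi x^t$ as a total time derivative of $\tb_x(t,x^t)\frac{\xi^t}{\dot x^t}\D_\xi x^t$ plus remainder terms with $L^1_t L^\infty$ coefficients, using Lemma~\ref{l:hamEst} to control $\xi^t/\dot x^t \approx 1$ and $\ddot x^t$. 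Gronwall's inequality then yields part (a): $|\D_\xi x^t| + |\D_\xi \xi^t| \lesssim 1$ on $t \in [0, \tau^{-1}]$.

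For part (b), I would feed the bound from (a) back into the second linearized equation. Since $\D_\xi \xi^0 = 1$ and
\[
\D_\xi \xi^t - 1 = -\int_0^t \big(\tb_{xx}(s,x^s)\xi^s \D_\xi x^s + \tb_x(s,x^s)\D_\xi \xi^s\big)\, ds,
\]
the $\tb_x$ term contributes $O(\|\tb_x\|_{L^1 L^\infty}) = O(\tau)$ by \eqref{tb-est}, and the first term is estimated by the same integration-by-parts device: the boundary term is $\tb_x(t,x^t)\frac{\xi^t}{\dot x^t}\D_\xi x^t$, which is $O(\|\tb_x\|_{L^\infty_t L^\infty})$ — and here I should note that \eqref{tb-est} with $|\alpha|=1$ gives only an $L^1_t$ bound $\lesssim \tau$, so to get a genuinely small boundary term I instead use the bound coming directly from \eqref{transport-symbol-scaled}, namely $\|\tb_x\|_{L^\infty L^\infty} = \tau\|b_x\|_{L^\infty L^\infty} \lesssim \tau$ by \eqref{L1-symbol-b}–\eqref{Linfty-symbol-b} applied with $|\alpha| = 1$ giving a pointwise-in-time bound of size $1$ for $b_x$, rescaled by $\tau\mu^{-1}\cdot\mu$. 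Collecting, $|\D_\xi \xi^t - 1| \lesssim \tau t + \tau \ll \half$ once $t \ll \tau^{-1}$.

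For part (c), I would integrate the first linearized equation from $\D_\xi x^0 = 0$:
\[
\D_\xi x^t = \int_0^t \Big(\tb_x(s,x^s)\D_\xi x^s + \tau\mu^{-m}m(m-1)|\xi^s|^{m-2}\D_\xi \xi^s\Big)\, ds.
\]
By Lemma~\ref{l:hamEst}, $\xi^s \approx \mu\lambda$, so $\tau\mu^{-m}|\xi^s|^{m-2} \approx \tau\mu^{-m}(\mu\lambda)^{m-2} = \tau\mu^{-2}\lambda^{m-2}$; recalling $\mu = \tau^{1/2}\lambda^{-\delta}$ with $\delta = \frac{2-m}{2}$, this equals $\tau\cdot\tau^{-1}\lambda^{2\delta}\lambda^{m-2} = \lambda^{2\delta + m - 2} = 1$. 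Thus the integrand's main term is $m(m-1)|\xi^s|^{m-2}\tau\mu^{-m} \approx 1$ up to the factor $\D_\xi \xi^s$, which by part (b) is within $\frac12$ of $1$ for $t \ll \tau^{-1}$, while the $\tb_x \D_\xi x^s$ contribution is $O(\tau t) \ll 1$. Hence $\D_\xi x^t \approx t$. The main obstacle is, as in Proposition~\ref{p:bilip}, the nonperturbative coefficient $\tb_{xx}\xi^t$ in the $\D_\xi\xi^t$ equation; but since the integration-by-parts identity there is written for a general solution of \eqref{linHam} and depends only on the structure of the equations and Lemma~\ref{l:hamEst}, not on the initial data, it transfers verbatim, and the only genuinely new bookkeeping is tracking the constant $\tau\mu^{-m}m(m-1)|\xi^t|^{m-2} \approx 1$ to extract the sharp $\D_\xi x^t \approx t$ asymptotic in part (c).
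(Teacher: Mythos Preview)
Your approach matches the paper's exactly: the paper also defers parts (a) and (b) to ``the same way as Proposition~\ref{p:bilip}'' (the linearized system is the same, only the initial data differ), and for part (c) the paper likewise integrates the first linearized equation, uses Lemma~\ref{l:hamEst} and part (b) to see that $\tau\mu^{-m}m(m-1)|\xi^t|^{m-2}\D_\xi\xi^t \approx 1$, and absorbs the $\tb_x\D_\xi x^t$ term using \eqref{tb-est} and part (a). Your computation $\tau\mu^{-m}(\mu\lambda)^{m-2}=1$ is correct and is precisely the paper's check.

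One small point in your discussion of (b): you assert that \eqref{L1-symbol-b}--\eqref{Linfty-symbol-b} with $|\alpha|=1$ give a pointwise-in-time bound $\|b_x\|_{L^\infty_tL^\infty}\lesssim 1$, but they do not --- \eqref{L1-symbol-b} only gives $\|b_x\|_{L^1_tL^\infty}\lesssim 1$, and \eqref{Linfty-symbol-b} concerns $b$ itself, not $b_x$. This boundary-term bookkeeping is something the paper's own proof of Proposition~\ref{p:bilip} also glosses over (it simply says ``substituting this into \eqref{linHam}''), so you are not diverging from the paper here, but your attempted justification of that step is not quite right as written. The cleanest route is to work with the modified variable $Z^t=\D_\xi\xi^t+\tb_x\frac{\xi^t}{\dot x^t}\D_\xi x^t$ throughout the Gronwall, noting that for the $\xi$-linearization $Z^0=\D_\xi\xi^0=1$ since $\D_\xi x^0=0$, so no pointwise evaluation of $\tb_x$ is needed at the initial boundary.
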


\begin{proof}

The estimates of $a)$ and $b)$ are proven in the same way as the corresponding estimates of Proposition~\ref{p:bilip}. 

\

For $c)$, we begin with the counterpart to the first equation of \eqref{linHam},
\begin{equation*}
\begin{aligned}
\frac{d}{dt}\D_\xi x^t &= \tb_x(t, x^t) \D_\xi x^t + \tau \mu^{-m} m(m - 1)|\xi^t|^{m - 2}\D_\xi \xi^t.
\end{aligned}
\end{equation*}
Consider the second term on the right hand side. We have by $b)$, and the estimate on $\xi^t$ of Lemma~\ref{l:hamEst},
\[
 \tau\mu^{-m} |\xi^t|^{m - 2}\D_\xi \xi^t \approx \tau\mu^{-m}(\mu\lambda)^{m - 2} = 1.
\]
Using this with part $a)$ and the estimates on $\tilde b_x$ of \eqref{tb-est}, we obtain $c)$.
\end{proof}

\subsection{The eikonal equation}

The Hamilton flow \eqref{ham} forms the characteristics of solutions to the eikonal equation,
\begin{equation}\label{eik}
\D_t \psi_{x, \xi}(t, y) = -\ta(t, y, \D_y \psi_{x, \xi}(t, y)), \qquad \psi_{x, \xi}(0, y) = \xi(y - x),
\end{equation}
which will serve as the phase of our wave packets. In preparation, we use the regularity of the characteristics to establish estimates on the eikonal solutions.

\begin{lemma}\label{l:eikonal}
Consider a solution $\psi_{x, \xi}$ to \eqref{eik} with $\xi \approx \lambda$. We have
\[
\D_y\psi_{x, \xi} \approx \xi, \qquad |\D_y^2 \psi_{x, \xi}| \lesssim 1.
\]
\end{lemma}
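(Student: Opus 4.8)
The plan is to translate the statements about $\D_y \psi_{x,\xi}$ and $\D_y^2 \psi_{x,\xi}$ into statements about the Hamilton flow, for which we already have good control from Lemma~\ref{l:hamEst} and Propositions~\ref{p:bilip}, \ref{p:disp}. The key observation is the standard fact from the method of characteristics: if $(x^t, \xi^t)$ solves \eqref{ham}, then along the characteristic curve $y = x^t(x,\xi)$ one has
\[
\D_y \psi_{x,\xi}(t, x^t(x,\xi)) = \xi^t(x,\xi).
\]
Indeed, differentiating \eqref{eik} in $y$ shows that $t \mapsto \D_y\psi_{x,\xi}(t, x^t)$ and $t \mapsto \xi^t$ satisfy the same ODE with the same initial data $\xi$ at $t = 0$ (this is precisely the second Hamilton equation $\dot\xi^t = -\ta_x(t, x^t, \xi^t)$). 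So the gradient of the phase, evaluated along the characteristic through $(x,\xi)$, is exactly the momentum variable $\xi^t$.

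Given this, the first estimate $\D_y \psi_{x,\xi} \approx \xi$ is immediate from Lemma~\ref{l:hamEst}, which gives $\xi^t \approx \xi \approx \mu\lambda$ for $t$ in the relevant range, once we note that the characteristics foliate the $(t,y)$-strip: by Proposition~\ref{p:bilip}(a)--(b), the map $x \mapsto x^t(x,\xi)$ is a bilipschitz homeomorphism of $\R$ (for each fixed $t$ and $\xi$), so every $y$ lies on exactly one characteristic, and $\D_y\psi_{x,\xi}(t,y)$ equals $\xi^t$ at the corresponding initial point. For the second estimate, I differentiate the identity $\D_y\psi_{x,\xi}(t, x^t) = \xi^t$ once more in the initial position $x$ (holding $\xi$ fixed): the chain rule yields
\[
\D_y^2 \psi_{x,\xi}(t, x^t) \cdot \D_x x^t = \D_x \xi^t,
\]
so that $\D_y^2 \psi_{x,\xi} = (\D_x \xi^t)/(\D_x x^t)$. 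By Proposition~\ref{p:bilip}(a) the numerator is $O(1)$, and by Proposition~\ref{p:bilip}(b) the denominator is bounded below (it lies within $\frac12$ of $1$) for $t \ll \tau^{-1}$; hence $|\D_y^2 \psi_{x,\xi}| \lesssim 1$ on that time interval.

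The main obstacle is the bookkeeping around which time interval and which frequency normalization is in force: Lemma~\ref{l:hamEst} is stated for $t \in [0,\tau^{-1}]$ while the bilipschitz lower bound of Proposition~\ref{p:bilip}(b) requires $t \ll \tau^{-1}$, so one should be slightly careful to state the lemma (and its proof) on the shorter interval, or else rescale back. Also, the lemma as written uses $\xi \approx \lambda$ whereas the flow estimates are phrased at the rescaled frequency $\xi \approx \mu\lambda$; the cleanest route is to prove everything in the rescaled variables, where \eqref{eik} becomes the eikonal equation for $\ta$, and then undo the scaling \eqref{transport-symbol-scaled} at the very end — the scaling is linear in $(t,x,\xi)$, so it only affects the estimates by the explicit powers of $\tau$ and $\mu$, and the qualitative bounds $\D_y\psi \approx \xi$, $|\D_y^2\psi| \lesssim 1$ are scale-invariant in form. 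No genuinely hard analysis is needed beyond what Lemma~\ref{l:hamEst} and Propositions~\ref{p:bilip}--\ref{p:disp} already supply; the content is entirely in the characteristic identity and the two differentiations of it.
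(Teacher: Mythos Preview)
Your proposal is correct and follows essentially the same approach as the paper: both use the characteristic identity $\D_y\psi_{x,\xi}(t,x^t)=\xi^t$ together with Lemma~\ref{l:hamEst} for the first estimate, and then differentiate in $x$ and invoke Proposition~\ref{p:bilip} for the second. Your version is in fact slightly more explicit (writing out the quotient $\D_y^2\psi = (\D_x\xi^t)/(\D_x x^t)$ and noting the foliation by characteristics), and your remarks about the $\xi\approx\lambda$ versus $\xi\approx\mu\lambda$ normalization and the time interval are reasonable caveats but do not affect the substance of the argument.
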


\begin{proof}
We have
\[ \D_y\psi_{x, \xi}(t, x^t(x, \xi)) = \xi^t(x, \xi) \]
so that using Lemma~\ref{l:hamEst}, 
\[
\D_y\psi_{x, \xi} \approx \xi.
\]
Then using Proposition~\ref{p:bilip},
\[
|\D_y^2 \psi_{x, \xi}| \approx |\D_x \xi^t| \lesssim 1.
\]
\end{proof}

Next, we establish the higher regularity of $\psi_{x, \xi}$:

\begin{proposition}\label{p:eikonaly}
We have $|\D^\alpha_y \psi_{x, \xi}| \lesssim 1$ for $|\alpha| \geq 2$.
\end{proposition}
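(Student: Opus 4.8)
The plan is to differentiate the characteristic identity $\D_y\psi_{x,\xi}(t, x^t(x,\xi)) = \xi^t(x,\xi)$ repeatedly in the initial spatial variable $x$, and then to unwind these relations to recover bounds on the $y$-derivatives of $\psi_{x,\xi}$ evaluated along the flow. Since Proposition~\ref{p:bilip} gives $\D_x x^t \approx 1$ (so the map $x \mapsto x^t$ is a bilipschitz change of variables with Jacobian bounded above and below), controlling $\D_x^k \big(\D_y\psi_{x,\xi}(t, x^t)\big)$ for $k \geq 1$ is equivalent, via the Faà di Bruno chain rule, to controlling $\D_y^{k+1}\psi_{x,\xi}$ at the point $x^t$ together with lower-order $y$-derivatives already bounded by Lemma~\ref{l:eikonal} and induction. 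On the other side of the identity, $\D_x^k \xi^t$ must be bounded; this is the higher-order analogue of the bilipschitz bound of Proposition~\ref{p:bilip}, and I would establish it first.

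\textbf{Step 1: higher-order variational bounds for the flow.} I would show that $|\D_x^k x^t| + |\D_x^k \xi^t| \lesssim 1$ for all $k \geq 1$, uniformly for $t \in [0, \tau^{-1}]$. This proceeds by induction on $k$: differentiating the Hamilton system \eqref{ham} $k$ times in $x$ produces a linear ODE system for $(\D_x^k x^t, \D_x^k \xi^t)$ whose coefficients are the second derivatives $\tilde a_{xx}, \tilde a_{x\xi}, \tilde a_{\xi\xi}$ evaluated along the flow, and whose inhomogeneity is a polynomial in the lower-order variations $\D_x^j x^t, \D_x^j \xi^t$ ($j < k$) with coefficients given by higher derivatives of $\tilde a$. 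The only dangerous term is the one carrying $\tilde b_{xx}$ (and more generally $\D_x^{j} \tilde b$ with $j \geq 2$), exactly as in the proof of Proposition~\ref{p:bilip}; I would treat it by the same integration-by-parts device, writing $\D_x^j\tilde b(t,x^t)\,\xi^t = \dfrac{d}{dt}\!\Big(\D_x^{j-1}\tilde b(t,x^t)\,\dfrac{\xi^t}{\dot x^t}\Big) + (\text{terms with bounded } L^1_t L^\infty \text{ coefficients})$, using $\dot x^t \approx \mu\lambda$ from Lemma~\ref{l:hamEst} and the estimates \eqref{tb-est}, \eqref{mixed}, \eqref{xixi}. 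Then Gronwall closes the induction.

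\textbf{Step 2: transfer to the eikonal solution.} Differentiating $\D_y\psi_{x,\xi}(t, x^t) = \xi^t$ once in $x$ gives $\D_y^2\psi_{x,\xi}(t,x^t)\,\D_x x^t = \D_x\xi^t$, recovering Lemma~\ref{l:eikonal}. Differentiating $k-1$ times more and applying the Faà di Bruno formula, one obtains
\[
\D_y^{k}\psi_{x,\xi}(t, x^t)\,(\D_x x^t)^{k-1} = \D_x^{k-1}\xi^t - (\text{lower-order terms}),
\]
where the lower-order terms involve $\D_y^j\psi_{x,\xi}(t,x^t)$ for $j < k$ (bounded by the induction hypothesis) multiplied by derivatives $\D_x^i x^t$ with $i \leq k-1$ (bounded by Step 1). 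Since $\D_x x^t \approx 1$ by Proposition~\ref{p:bilip}, we may divide by $(\D_x x^t)^{k-1}$ and conclude $|\D_y^k \psi_{x,\xi}(t, x^t)| \lesssim 1$; as $(x,\xi)$ ranges over all admissible data at the rescaled frequency, $x^t$ ranges over all of $\R$, so the bound holds at every point $y$. Rescaling back to the original variables then yields $|\D_y^\alpha\psi_{x,\xi}| \lesssim 1$ for $|\alpha| \geq 2$ in the statement's normalization.

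\textbf{Main obstacle.} The crux is Step 1, and specifically keeping track of the $\tilde b_{xx}$-type terms, which are the only ones whose $L^\infty_t$ coefficients are not uniformly bounded (they are only $L^1_t L^\infty$ with the borderline scaling in \eqref{tb-est}). The integration-by-parts trick from Proposition~\ref{p:bilip} generalizes, but one must verify that after differentiating $k$ times no term accumulates a factor that is large in $t \in [0,\tau^{-1}]$ — this is where the gain $\dot x^t \approx \mu\lambda$ paired with $\xi^t \approx \mu\lambda$ (so $\xi^t/\dot x^t \approx 1$) and the time-derivative hypotheses \eqref{L1-symbol-b}, \eqref{Linfty-symbol-b} with $\gamma = 1$ are essential. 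Everything else is bookkeeping with the chain rule.
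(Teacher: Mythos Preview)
Your approach is correct but follows a genuinely different route from the paper's. The paper differentiates the eikonal equation \eqref{eik} directly in $y$ to obtain a transport equation for $v=\D_y^2\psi_{x,\xi}$. The dangerous source term $\ta_{yy}$ (which carries the large factor $\D_y\psi\approx\mu\lambda$) is absorbed by passing to the modified variable $w=v+\tb_y\,\D_y\psi/\ta_\xi$, and the large transport velocity $\ta_\xi$ is removed by a Galilean shift; the resulting transport equation has bounded zeroth-order coefficient and forcing, so $L^\infty$ bounds follow, with higher $|\alpha|$ handled by differentiating the transport equation. Your approach instead stays on the Hamilton-flow side: you first upgrade Proposition~\ref{p:bilip} to higher-order variational bounds $|\D_{y_0}^k x^t|+|\D_{y_0}^k\xi^t|\lesssim 1$ (here $y_0$ is the initial position of the characteristic with $\psi_{x,\xi}$ held fixed), then invert the chain rule in the identity $\D_y\psi_{x,\xi}(t,x^t(y_0,\xi))=\xi^t(y_0,\xi)$. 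Both strategies ultimately confront the same obstruction---a term of type $(\D_x^j\tb)\cdot\xi^t$ with $j\ge 2$---and both resolve it by the integration-by-parts trick introduced in Proposition~\ref{p:bilip}. The PDE route is slightly more economical because transport preserves $L^\infty$ automatically, so one need not separately track a lower bound on $\D_x x^t$; your Step~2 relies on dividing by $(\D_x x^t)^{k-1}$, which uses part~b) of Proposition~\ref{p:bilip} and hence the restriction $t\ll\tau^{-1}$. In practice this is the regime where the estimates are applied, so it is not a genuine loss.

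Two small points of care: first, be explicit that the differentiation variable in Step~2 is the initial position $y_0$ of the characteristic with $(x,\xi)$ (the parameters of $\psi_{x,\xi}$) held fixed---otherwise differentiating in the subscript $x$ would produce unwanted mixed derivatives $\D_x\D_y^j\psi_{x,\xi}$; second, no ``rescaling back'' is needed at the end, since the statement of the proposition is already in the rescaled variables of \eqref{transport-symbol-scaled}.
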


\begin{proof}

Differentiating \eqref{eik}, we have
\begin{equation*}
\begin{aligned}
-\D_t \D_y \psi_{x, \xi} &= \ta_y(t, y,  \D_y \psi_{x, \xi}) + \ta_\xi(t, y,  \D_y \psi_{x, \xi}) \D_y^2 \psi_{x, \xi}.
\end{aligned}
\end{equation*}
Differentiating again and writing $v =  \D_y^2 \psi_{x, \xi}$, we have
\begin{equation}\label{d2eikonal}
\begin{aligned}
-\D_t v &= \ta_{yy}(t, y,  \D_y \psi_{x, \xi}) + 2\ta_{y\xi}(t, y, \D_y \psi_{x, \xi})v + \ta_{\xi\xi}(t, y,  \D_y \psi_{x, \xi})v^2 \\
&\quad + \ta_\xi(t, y, \D_y \psi_{x, \xi}) \D_y v.
\end{aligned}
\end{equation}

The first term on the right hand side of \eqref{d2eikonal} is unbounded. To address this, we integrate:
\begin{equation*}
\begin{aligned}
\ta_{yy}(t, y,  \D_y \psi_{x, \xi}) &= \ta_\xi(t, y, \D_y \psi_{x, \xi}) \tb_{yy}(t, y) \frac{\D_y \psi_{x, \xi}}{\ta_\xi(t, y, \D_y \psi_{x, \xi})} \\
&= \ta_\xi(t, y, \D_y \psi_{x, \xi}) \D_y \left( \tb_{y}(t, y) \frac{\D_y \psi_{x, \xi}}{\ta_\xi(t, y, \D_y \psi_{x, \xi})} \right) \\
&\quad - \tb_{y}(t, y) v + \tb_{y}(t, y)^2  \left( \frac{\D_y \psi_{x, \xi}}{\ta_\xi(t, y, \D_y \psi_{x, \xi})} \right).
\end{aligned}
\end{equation*}

Substituting into \eqref{d2eikonal}, we obtain
\begin{equation}\label{d2eikonal2}
\begin{aligned}
-\D_t v &= \ta_{\xi\xi}(t, y,  \D_y \psi_{x, \xi})v^2 \\
&\quad + (\tb_{y}(t, y) + \ta_\xi(t, y, \D_y \psi_{x, \xi}) \D_y) \left( v + \tb_{y}(t, y) \frac{\D_y \psi_{x, \xi}}{\ta_\xi(t, y, \D_y \psi_{x, \xi})} \right).
\end{aligned}
\end{equation}
Defining
\[
\tilde v = \tb_{y}(t, y) \frac{\D_y \psi_{x, \xi}}{\ta_\xi(t, y, \D_y \psi_{x, \xi})}, \qquad
w = v + \tilde v,
\]
we have
\begin{equation}\label{d2eikonal3}
\begin{aligned}
(\D_t + \tilde a_{\xi\xi}(t, y,  \D_y \psi_{x, \xi})v + \tilde b_{y}(t, y) + \tilde a_\xi(t, y, \D_y \psi_{x, \xi}) \D_y)  w = \D_t \tilde v + \tilde a_{\xi\xi}(t, y,  \D_y \psi_{x, \xi})v  \tilde v.
\end{aligned}
\end{equation}
We see that we have a transport equation for $w$, where the right hand side is bounded, using Lemma~\ref{l:eikonal} for the estimate $|v| \lesssim 1$. To address the unbounded transport velocity $\tilde  a_\xi$, we apply the Galilean transformation
\[
u(t, y) = w(t, y + \tau \mu^{-m} m \xi^{m - 1} t)
\]
which satisfies, by applying the same Galilean transformation to the equation,
\begin{equation}\label{d2eikonal4}
\begin{aligned}
(\D_t + a_{\xi\xi} v + b_{y} + (a_\xi - \tau \mu^{-m} m \xi^{m - 1} ) \D_y) u = \D_t \tilde v + \tilde a_{\xi\xi} v  \tilde v.
\end{aligned}
\end{equation}

The argument above has been applied for the case $\alpha = 2$ (which was already established in Lemma~\ref{l:eikonal}). The cases $\alpha > 2$ then follow by differentiating \eqref{d2eikonal3} to arrive at a transport equation of the same form. 
\end{proof}

We also require regularity of $\psi_{x, \xi}$ with respect to the initial data $(x, \xi)$:

\begin{proposition}\label{p:eikonalx}
We have $|\D^\alpha_{y, x, \xi} \psi_{x, \xi}| \lesssim 1$ for $|\alpha| \geq 2$. Further, we have 
\[|\D_\xi \psi_{x, \xi}| \lesssim 1 + |y - x^t|, \quad|\D_x \psi_{x, \xi} - \xi|\lesssim 1.
\]
\end{proposition}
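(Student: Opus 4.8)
The plan is to split the multi-index $\alpha$ into three cases: those involving only $\D_y$; those involving at least one $\D_x$; and those involving at least one $\D_\xi$ and no $\D_x$. The first case is exactly Proposition~\ref{p:eikonaly}. The second is essentially free: since \eqref{eik} involves $\psi_{x,\xi}$ only through $\D_t\psi_{x,\xi}$ and $\D_y\psi_{x,\xi}$, and the initial data for the parameters $(x,\xi)$ and $(0,\xi)$ differ by the additive constant $\xi x$, uniqueness gives $\psi_{x,\xi}(t,y) = \psi_{0,\xi}(t,y) - \xi x$; hence $\D_x\psi_{x,\xi} \equiv -\xi$ (this is the asserted estimate, up to the sign inherited from \eqref{eik}), and every higher $\D_x$-derivative vanishes except $\D_x\D_\xi\psi_{x,\xi}\equiv -1$. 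So all the content is in the third case.

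For a single $\D_\xi$, differentiating \eqref{eik} gives the homogeneous transport equation
\[
\D_t(\D_\xi\psi_{x,\xi}) + \ta_\xi(t, y, \D_y\psi_{x,\xi})\,\D_y(\D_\xi\psi_{x,\xi}) = 0, \qquad \D_\xi\psi_{x,\xi}(0, y) = y - x,
\]
where $\ta_\xi$ denotes the derivative of the symbol in its frequency slot. By the method of characteristics (as in the proof of Lemma~\ref{l:eikonal}), $\D_y\psi_{x,\xi}(s, x^s(y_0, \xi)) = \xi^s(y_0, \xi)$ for every $y_0$, so by \eqref{ham} the curves $s \mapsto x^s(y_0, \xi)$ are precisely the characteristics of this transport operator. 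Thus $\D_\xi\psi_{x,\xi}$ is constant along the Hamilton characteristics, $\D_\xi\psi_{x,\xi}(t, x^t(y_0, \xi)) = y_0 - x$. Writing $\Phi_t := x^t(\cdot, \xi)$, a bi-Lipschitz diffeomorphism of $\R$ by Proposition~\ref{p:bilip}, and using $x = \Phi_t^{-1}(x^t)$, we obtain $\D_\xi\psi_{x,\xi}(t, y) = \Phi_t^{-1}(y) - \Phi_t^{-1}(x^t)$, and hence $|\D_\xi\psi_{x,\xi}(t, y)| \lesssim |y - x^t| \le 1 + |y - x^t|$ since $\D_x x^t \approx 1$ by Proposition~\ref{p:bilip}.

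The higher mixed derivatives $\D_y^k \D_\xi^j \psi_{x,\xi}$ with $j \ge 1$ and $j + k \ge 2$ I would treat by a double induction: outer on the number $j$ of $\D_\xi$'s (the layer $j = 0$ being Proposition~\ref{p:eikonaly}), inner on the number $k$ of $\D_y$'s. Differentiating the equation of the preceding layer (in $y$ to raise $k$, in $\xi$ to raise $j$) produces a transport equation
\[
\D_t(\D_y^k \D_\xi^j \psi_{x,\xi}) + \ta_\xi(t, y, \D_y\psi_{x,\xi})\,\D_y(\D_y^k \D_\xi^j \psi_{x,\xi}) + c\, \D_y^k \D_\xi^j \psi_{x,\xi} = F
\]
along the same Hamilton characteristics, with initial data $\D_y^k\D_\xi^j[\xi(y-x)]$, which vanishes for $j + k \ge 3$ and lies in $\{0, -1\}$ for $j + k = 2$, and with $c$ and $F$ assembled from derivatives of $\ta$ evaluated at $(t, y, \D_y\psi_{x,\xi})$ and from derivatives of $\psi_{x,\xi}$ in strictly earlier layers. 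The key structural point is that, because $\ta$ carries no dependence on the parameter $\xi$ and the first $\D_\xi$ falls inside its frequency slot, every symbol factor appearing in $c$ and $F$ carries at least one frequency-slot derivative, and is therefore bounded in $L^1_t L^\infty$ by \eqref{mixed} and \eqref{xixi}. In particular no unbounded $\ta_{yy}$-type term arises, so — in contrast with the pure-$y$ tower of Proposition~\ref{p:eikonaly} — no integration by parts is needed. The accompanying $\psi_{x,\xi}$-factors are always either pure $y$-derivatives $\D_y^\ell \psi_{x,\xi}$ with $\ell \ge 2$ (bounded by Proposition~\ref{p:eikonaly}) or mixed derivatives from a strictly earlier layer (bounded by induction); moreover $\D_y\psi_{x,\xi}$, which is only $\approx \xi$ and hence large, occurs solely inside symbol arguments, and the non-uniformly-bounded $\D_\xi\psi_{x,\xi}$ never occurs as a bare factor. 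A Gronwall estimate along the characteristics, with $\|c\|_{L^1_t L^\infty} \lesssim 1$ and $\|F\|_{L^1_t L^\infty} \lesssim 1$, then yields $|\D_y^k \D_\xi^j \psi_{x,\xi}| \lesssim 1$ and closes the induction.

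The main obstacle is the bookkeeping behind the structural claim in the previous paragraph: one must organize the Fa\`a di Bruno expansion of the successive differentiations of $\ta(t, y, \D_y\psi_{x,\xi})$ carefully enough to verify (a) that every symbol factor does pick up a frequency-slot derivative once a $\D_\xi$ has acted — so that the bounds \eqref{mixed}, \eqref{xixi} apply with no $\ta_{yy}$-type loss — and (b) that the companion $\psi_{x,\xi}$-factors are always of a type already controlled (pure $y$-derivatives of order $\ge 2$, or mixed derivatives from a strictly earlier layer), so the double induction is genuinely well-founded. This is exactly the mechanism that lets the argument remain on the unit rather than microlocal time scale, without repeating the integration-by-parts device of Proposition~\ref{p:eikonaly}.
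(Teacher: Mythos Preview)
Your proposal is correct and takes a somewhat different, more geometric route than the paper. The paper's proof is extremely terse: it differentiates \eqref{eik} in $x$ to obtain the homogeneous transport equation $-\D_t\D_x\psi_{x,\xi} = \ta_\xi\,\D_y\D_x\psi_{x,\xi}$, then says to apply the Galilean transformation of Proposition~\ref{p:eikonaly} to handle the unbounded velocity $\ta_\xi$; all other cases are left implicit as ``similar.'' Your uniqueness argument for the $x$-dependence ($\psi_{x,\xi} = \psi_{0,\xi} - \xi x$, hence $\D_x\psi_{x,\xi} \equiv -\xi$) is cleaner and dispenses with the transport machinery entirely for that case, confirming in passing the sign issue you flag in the stated bound. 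For the $\xi$-tower, your structural observation --- that once a $\D_\xi$ has acted, every symbol factor in the resulting transport equation carries a frequency-slot derivative and is therefore controlled directly by \eqref{mixed} and \eqref{xixi}, so the integration-by-parts device of Proposition~\ref{p:eikonaly} is unnecessary --- is correct and is not made explicit in the paper. The paper's Galilean shift and your method of characteristics (Gronwall along the Hamilton flow) are equivalent ways of coping with the large transport speed; what your argument buys is a clear account of why the $x$- and $\xi$-cases are genuinely easier than the pure-$y$ case.
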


\begin{proof}
The proof is similar to the proof of Proposition \ref{p:eikonaly}. For instance, differentiating \eqref{eik}, we have
\begin{equation*}
\begin{aligned}
-\D_t \D_x \psi_{x, \xi} &= \ta_\xi(t, y,  \D_y \psi_{x, \xi}) \D_y \D_x \psi_{x, \xi}.
\end{aligned}
\end{equation*}
Then apply the Galilean transformation as in the proof of Proposition \ref{p:eikonaly}.
\end{proof}

\subsection{The parametrix construction}

Following \cite{tataru2004phase}, we use a Fourier–Bros–Iagolnitzer (FBI) phase space transform to construct a wave packet parametrix. For a more thorough discussion of the properties of the FBI transform, the reader may refer to the comprehensive exposition of Delort \cite{FBId}.

The FBI transform takes the form 
\[(Tf)(x, \xi) = 2^{-\frac{d}{2}} \pi^{-\frac{3d}{4}} \int e^{-\half(x - y)^2}e^{i\xi(x - y)} f(y)\, dy,\]
and is an isometry from $L^2(\R^d)$ to phase space $L^2(\R^{2d})$ with an inversion formula
\[f(y) = (T^*Tf)(y) = 2^{-\frac{d}{2}} \pi^{-\frac{3d}{4}} \int e^{-\half(x - y)^2}e^{-i\xi(x - y)} (Tf)(x, \xi) \, dx d\xi.\]

We can use the FBI transform to quantify the phase space localization of the evolution operator $S(t, s)$ around the corresponding Hamilton flow. Let $\chi(t, s)$ denote the family of transformations on the phase space $L^2(\R^{2d})$ given by \eqref{ham},
$$\chi(t, s)(x^s, \xi^s) = (x^t, \xi^t).$$
It was shown in \cite{tataru2004phase} that for the class of symbols $a \in S_{0, 0}^{0, (k)}$ defined by
\begin{equation}\label{S2}
|\D_x^\alpha \D_\xi^\beta a(t, x, \xi)| \leq c_{\alpha ,\beta}, \qquad |\alpha| + |\beta| \geq k,
\end{equation}
the flow satisfies the following properties:
\begin{theorem}\label{t:bilipthm}
Let $a(t, x, \xi) \in S_{0, 0}^{0, (2)}$. Then
\begin{enumerate}
\item The Hamilton flow \eqref{ham} is well-defined and bilipschitz.

\item The kernel $\tilde{K}(t,s)$ of the phase space operator $TS(t, s)T^*$ decays rapidly away from the graph of the Hamilton flow,
\[
|\tilde{K}(t, x, \xi, s, y, \eta)| \lesssim (1 + |(x, \xi) - \chi(t, s)(y, \eta)|)^{-N}.
\]
\end{enumerate}
\end{theorem}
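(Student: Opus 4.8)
The plan is to treat the two assertions separately; since all bounds may depend on the length of the (compact) time interval, it suffices to work for $t, s \in [0, 1]$.

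\emph{The Hamilton flow.} Because $a \in S_{0,0}^{0,(2)}$, the Hamilton vector field $H_a = (a_\xi, -a_x)$ is globally Lipschitz in $(x,\xi)$: its differential is $J \nabla^2 a$, with $J$ the symplectic matrix and $\nabla^2 a$ the uniformly bounded Hessian of $a$ in $(x,\xi)$, so $|H_a(t, z)| \lesssim 1 + |z|$. Standard ODE theory (Picard iteration) together with the Gronwall a priori bound $|\chi(t,s) z| \lesssim (1 + |z|) e^{C|t - s|}$ then shows that \eqref{ham} defines a global flow $\chi(t,s)$. For the bilipschitz property, differentiating \eqref{ham} in the initial datum gives for the Jacobian $M = D\chi(t,s)$ the variational equation $\dot M = J \nabla^2 a\, M$ along the flow, with $M|_{t = s} = \mathrm{Id}$; since $\|\nabla^2 a\|_{L^\infty} \lesssim 1$, Gronwall gives $\|M\| \lesssim 1$ on $[0,1]$, and the relation $D\chi(t,s)^{-1} = D\chi(s,t)$ (at the image point) with the same estimate controls $\|M^{-1}\|$. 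Hence $\chi(t,s)$ is bilipschitz uniformly for $t, s \in [0,1]$.

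\emph{The kernel bound.} The idea, following \cite{tataru2004phase} (see \cite{FBId} for the FBI calculus), is that conjugation by $T$ turns the evolution into transport along $H_a$ modulo a bounded, order-zero error, and that this transport propagates rapid decay away from the graph of $\chi$. \textbf{Step 1 (conjugation identities).} The transform $T$ intertwines $y$ and $D_y$ with explicit first-order operators on phase space (Bargmann-type creation/annihilation relations); Taylor expanding $a$ about each packet center then gives, for $a \in S_{0,0}^{0,(0)}$, an identity $T a^w(y, D_y) = \tilde a^\sharp T$ with $\tilde a^\sharp$ a phase-space quantization of $a$, and, for a second symbol $b$, $[\tilde a^\sharp, \tilde b^\sharp] = \widetilde{\{a, b\}}^\sharp + E_{a,b}$ where $E_{a,b}$ is bounded by $\|\nabla^2 a\|_{L^\infty}\|\nabla^2 b\|_{L^\infty}$ plus operators with kernels decaying rapidly off the diagonal -- this is exactly where membership in $S_{0,0}^{0,(2)}$ is used. \textbf{Step 2 (principal parametrix).} Let $E_0(t, s)$ be the pullback by $\chi(t, s)$ times the bounded nonvanishing amplitude prescribed by the transport equation for the leading symbol; its kernel is a measure supported exactly on $\{(z, z') : z = \chi(t, s) z'\}$, so it obeys the asserted bound trivially, while the conjugation identities give $(i\D_t + \tilde a^\sharp) E_0(t, s) = R(t, s)$ with $R$ having a kernel that is bounded and rapidly decaying off this graph. \textbf{Step 3 (Duhamel).} Solve the error away by a Neumann series $E = E_0 + \sum_{k \ge 1} \int E_0(t, t_1) Q(t_1, t_2) \cdots \, dt_1 \cdots$, convergent in operator norm on $[0,1]$ since each factor is $L^2$-bounded; the structural lemma driving the estimate is that composing a kernel $\lesssim (1 + |z - \chi(t, r) z'|)^{-N}$ with a kernel $\lesssim (1 + |z' - \chi(r, s) z''|)^{-N}$ again gives $\lesssim (1 + |z - \chi(t, s) z''|)^{-N}$ (with $N$ diminished by a fixed amount, harmless as $N$ is arbitrary), which follows from $\chi(t, s) = \chi(t, r) \chi(r, s)$, the decomposition $z - \chi(t, s) z'' = (z - \chi(t, r) z') + \chi(t, r)(z' - \chi(r, s) z'')$, and the uniform Lipschitz bound on $\chi(t, r)$ proved above, giving the distorted triangle inequality $|z - \chi(t, s) z''| \lesssim |z - \chi(t, r) z'| + |z' - \chi(r, s) z''|$. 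Hence every term of the series obeys the claimed decay with geometrically growing constant, and so does $E(t, s)$. Finally, since $a$ is real, $a^w$ is (essentially) self-adjoint and $S(t, s) = e^{i(t - s) a^w}$ is unitary; applying $T$, using $T^* T = \mathrm{Id}_{L^2}$, a uniqueness argument for the transport-plus-bounded equation, and the fact that $E_0$ (hence $E$) is built from $T$ and so respects the range projection $P = T T^*$, one identifies $T S(t, s) T^* = E(t, s)$, which yields the stated decay of $\tilde K(t, s)$.

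\emph{Main obstacle.} The technical heart is the FBI conjugation step -- proving that $T a^w T^*$ is, modulo bounded kernels decaying rapidly off the diagonal, the quantization of transport along $H_a$ with all errors controlled purely by $\|\nabla^2 a\|_{L^\infty}$ -- together with propagating the off-graph rapid decay through the Neumann series, which rests on the kernel-composition lemma and hence on the uniform bilipschitz property of $\chi$ from the first part. A persistent bookkeeping burden is the non-surjectivity of $T$: one must work consistently on its range (equivalently, with $P = T T^*$) without disturbing the kernel estimates.
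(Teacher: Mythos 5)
The paper does not prove Theorem~\ref{t:bilipthm}: it is stated as background and cited from \cite{tataru2004phase}, with the $L^1$-in-time generalization in \cite{marzuola2008wave}, so there is no in-paper argument to compare yours against. Evaluating your sketch on its own merits: part (1) is correct and essentially complete. The hypothesis $a \in S_{0,0}^{0,(2)}$ gives precisely that the coefficient $J\nabla^2 a$ of the variational equation is uniformly bounded, Gronwall on $[0,1]$ gives $\|D\chi(t,s)\| \lesssim 1$, and the inverse-Jacobian identity $(D\chi(t,s)_z)^{-1} = D\chi(s,t)_{\chi(t,s)z}$ (from $\chi(s,t)\circ\chi(t,s) = \mathrm{Id}$) closes the bilipschitz bound.

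Part (2) is where there is a genuine gap, and also where your route departs from the one actually used in \cite{tataru2004phase}. Tataru's argument is not a parametrix-plus-Neumann-series construction but a weighted $L^2$ (Agmon-type) estimate: the key identity is a splitting $T a^w T^* = (\text{transport along } H_a) + (\text{multiplication by } a) + E$, with $E$ an operator whose kernel decays rapidly off the diagonal and whose norm is controlled by $\|\nabla^2 a\|_{L^\infty}$; one then shows $e^{\lambda\phi}\, T S(t,s) T^*\, e^{-\lambda\phi}$ is uniformly $L^2$-bounded over a family of flow-adapted Lipschitz weights $\phi$ (conjugation by such weights preserves the structure of $E$ thanks to the bilipschitz bound of part (1)), and recovers the pointwise kernel decay by varying the weight. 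Your version is plausible but Step~2 is an assertion, not an argument. You take $E_0$ to have kernel a delta measure on the graph of $\chi$ and claim that $(i\D_t + \tilde a^\sharp)E_0$ has a bounded, rapidly decaying kernel. For that to be true you need exactly the splitting above: the first-order transport part of $\tilde a^\sharp$ must cancel $i\D_t E_0$ through the transport/eikonal pair, and only then does the remainder, composed with the delta on the graph, produce the good error kernel $R$. This splitting -- and in particular why the remainder's kernel decays and is controlled solely by $\|\nabla^2 a\|_{L^\infty}$, which is what makes the symbol class $S_{0,0}^{0,(2)}$ the right hypothesis -- is the entire analytic content of the theorem. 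You allude to it (``turns the evolution into transport\ldots{} modulo a bounded, order-zero error'') and correctly flag it as ``the technical heart,'' but you neither state it precisely nor sketch why it holds; a one-line gesture is not enough here. Step~3 (the distorted triangle inequality derived from part (1), the Schur-type kernel composition lemma, and the Neumann series) is sound once the constants are tracked, but note that convergence is not merely ``since each factor is $L^2$-bounded'': you need the $1/k!$ from the iterated time simplex to absorb the $C^k$ growth in the kernel constants, and the per-iterate loss in the decay exponent $N$ must be checked to be additive and harmless, which it is because $N$ is arbitrary.
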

Then we have the following phase space representation for solutions to (\ref{general}), as a consequence of \cite[Theorem 4]{tataru2004phase}:
\begin{theorem}\label{t:repformula}
Let $a(t, x, \xi) \in S_{0, 0}^{0, (2)}$. Then the kernel $K(t, s)$ of the evolution operator $S(t, s)$ for $i\D_t + A^w$ can be represented in the form 
$$K(t, y, s, \tilde{y}) = \int e^{-\half(\tilde{y} - x^s)^2} e^{-i\xi^s(\tilde{y} - x^s)} e^{i(\psi(t, x, \xi) - \psi(s, x, \xi))} e^{i\xi^t(y - x^t)} G(t, s, x, \xi, y) \, dx d\xi ,$$
where the function $G$ satisfies
\[
|(x^t - y)^\gamma \D_x^\alpha \D_\xi^\beta \D_y^\nu G(t, s, x, \xi, y)| \lesssim c_{\gamma, \alpha, \beta, \nu}.
\]
\end{theorem}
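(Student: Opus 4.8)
The plan is to deduce the representation from the FBI phase space parametrix of \cite[Theorem 4]{tataru2004phase}, repackaging it into the stated wave packet form by reparametrizing along the Hamilton flow \eqref{ham} and extracting the accumulated eikonal phase explicitly from the amplitude. The input is that the symbol class $S_{0,0}^{0,(2)}$ of \eqref{S2} is exactly the class for which Theorem~\ref{t:bilipthm} applies: the Hamilton flow is bilipschitz, and the kernel $\tilde K(t,s)$ of $TS(t,s)T^*$, together with all its derivatives in the phase space variables, decays faster than any polynomial away from the graph of $\chi(t,s)$.

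First I would conjugate back to physical space. Writing $S(t,s) = T^*\big(TS(t,s)T^*\big)T$ via the FBI isometry and its inversion formula, the kernel of $S(t,s)$ becomes a four-fold phase space integral
\[
K(t,y,s,\tilde y) = c \iiiint e^{-\frac12(x'-y)^2} e^{-i\xi'(x'-y)} \, \tilde K(t,x',\xi',s,y',\eta') \, e^{-\frac12(y'-\tilde y)^2} e^{i\eta'(y'-\tilde y)} \, dx' d\xi' \, dy' d\eta'.
\]
Since $\tilde K$ is concentrated on the graph $(x',\xi') = \chi(t,s)(y',\eta')$, I would introduce the time-$0$ data $(x,\xi)$ as the parametrization of this graph, so that $(x',\xi') = (x^t,\xi^t)$ and $(y',\eta') = (x^s,\xi^s)$, and change variables from $(x',\xi',y',\eta')$ to $(x,\xi)$ together with off-graph deviations; the bilipschitz property of Theorem~\ref{t:bilipthm} (quantitatively, Propositions~\ref{p:bilip} and~\ref{p:disp}) makes this nondegenerate. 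Integrating out the off-graph deviations against the two Gaussians and the rapid decay of $\tilde K$ collapses the expression to a single integral $\int\cdots dx\,d\xi$. In this reduction the incoming packet contributes precisely the factors $e^{-\frac12(\tilde y - x^s)^2}$ and $e^{-i\xi^s(\tilde y - x^s)}$, the outgoing packet contributes $e^{i\xi^t(y - x^t)}$ together with a Gaussian $e^{-\frac12(x^t - y)^2}$, and the oscillation carried by $\tilde K$ along the characteristics is the accumulated classical action $\psi_{x,\xi}(t,x^t) - \psi_{x,\xi}(s,x^s)$ — abbreviated $\psi(t,x,\xi) - \psi(s,x,\xi)$ — because $\psi_{x,\xi}$ solves the eikonal equation \eqref{eik} whose characteristics are \eqref{ham}.

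The amplitude $G(t,s,x,\xi,y)$ is then \emph{defined} to collect the remaining factors: the transported amplitude of $\tilde K$ with its oscillation removed, the Jacobian of the reparametrization, the Gaussian $e^{-\frac12(x^t-y)^2}$, the residual off-graph Gaussian tails, and the curvature remainder $\exp\big(i(\psi_{x,\xi}(t,y) - \psi_{x,\xi}(t,x^t) - \xi^t(y-x^t))\big)$ between the exact eikonal phase in the evolved packet and its linearization about $x^t$. It then remains to verify $|(x^t - y)^\gamma \D_x^\alpha \D_\xi^\beta \D_y^\nu G| \lesssim c_{\gamma,\alpha,\beta,\nu}$. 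The weights $(x^t - y)^\gamma$ are controlled by the Gaussian $e^{-\frac12(x^t-y)^2}$ and the rapid off-graph decay of $\tilde K$; the $\D_x,\D_\xi$ derivatives use the flow regularity of Propositions~\ref{p:bilip} and~\ref{p:disp} and the regularity of $\psi_{x,\xi}$ in the initial data from Proposition~\ref{p:eikonalx} (notably $|\D_\xi\psi_{x,\xi}| \lesssim 1 + |y-x^t|$ and $|\D_x\psi_{x,\xi} - \xi| \lesssim 1$, which bound the curvature remainder and its $x,\xi$-derivatives against the Gaussian weight); and the $\D_y$ derivatives use Lemma~\ref{l:eikonal} and Proposition~\ref{p:eikonaly}, which bound all $y$-derivatives of $\psi_{x,\xi}$ of order $\geq 2$, so that differentiating the curvature remainder in $y$ produces only factors of size $O(|y-x^t|)$, absorbed by the Gaussian.

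The step I expect to be the main obstacle is carrying out the reduction from the four-fold phase space integral to the single integral $\int\cdots dx\,d\xi$ rigorously — cleanly separating the flow parametrization $(x,\xi)$ from the off-graph directions, integrating the latter out against the Gaussians and the rapid decay of $\tilde K$, and tracking every amplitude factor — together with the verification that the resulting $G$ genuinely has bounded $y$-derivatives. The latter is precisely where the \emph{exact} eikonal phase must be retained in the packet rather than its linearization: only because $\psi_{x,\xi}(t,\cdot)$ agrees with its first-order Taylor polynomial about $x^t$ up to a remainder with bounded second and higher $y$-derivatives (Proposition~\ref{p:eikonaly}) does the factor $\exp\big(i(\psi_{x,\xi}(t,y) - \psi_{x,\xi}(t,x^t) - \xi^t(y-x^t))\big)$, paired with the Gaussian decay in $x^t - y$, belong to the symbol class defining $G$. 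Beyond this, the proof is an application of \cite[Theorem 4]{tataru2004phase} together with bookkeeping using the flow and eikonal estimates established above.
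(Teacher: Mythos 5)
The paper does not prove Theorem~\ref{t:repformula} at all; it is quoted as a direct citation of \cite[Theorem 4]{tataru2004phase}, with the only implicit observation that the time origin can be shifted from $0$ to $s$. Your reconstruction is therefore not compared against a proof in this paper, but it does contain a genuine gap. You set out to derive the representation from the kernel decay of $TS(t,s)T^*$ in Theorem~\ref{t:bilipthm}(2), and that estimate bounds only $|\tilde K|$. It carries no phase information. When you write that ``the oscillation carried by $\tilde K$ along the characteristics is the accumulated classical action,'' you are asserting precisely the structure you need to derive, not deriving it. No manipulation of a decay bound on a modulus --- integrating against Gaussians, changing variables along the flow, collapsing to $\int\,dx\,d\xi$ --- can produce the oscillatory factors $e^{i(\psi(t,x,\xi)-\psi(s,x,\xi))}$ and $e^{i\xi^t(y-x^t)}$; those must come from a structural result, which is what Tataru's Theorem~4 already is. Read strictly, your argument either invokes Theorem~4 (in which case the ``four-fold integral and integrate out off-graph deviations'' machinery is superfluous and the deduction is circular) or invokes only Theorem~\ref{t:bilipthm}(2) (in which case the phase extraction step has no basis).

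The route the paper actually uses --- visible in its proof of the adapted result Theorem~\ref{t:repformulaEik}, which is the nontrivial variant --- is quite different and more direct: one FBI-decomposes $u_0$ into coherent states $\phi_{x,\xi}$, \emph{defines} $G(t,x,\xi,y) = e^{-i\psi_{x,\xi}(t,y)}(S(t)\phi_{x,\xi})(y)$ by dividing off the phase, then conjugates by a spatial translation and a Galilean boost to obtain a clean evolution equation for $G$, and finally proves that this evolution preserves Schwartz data via energy estimates for $y^\beta\D_y^{\beta'}G$. There is no passage through the kernel of $TS(t,s)T^*$. A secondary problem: your verification of the $G$-bounds invokes Lemma~\ref{l:eikonal} and Propositions~\ref{p:eikonaly}--\ref{p:eikonalx}, but those were established for the rescaled transport-modified symbol $\tilde a$ of \eqref{transport-symbol-scaled}, not for a general $a\in S_{0,0}^{0,(2)}$. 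Analogous (indeed simpler) eikonal estimates do hold for $S_{0,0}^{0,(2)}$, but you would need to state and prove them rather than cite the transport versions.
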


Theorems \ref{t:bilipthm} and \ref{t:repformula} were generalized in \cite{marzuola2008wave} to the class of symbols $a\in S^{(k)}L_\chi^1$ with $k = 2$, satisfying
\[
\sup_{x, \xi} \int_0^1|\D_x^\alpha \D_\xi^\beta a(t, \chi(t, 0)(x, \xi))| \, dt \leq c_{\alpha, \beta}, \qquad |\alpha| + |\beta| \geq k.
\]

We will not be able to directly use the parametrix of Theorem~\ref{t:repformula} in our current setting, because our symbol $\ta$ does not fall in the symbol class $S_{0, 0}^{0, (2)}$ due to the variable transport $\tb$. Instead, we will prove an appropriate adaptation of Theorem \ref{t:repformula} for our symbol directly. Rather than using the $(x^s, \xi^s)$-centered bump functions
\[ e^{-\half(\tilde{y} - x^s)^2} e^{-i\xi^s(\tilde{y} - x^s)} e^{ - i\psi(s, x, \xi)}  \] 
as wave packets, we refine the phase using solutions to the eikonal equation \eqref{eik},
\begin{equation*}
\D_t \psi_{x, \xi}(t, y) = -\tilde a(t, y, \D_y \psi_{x, \xi}(t, y)), \qquad \psi_{x, \xi}(0) = \xi(y - x),
\end{equation*}
and use instead the packets
\[ e^{-\half(\tilde{y} - x^s)^2} e^{-i\psi_{x, \xi}(s, y)}. \]

\begin{theorem}\label{t:repformulaEik}
The kernel $K(t, s)$ of the evolution operator $S(t, s)$ for $i\D_t + \tilde A^w$, where $\ta$ is given by \eqref{transport-symbol}, \eqref{L1-symbol-b}, \eqref{Linfty-symbol-b}, and \eqref{transport-symbol-scaled}, can be represented in the form 
\begin{equation}\label{kForm}
K(t, y, s, \tilde{y}) = \int e^{i\psi_{x, \xi}(t, y)} G(t, x, \xi, y) \cdot e^{-i\psi_{x, \xi}(s, \tilde y)}\tilde G(s, x, \xi, \tilde y) \, dx d\xi
\end{equation}
where the function $G$ satisfies
\[|(y - x^t)^\gamma \D_x^\alpha \D_\xi^\beta \D_y^\nu G(t, x, \xi, y)| \lesssim c_{\gamma, \alpha, \beta, \nu}
\]
and likewise for $\tilde G$.
\end{theorem}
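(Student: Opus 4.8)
The plan is to construct the kernel \eqref{kForm} by hand as a wave packet parametrix, following the FBI-transform construction behind Theorem~\ref{t:repformula} but replacing the Gaussian-times-linear-phase packets by packets carrying the \emph{exact} eikonal phase $\psi_{x,\xi}$. This is the whole point: the quadratic (linearized) phase of the $S^{0,(2)}_{0,0}$ theory cannot absorb the transport $\tb$, which remains nonperturbative after the rescaling \eqref{transport-symbol-scaled} and whose associated velocity $\ta_\xi \approx \mu\lambda$ is large, whereas the eikonal phase absorbs the full symbol $\ta$.

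\textbf{Step 1: the ansatz.} Introduce the packet synthesis operator $\Phi(t)$ acting on phase space functions by
\[
(\Phi(t)c)(y) = \int e^{i\psi_{x,\xi}(t,y)}\, G(t,x,\xi,y)\, c(x,\xi)\, dx\, d\xi,
\]
and its companion $\tilde\Phi(t)$ defined likewise with amplitude $\tilde G$, where $G(0,x,\xi,\cdot) = \tilde G(0,x,\xi,\cdot)$ is the FBI Gaussian $2^{-\frac12}\pi^{-\frac34} e^{-\half(\cdot - x)^2}$. Since $\psi_{x,\xi}(0,y) = \xi(y-x)$, we have $\Phi(0) = \tilde\Phi(0) = T^*$, hence $\Phi(0)\tilde\Phi(0)^* = T^*T = I$ exactly. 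We seek $K(t,s) = \Phi(t)\tilde\Phi(s)^*$, which is of the form \eqref{kForm}, so it suffices to arrange that $\Phi(t)$ (and $\tilde\Phi(t)$) approximately conjugates the flow, $(i\D_t + \tilde A^w)\Phi(t) = E(t)$ with $E(t)$ a small error of packet type; the Heisenberg-equation computation for $\Phi(t)\tilde\Phi(t)^*$, namely $\D_t(\Phi\tilde\Phi^*) = i[\tilde A^w, \Phi\tilde\Phi^*] + i(\Phi\tilde E^* - E\tilde\Phi^*)$, then propagates $\Phi(0)\tilde\Phi(0)^* = I$ to give $\Phi(t)\tilde\Phi(t)^* = I + R(t)$ with $R(t)$ negligible for $t \in [0,\tau^{-1}]$.

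\textbf{Step 2: the conjugated equation and choice of amplitude.} Apply $i\D_t + \tilde A^w$ to a single packet $e^{i\psi_{x,\xi}(t,y)}G(t,x,\xi,y)$. Expanding $\tilde A^w$ acting on an oscillatory function, the principal contribution $\ta(t,y,\D_y\psi_{x,\xi})\, e^{i\psi}G$ is cancelled against $i\D_t e^{i\psi}$ by the eikonal equation \eqref{eik}. The next term is of transport type,
\[
e^{i\psi}\Bigl( i\D_t G + \ta_\xi(t,y,\D_y\psi_{x,\xi})\, \D_y G + c_0(t,x,\xi,y)\, G \Bigr),
\]
with $c_0$ collecting the subprincipal symbol, the phase curvature $\D_y^2 \psi_{x,\xi}$, and lower-order pieces. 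We choose $G$ to solve this transport equation with the Gaussian data above; its characteristics are exactly the Hamilton flow $x^t$ of \eqref{ham}, since $\ta_\xi(t,x^t,\xi^t) = \dot x^t$. Using the flow estimates of Lemma~\ref{l:hamEst} and Propositions~\ref{p:bilip} and \ref{p:disp}, together with the eikonal estimates of Lemma~\ref{l:eikonal} and Propositions~\ref{p:eikonaly} and \ref{p:eikonalx}, to bound the transport velocity and the coefficient $c_0$ and its $(x,\xi,y)$-derivatives (integrably in $t$ on $[0,\tau^{-1}]$), the Gaussian localization about $y = x^t$ is propagated and all derivatives stay uniformly bounded in the large parameter $\mu\lambda$:
\[
\bigl|(y - x^t)^\gamma \D_x^\alpha \D_\xi^\beta \D_y^\nu G(t,x,\xi,y)\bigr| \lesssim c_{\gamma,\alpha,\beta,\nu},
\]
and similarly for $\tilde G$. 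The weight $(y - x^t)^\gamma$ survives because $y - x^t$ is transported to leading order, while differentiating the transport equation in $x,\xi,y$ produces only forcing terms already controlled by the same estimates.

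\textbf{Step 3: error bounds and closure.} With $G$ so chosen, the residual $E(t)$ in $(i\D_t + \tilde A^w)\Phi(t)$ is lower order --- two steps down in the symbol expansion, hence governed by the rescaled bounds \eqref{tb-est}, \eqref{mixed}, \eqref{xixi} --- and is again a superposition of packets localized near $x^t$, so that $\int_0^{\tau^{-1}} \|E(t)\|_{L^2 \to L^2}\, dt$ is finite and small. The $L^2$-boundedness of $\Phi(t)$, $\tilde\Phi(t)$, $E(t)$, and the identity $\Phi(t)\tilde\Phi(t)^* = I + R(t)$, all follow from an almost-orthogonality ($TT^*$, Cotlar--Stein) argument: the kernel of $\tilde\Phi(t)^*\Phi(t)$ is $\int e^{i(\psi_{x,\xi}(t,y) - \psi_{x',\xi'}(t,y))}\, G\, \overline{\tilde G}\, dy$, and the bilipschitz and dispersive bounds for $(x,\xi) \mapsto (x^t,\xi^t)$ from Propositions~\ref{p:bilip} and \ref{p:disp}, combined with the packet localization, give rapid off-diagonal decay in $(x,\xi) - (x',\xi')$. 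Finally, Duhamel's principle expresses the true evolution $S(t,s)$ as $\Phi(t)\tilde\Phi(s)^*$ plus a correction given by $S$ applied to $E(\cdot)\tilde\Phi(s)^*$ and to $R(s)$; the resulting Volterra equation for $S$ is solved by iteration, and its terms are again of wave-packet form with improved decay and smallness, so they can be absorbed into $G$ and $\tilde G$, yielding \eqref{kForm} with the stated bounds.

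\textbf{Main obstacle.} The crux is that $\tb$ does not become perturbative after rescaling, so neither a linearized quadratic phase nor a constant-coefficient comparison flow suffices on the full interval $[0,\tau^{-1}]$. Passing to the exact eikonal phase cures this but transfers the difficulty to establishing uniform-in-$\mu\lambda$ regularity of $\psi_{x,\xi}$ in all of $y, x, \xi$ --- precisely Propositions~\ref{p:eikonaly} and \ref{p:eikonalx}, whose proofs already require the Galilean reduction and the time-derivative hypotheses \eqref{L1-symbol-b}, \eqref{Linfty-symbol-b} with $\gamma = 1$. A secondary difficulty is that, with a curved (non-quadratic) phase, one can no longer invoke the FBI isometry directly; the reproducing identity and the $L^2$ mapping bounds for $\Phi(t)$ must be re-derived from the bilipschitz and dispersive estimates of Propositions~\ref{p:bilip} and \ref{p:disp}, together with the amplitude bounds of Step~2.
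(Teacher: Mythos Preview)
Your overall plan---use the exact eikonal phase to absorb the nonperturbative transport, then control the amplitude---matches the paper's motivation, and your identification of the main obstacle is correct. But the execution diverges from the paper in a way that leaves a real gap.

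The paper does \emph{not} build an approximate parametrix and then iterate. It defines $G$ exactly as the phase-stripped evolution of the Gaussian packet under the true flow,
\[
G(t,x,\xi,y) := e^{-i\psi_{x,\xi}(t,y)}\,(S(t)\phi_{x,\xi})(y), \qquad \tilde G(0,x,\xi,\tilde y) := e^{-\half(\tilde y - x)^2},
\]
(after reducing to $s=0$ by concatenation), so that the representation \eqref{kForm} holds \emph{exactly} by the FBI inversion formula---no error term, no Volterra iteration, no almost-orthogonality argument needed. All the work is transferred to proving the Schwartz bounds on this exact $G$. That is done by conjugating the full equation by the phase, freezing $(x,\xi)$ at $(x_0,\xi_0)$ via the regularity of $\psi$ from Propositions~\ref{p:eikonaly} and \ref{p:eikonalx}, and translating by $x_0^t$ to obtain an exact equation $(i\D_t + \tilde a_2(t,y,y',D))G_2 = 0$ with Gaussian data. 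One then shows directly that this equation preserves Schwartz data by coupled $L^2$ energy estimates for $y^\beta\D_y^{\beta'}G_2$ and Gronwall; the key technical point is a splitting $\tilde a_2 = \tilde a_3 + \tilde a_4$ isolating the unbounded transport piece, whose $\eta$-derivative contributes a $\|yu\|_{L^2}$ term rather than an $H^1$ term, so that $\|yv\|_{L^2}$ and $\|\D_y v\|_{L^2}$ close together.

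Your Step~3 is where the gap lies. The Duhamel formula you invoke writes $S$ as the parametrix plus $S$ applied to the error, so the iterated series contains terms $S(t,r)E(r)\tilde\Phi(s)^*$ involving the true evolution $S$. Asserting that these are ``again of wave-packet form'' and can be absorbed into $G,\tilde G$ with the stated pointwise bounds presupposes that $S$ carries a packet to a packet with the same eikonal phase and a Schwartz amplitude---which is exactly the theorem you are proving. The smallness of $\int\|E\|\,dt$ gives operator-norm convergence of the Neumann series, but that is an $L^2\to L^2$ statement, not the pointwise kernel identity \eqref{kForm} with the required amplitude bounds. One can sometimes close such loops (e.g.\ by proving a packet-to-packet mapping property for $S$ first, or by bootstrapping on the amplitude class), but that is substantial additional work you have not set up. The paper's exact definition of $G$ sidesteps this circularity entirely, at the cost of having to analyze the full conjugated operator $\tilde a_2$ rather than just a first-order transport equation.
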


\begin{proof}
By concatenating with $S(0, s)$, we may assume without loss of generality that $s = 0$, and write 
\[(x^s, \xi^s) = (x^0, \xi^0) = (x, \xi), \quad S(t, s) = S(t). \]
We use the FBI transform to decompose $u_0$ into coherent states, writing
\[ u(y) = (S(t, 0)T^* T u_0)(y) = \int (S(t)\phi_{x, \xi})(y) (Tu_0)(x, \xi) \, dx d\xi \]
where
\[ \phi_{x, \xi}(y) = e^{-\half(x - y)^2}e^{-i\xi(x - y)}. \]
Then we define the function $G$ by
\[ G(t, x, \xi, y) = e^{-i\psi_{x, \xi}(t, y)}(S(t)\phi_{x, \xi})(y) \]
so that $K$ has the desired form \eqref{kForm}, with
\[\tilde G(s, x, \xi, \tilde y) = e^{-\half (\tilde y - x)^2}.\]

\

It remains to prove the estimate on $G$. By the regularity estimates of Propositions~\ref{p:eikonaly} and \ref{p:eikonalx} on $\psi$, we may multiply by
\[
e^{i\psi_{x, \xi}(t, y)} e^{-i(\psi_{x_0, \xi_0}(t, y) - \xi_0(x - x_0))}
\]
for arbitrary fixed $(x_0, \xi_0)$, and prove the estimate for 
\[ G_1(t, x, \xi, y) = e^{-i\psi_{x_0, \xi_0}(t, y)}(S(t)e^{i\xi_0(x - x_0)}\phi_{x, \xi})(y) \]
at $(x, \xi) = (x_0, \xi_0)$. 

\

Next, we translate by $x_0^t$,
\[
G_2(t, x, \xi, y) =  G_1(t, x + x_0, \xi + \xi_0, y + x_0^t),
\]
so that it suffices to show
\[|y^\gamma \D_x^\alpha \D_\xi^\beta \D_y^\nu G_2(t, x, \xi, y)| \lesssim c_{\gamma, \alpha, \beta, \nu}\]
at $(x, \xi) = (0, 0)$. 

\

A direct computation shows that
\[
(i\D_t + \tilde a_2(t, y, y', D))G_2 = 0, \qquad G_2(0) = \phi_{x, \xi}
\]
where
\begin{equation*}
\begin{aligned}
\tilde a_2(t, y, y', \eta) &= - \eta \tilde a_\xi(t, x_0^t, \xi_0^t) - \tilde a\left(t, y + x_0^t, \frac{\psi_{x_0, \xi_0}(t, y + x_0^t) - \psi_{x_0, \xi_0}(t, y' + x_0^t)}{y - y'}\right)\\
&\quad + \tilde a\left(t, y + x_0^t, \eta + \frac{\psi_{x_0, \xi_0}(t, y + x_0^t) - \psi_{x_0, \xi_0}(t, y' + x_0^t)}{y - y'}\right).
\end{aligned}
\end{equation*}
Since differentiating $G_2$ in $(x, \xi)$ is given by differentiating the initial data and evolving to time $t$, and since the initial data is Schwartz, it now suffices to show that
\[
(i\D_t + \tilde a_2(t, y, y', D))v = 0, \qquad v(0) = v_0
\]
preserves Schwartz initial data $v_0$.

\

We first establish symbol properties and estimates for $\ta_2$. Observe that $\D_{y, y'} \tilde a_2 \in L^1S_{0, 0}^1$ by estimates on $\tilde a_x$ in Lemma~\ref{l:hamEst}, and the estimates on $\psi$ in Proposition~\ref{p:eikonaly}. 

The case $\D_{\eta} \tilde a_2$ is not a member of the same symbol class as $\D_{y, y'} \tilde a_2$ due to the transport term of $\ta_2$. Instead, decomposing
\begin{equation*}
\begin{aligned}
\tilde a_2(t, y, y', \eta) &= \tilde a_3(t, y, y', \eta) \\
&\quad +\eta\left(\tilde a_\xi\left(t, y + x_0^t, \frac{\psi_{x_0, \xi_0}(t, y + x_0^t) - \psi_{x_0, \xi_0}(t, y' + x_0^t)}{y - y'}\right) - \tilde a_\xi(t, x_0^t, \xi_0^t)\right) \\
&=: a_3(t, y, y', \eta) + a_4(t, y, y', \eta),
\end{aligned}
\end{equation*}
we see that $\D_{\eta} \tilde a_3 \in L^1 S_{0, 0}^1$ by using the same estimates of Lemma~\ref{l:hamEst} and Proposition~\ref{p:eikonaly}. On the other hand, 
\[
\|(\D_{\eta} \tilde a_4)(t, y, y', D)u\|_{L^1L^2} = \|(\D_{\eta} \tilde a_4)(t, y, y')u\|_{L^1L^2} \lesssim \|yu\|_{L^2}.
\]
We conclude that
\begin{equation}\label{a2bd}
\|(\D_{y, y', \eta} \tilde a_2)(t, y, y', D)u\|_{L^1L^2} \lesssim \|yu\|_{L^2} + \|u\|_{H^1}.
\end{equation}

\

To show $v$ is Schwartz, it suffices to establish energy estimates for $y^{\beta} \D_y^{\beta'} v$, which we obtain by induction on $\beta + \beta'$. The case $\beta + \beta' = 0$ is straightforward, using that the dispersive term is constant coefficient and the transport coefficient satisfies the bound $\|\tilde b_x\|_{L^1L^\infty} \lesssim 1$. 

For $\beta + \beta' = 1$, we have the following equations for $yv$ and $\D_yv$:
\begin{equation*}
\begin{aligned}
(i\D_t + \tilde a_2(t, y, y', D))(yv) &= -i(\D_\eta \tilde a_2)(t, y, y', D)v, \\
(i\D_t + \tilde a_2(t, y, y', D))(\D_y v) &= -i((\D_{y} + \D_{y'}) \tilde a_2)(t, y, y', D)v.
\end{aligned}
\end{equation*}
Using \eqref{a2bd} and Gronwall, we conclude that 
\[
\|yv(t)\|_{L^2} + \|\D_y v\|_{L^2} \lesssim \|yv_0\|_{L^2} + \|\D_y v_0\|_{L^2} + \|v_0\|_{L^2}.
\]
The cases of higher $\beta + \beta'$ then follow similarly by induction.

\end{proof}

\subsection{Dispersive estimates}

Using the representation formula of Theorem \ref{t:repformulaEik}, we prove the following dispersive estimate.

\begin{proposition}\label{p:transport-dispersive}
Let $m \in [2, 3]$, $\delta = \frac{2 - m}{2}$, and $a(t, x, \xi)$ be given by \eqref{transport-symbol} and satisfy \eqref{L1-symbol-b}, \eqref{Linfty-symbol-b}. Let $u_0$ have frequency support $\lambda$. Then the evolution operator $S(t, s)$ for $i\D_t + A^w$ satisfies the estimate
\[
\|S(t, s) u_0\|_{L^\infty_x} \lesssim \lambda^{\delta} |t - s|^{-\frac{1}{2}} \|u_0\|_{L^1}
\]
for all $t, s \in [0, 1]$.
\end{proposition}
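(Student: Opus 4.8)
The plan is to reduce the dispersive estimate to a stationary/nonstationary phase analysis of the kernel representation furnished by Theorem~\ref{t:repformulaEik}, after the rescaling \eqref{transport-symbol-scaled}. First I would fix $t, s \in [0,1]$ and, by time translation, assume $s = 0$. Then I would exploit the scaling: choosing $\tau = |t|$ (or, if $|t| \le \lambda^{-m}$, simply using the trivial bound $\|S(t,0)u_0\|_{L^\infty} \lesssim \lambda^{\frac12}\|u_0\|_{L^1}$ from Bernstein, which beats the claimed bound in that regime since $\lambda^{\frac12} \lesssim \lambda^\delta |t|^{-\frac12}$ when $|t| \lesssim \lambda^{-m}$), I reduce to estimating the rescaled evolution $S(\tilde t, 0)$ with $\tilde a$ as in \eqref{transport-symbol-scaled} on the unit time scale $\tilde t \approx 1$. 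Under this rescaling, the rescaled frequency is $\xi \approx \mu\lambda = (\tau\lambda^m)^{1/2}$, and the dispersive factor $\lambda^\delta |t|^{-1/2}$ becomes, after unwinding the $L^1 \to L^\infty$ scaling of the kernel, a clean $O(1)$ (i.e., $|t|^{-1/2}$ from the time rescaling, $\lambda^\delta$ from the spatial rescaling $\mu$, matching $\delta = \frac{2-m}{2}$). So it suffices to prove the unit-time bound $\|S(\tilde t, 0) v_0\|_{L^\infty_x} \lesssim \|v_0\|_{L^1}$ for $\tilde t \approx 1$ with $v_0$ at rescaled frequency $\mu\lambda$.

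Next I would plug the kernel formula \eqref{kForm} into $S(\tilde t, 0)v_0(y) = \int K(\tilde t, y, 0, \tilde y) v_0(\tilde y)\, d\tilde y$ and bound the kernel pointwise: it suffices to show
\[
|K(\tilde t, y, 0, \tilde y)| = \left| \int e^{i\psi_{x,\xi}(\tilde t, y)} e^{-i\psi_{x,\xi}(0, \tilde y)} G(\tilde t, x, \xi, y)\, \tilde G(0, x, \xi, \tilde y)\, dx\, d\xi \right| \lesssim 1
\]
uniformly in $y, \tilde y$. Recalling $\psi_{x,\xi}(0,\tilde y) = \xi(\tilde y - x)$ and $\tilde G(0,x,\xi,\tilde y) = e^{-\frac12(\tilde y - x)^2}$, the $x$-integral is rapidly convergent (Gaussian localization of $\tilde G$ together with the $(y - x^t)^\gamma$ weight on $G$ from Theorem~\ref{t:repformulaEik}), so the crux is the $\xi$-integral. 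Here I would apply stationary phase in $\xi$: the phase is $\Phi(\xi) = \psi_{x,\xi}(\tilde t, y) - \xi(\tilde y - x)$, with $\D_\xi \Phi = \D_\xi \psi_{x,\xi}(\tilde t, y) - (\tilde y - x)$, and the key input is $\D_\xi^2 \psi_{x,\xi}(\tilde t, y) \approx \D_\xi x^{\tilde t} \approx \tilde t \approx 1$, which is exactly Proposition~\ref{p:disp}(c) (combined with Proposition~\ref{p:eikonalx} for higher $\xi$-derivatives of $\psi$ being $O(1)$, so that the standard stationary phase asymptotic applies with no derivative loss). This gives the $\xi$-integral a gain of $|\D_\xi^2 \Phi|^{-1/2} \approx 1$ near the (unique, by the $O(1)$ lower bound on the Hessian) stationary point, and rapid decay away from it; the size of the $\xi$-support is $\approx \mu\lambda$, but the stationary phase gain and the localization conspire to leave $O(1)$ after also integrating the harmless $x$ variable. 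I would organize this as: (i) nonstationary phase / integration by parts in $\xi$ to localize to $|\D_\xi\psi_{x,\xi}(\tilde t,y) - (\tilde y - x)| \lesssim$ (something small relative to the frequency scale); (ii) on that localized region, one-dimensional stationary phase in $\xi$ using the $O(1)$ Hessian lower bound; (iii) integrate the rapidly decaying remainder in $x$.

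The main obstacle I anticipate is \textbf{making the stationary phase in $\xi$ quantitatively uniform}: one must check that all the higher $\xi$-derivatives of the full phase $\Phi$ and of the amplitude $G \tilde G$ are controlled uniformly in $(x, y, \tilde y)$ on the relevant (large, size $\approx \mu\lambda$) frequency window, so that the stationary phase remainder is genuinely lower order and does not reintroduce a power of $\mu\lambda$. This is where Propositions~\ref{p:eikonaly}, \ref{p:eikonalx} (bounding $\D_{y,x,\xi}^\alpha \psi$ for $|\alpha|\ge 2$) and the amplitude bounds of Theorem~\ref{t:repformulaEik} are essential, and where the earlier insistence on using the \emph{exact} eikonal phase rather than its linearization pays off — the linearized phase would fail to have the $O(1)$ Hessian with uniform higher-derivative control on the unit time scale, since the transport is nonperturbative. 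A secondary technical point is handling the regime $|t| \lesssim \lambda^{-m}$ separately via Bernstein as noted above, and checking the endpoint behavior at $m = 2$ (where $\delta = 0$, so the estimate should reduce to the standard Airy/KdV dispersive bound $|t|^{-1/2}$) and $m = 3$ (where $\delta = -1/2$). I would close by undoing the rescaling to recover the stated estimate for $S(t,s)$ on $[0,1]$.
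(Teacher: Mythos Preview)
Your reduction is exactly right and matches the paper: set $s=0$, dispose of $|t|\le\lambda^{-m}$ by Bernstein, rescale via \eqref{transport-symbol-scaled} with $\tau=t$ to reduce to $\|v(1)\|_{L^\infty}\lesssim\|v_0\|_{L^1}$ at rescaled frequency $\mu\lambda$, and insert the kernel formula \eqref{kForm}.

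Where you diverge from the paper is in the treatment of the $\xi$-integral. You propose stationary phase in $\xi$ on the eikonal phase $\Phi(\xi)=\psi_{x,\xi}(\tilde t,y)-\xi(\tilde y-x)$, relying on a claimed Hessian lower bound $\D_\xi^2\psi_{x,\xi}\approx\D_\xi x^{\tilde t}\approx 1$. That identification is not established anywhere in the paper: Proposition~\ref{p:eikonalx} only gives the \emph{upper} bound $|\D_\xi^2\psi_{x,\xi}|\lesssim 1$, and the relation between $\D_\xi^2\psi$ and $\D_\xi x^t$ is not the one you assert. Without the lower bound the stationary phase step is unjustified, and since the $\xi$-support has width $\mu\lambda\gg 1$, bounding the phase alone cannot close the estimate.

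The paper's argument is both simpler and avoids this issue: it discards the phase entirely and uses only the amplitude decay from Theorem~\ref{t:repformulaEik}, namely $|G(1,x,\xi,y)|\lesssim(1+|x^1-y|)^{-N}$. The dispersive gain then comes from the change of variables $\xi\mapsto x^1$, which is legitimate precisely because of Proposition~\ref{p:disp}(c), $\D_\xi x^1\approx 1$. This yields $\int_B|G(1,x,\xi,y)|\,d\xi\lesssim\int(1+|x^1-y|)^{-N}\,dx^1\lesssim 1$ directly. In other words, the spreading $\D_\xi x^t\approx t$ enters through the amplitude localization rather than through the phase Hessian; the wave packet parametrix has already absorbed the oscillatory work, so no further stationary phase is needed.
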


\begin{proof}
Without loss of generality let $s = 0$. When $t < \lambda^{-m}$, the estimate is immediate from Sobolev embedding, so we may fix $\tau \in [\lambda^{-m}, 1]$ and prove the estimate when $t = \tau$. Accordingly, we apply the scaling \eqref{transport-symbol-scaled}, and set
\[
v(t, y) = (S(t, 0) u_0)(\tau t, \mu y), \qquad v_0(y) = u_0(\mu y), \qquad \mu = \tau^{\frac12} \lambda^{-\delta}.
\]
It suffices to show
\[
\|v(1)\|_{L^\infty} \lesssim \|v_0\|_{L^1}.
\]

We apply the representation formula \eqref{kForm} of Theorem~\ref{t:repformulaEik},
\begin{equation*}
\begin{aligned}
v(t, y) &= \int e^{i\psi_{x, \xi}(t, y)} G(t, x, \xi, y) \cdot e^{-i\psi_{x, \xi}(0, \tilde y)}\tilde G(0, x, \xi, \tilde y) v_0(\tilde y) \, dx d\xi d\tilde{y} \\
&= \int e^{i\psi_{x, \xi}(t, y)} G(t, x, \xi, y) \cdot e^{-i\xi(\tilde y - x)} e^{-\half(\tilde{y} - x)^2} v_0(\tilde y) \, dx d\xi d\tilde{y}.
\end{aligned}
\end{equation*}
By the frequency support of $v_0$ in $B = \{|\xi| \approx \mu\lambda \}$, the contribution of the complement of $B$ to the integral is negligible, so it suffices to consider
$$\int\int_{B} |G(t, x, \xi, y)| \, d\xi \, e^{-\half(\tilde{y} - x)^2} |v_0(\tilde{y})| \, dx d\tilde{y} \lesssim \|v_0\|_{L^1}\sup_x \int_B |G(t, x, \xi, y)| \, d\xi.$$
It remains to show
$$\int_B |G(1, x, \xi, y)| \, d\xi \lesssim 1.$$
Given the bound for $G$ in Theorem \ref{t:repformulaEik}, this reduces to showing
$$\int_B (1 + |x^1 - y|)^{-N} \, d\xi \lesssim 1.$$
Using estimate $c)$ of Proposition~\ref{p:disp}, we may change variables to obtain
$$\int_B (1 + |x^1 - y|)^{-N} \, d\xi \lesssim \int (1 + |x^1 - y|)^{-N} \, dx^1 \lesssim 1$$
as desired.

\end{proof}

For the proof of the lateral Strichartz estimate, we also prove the following lateral dispersive estimate.

\begin{proposition}\label{p:transport-dispersive-2}
Let $m \in [2, 3]$, $\delta = \frac{2 - m}{2}$, and $a(t, x, \xi)$ be given by \eqref{transport-symbol} and satisfy \eqref{L1-symbol-b}, \eqref{Linfty-symbol-b}. Let $u_0$ have frequency support $\lambda$. Then the evolution operator $S(t, s)$ for $i\D_t + A^w$ satisfies the estimate
\[
|(S(t, s) u_0)(y)| \lesssim \lambda^{\half}  |t - s|^{-\half} \int |y - \tilde y|^{-\half} |u_0(\tilde y)| \, d\tilde y
\]
for all $t, s \in [0, 1]$.
\end{proposition}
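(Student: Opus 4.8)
The plan is to follow the proof of Proposition~\ref{p:transport-dispersive} closely, the only new ingredient being that we must retain the spatial localization of the wave packet parametrix about the light cone, rather than discard it by passing to a supremum in $x$. As in that proof, I would first reduce to $s = 0$ and $t = \tau \in [\lambda^{-m}, 1]$ by concatenation; the short-time range $\tau < \lambda^{-m}$ is handled directly, combining Bernstein's inequality with the (elementary, at this time scale) finite speed of propagation at frequency $\lambda$, which already produces a bound dominating $\lambda^{\frac12}\tau^{-\frac12}|y-\tilde y|^{-\frac12}$. For $\tau \in [\lambda^{-m}, 1]$ I would apply the rescaling \eqref{transport-symbol-scaled} and the $s = 0$ form of the representation formula \eqref{kForm} from Theorem~\ref{t:repformulaEik}, where the FBI frequency variable may be restricted to $B = \{|\xi| \approx \mu\lambda\}$ up to a negligible error.

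The heart of the matter is then a finite-speed-of-propagation estimate for the \emph{rescaled} kernel at rescaled time $1$, of the form $|K(1, y, 0, \tilde y)| \lesssim (1 + |y - \tilde y|)^{-N}$ for every $N$, valid once $|y - \tilde y| \gtrsim \mu\lambda$. To get this I would bound $K$ crudely, throwing away all oscillation:
\[
|K(1, y, 0, \tilde y)| \leq \int e^{-\frac12(\tilde y - x)^2} \int_B |G(1, x, \xi, y)|\, d\xi\, dx,
\]
then insert the decay $|G(1, x, \xi, y)| \lesssim (1 + |y - x^1(x, \xi)|)^{-N}$ from Theorem~\ref{t:repformulaEik} and change variables $\xi \mapsto z = x^1(x, \xi)$, which is admissible since $\xi \mapsto x^1(x, \xi)$ is monotone with $\D_\xi x^1 \approx 1$ by estimate $c)$ of Proposition~\ref{p:disp}. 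The inner integral becomes $\int_{J_x} (1 + |y - z|)^{-N}\, dz$ over the interval $J_x = \{x^1(x, \xi) : \xi \in B\}$, which by Lemma~\ref{l:hamEst} ($\dot x^t \approx \mu\lambda$ on $[0,1]$) has length $\approx \mu\lambda$ and lies at distance $\approx \mu\lambda$ from $x$. Hence this is $\lesssim (1 + \dist(y, J_x))^{-N}$, and since the Gaussian pins $|x - \tilde y| \lesssim 1 \ll \mu\lambda$, we have $\dist(y, J_x) \gtrsim |y - \tilde y|$ whenever $|y - \tilde y| \gtrsim \mu\lambda$; performing the $x$-integral gives the claim.

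Finally I would unscale this bound and pair it with the direct dispersive estimate of Proposition~\ref{p:transport-dispersive}, which supplies $|(S(t, s) u_0)(y)| \lesssim \lambda^\delta |t - s|^{-\frac12}$ uniformly in $y$. A short arithmetic case split then closes the proof: for $|y - \tilde y| \leq \lambda^{m-1}$ one checks directly from $\delta = \frac{2-m}{2}$ that $\lambda^\delta |t|^{-\frac12} \lesssim \lambda^{\frac12} |t|^{-\frac12} |y - \tilde y|^{-\frac12}$, while for $|y - \tilde y| > \lambda^{m-1}$ one is outside the unscaled light cone $|y - \tilde y| \lesssim |t|\lambda^{m-1}$, so the unscaled finite-speed bound applies and, using $|t| \le 1 \le \lambda^m$, beats $|y - \tilde y|^{-\frac12}$ (the comparison reduces to $(|t|\lambda^{-m})^{N/2} \lesssim 1$).

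I expect the genuinely delicate point to be not any individual estimate but the bookkeeping: carrying along the $x$-dependence that Proposition~\ref{p:transport-dispersive} was free to suppress under a supremum, and matching the scales $\mu\lambda$, $\lambda^{m-1}$ and $\lambda^\delta |t|^{-\frac12}$ through the rescaling so that the near and far regimes glue. All the hard analysis — the exact eikonal phase, the parametrix of Theorem~\ref{t:repformulaEik}, and the flow estimates of Lemma~\ref{l:hamEst} and Proposition~\ref{p:disp} — is already in place, and, importantly, no microlocal time subdivision is needed since we remain on the unit time scale.
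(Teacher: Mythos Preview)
Your proposal is correct and arrives at the estimate, but the route differs from the paper's. The paper fixes $\tau = 1$ in the rescaling \eqref{transport-symbol-scaled} (so only space is rescaled, by $\mu = \lambda^{-\delta}$), normalizes to $(t, y) = (0, 0)$, and then bounds the single phase-space integral
\[
\int \mu^{-1}\langle x\rangle^{-N}\langle \mu^{-1}\tilde y - x^s\rangle^{-N}\, dx\, d\xi
\]
directly by $\lambda^{1/2}s^{-1/2}|\tilde y|^{-1/2}$, identifying the worst case $|\mu^{-1}\tilde y| \approx |x^s| \approx s\mu\lambda$ and using the change of variables $\xi \mapsto x^s$ with Jacobian $\approx s$ from Proposition~\ref{p:disp}. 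There is no appeal to Proposition~\ref{p:transport-dispersive} and no separate short-time regime: with $\tau = 1$ the parametrix already covers all $s \in [0,1]$.

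By contrast, you rescale with $\tau = t$, extract a finite-propagation kernel bound outside the light cone of width $t\lambda^{m-1}$, and glue it to Proposition~\ref{p:transport-dispersive} via a case split at $|y - \tilde y| \approx \lambda^{m-1}$. This is more modular, since it recycles the earlier dispersive estimate, at the cost of the scale-matching bookkeeping you flag. Two small points deserve care: the split threshold $\lambda^{m-1}$ and the light-cone radius $t\lambda^{m-1}$ only agree up to constants when $t \approx 1$, so the split should sit at $C\lambda^{m-1}$ with $C$ chosen relative to the finite-speed constant; and your ``elementary'' short-time finite speed is not really a separate argument in the presence of the variable transport, but rather the parametrix at the boundary scale $\tau = \lambda^{-m}$ (where $\mu\lambda = 1$). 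Both are easily accommodated.
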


\begin{proof}
We apply the scaling \eqref{transport-symbol-scaled} with $\tau = 1$,
\[
v(t, y) = (S(t, s) u_0)(t, \mu y), \qquad v_0(y) = u_0(\mu y), \qquad \mu = \lambda^{-\delta}.
\]

Without loss of generality, we prove the estimate for $S(t, s) u_0$ at $(t, y) = (0, 0)$. Applying the representation formula \eqref{kForm} of Theorem~\ref{t:repformulaEik} for $v$, we have
\begin{equation*}
\begin{aligned}
v(0, 0) &= \int e^{i\psi_{x, \xi}(0, 0)} G(0, x, \xi, 0) \cdot e^{-i\psi_{x, \xi}(s, \tilde y)}\tilde G(s, x, \xi, \tilde y)v_0(\tilde y) \, dx d\xi d\tilde y
\end{aligned}
\end{equation*}
so that
\begin{equation*}
\begin{aligned}
|v(0, 0)| &\lesssim \int \langle x \rangle^{-N} \langle \tilde y - x^s \rangle^{-N} |v_0(\tilde y)| \, dx d\xi d\tilde y = \int \mu^{-1} \langle x \rangle^{-N} \langle \tilde \mu^{-1} y - x^s \rangle^{-N} \,  dx d\xi \cdot |u_0(\tilde y)| \, d\tilde y.
\end{aligned}
\end{equation*}

The inner integral is maximized with respect to $\tilde y$ when
\[
|\mu^{-1}\tilde y| \approx |x^s| \approx |s \cdot \dot x^s| \approx s \mu \lambda,
\]
and thus when
\[
\mu \approx \lambda^{-\half} s^{-\half} |\tilde y|^{\half}.
\]
Since we also have $\D_\xi x^s \approx s$, we estimate using a change of variables in $\xi$,
\[
\int \mu^{-1} \langle x \rangle^{-N} \langle \mu^{-1}\tilde y - x^s \rangle^{-N}  \, dx d\xi \lesssim \lambda^{\half} s^{-\half} |\tilde y|^{-\half},
\]
as desired.

\end{proof}

\subsection{Strichartz estimates} 

The proof of the Strichartz estimates \eqref{direct-strich} and \eqref{lat-strich} both use a classical $TT^*$ approach. Here we demonstrate the proof of \eqref{lat-strich}, as the former is more standard.

From Proposition~\ref{p:transport-dispersive-2}, we have
\[
|(S(t, s) u_0)(y)| \lesssim \lambda^{\half}  |t - s|^{-\half} \int |y - \tilde y|^{-\half} |u_0(\tilde y)| \, d\tilde y
\]
and thus
\[
\left\|\int_0^1 (S(t, s)F(s, \tilde y))(y) \, ds \right\|_{L_y^4 L_t^\infty} \lesssim \lambda^{\half} \left\| (|s|^{-\half} *_s |\tilde y|^{-\half} *_{\tilde y} F(s, \tilde y))(t, y) \right\|_{L_y^4 L_t^\infty}.
\]
Using Hardy-Littlewood-Sobolev twice, we have
\[
\left\|\int_0^1 (S(t, s)F(s, \tilde y))(y) \, ds \right\|_{L_y^4 L_t^\infty} \lesssim \|F\|_{L_y^{4/3} L_t^1}.
\]
Then applying H\"older's inequality,
\[
\left\|\int_0^1 (S(0, s) F(s, \tilde y)) \, ds \right\|_{L^2_y}^2 = \left|\int_0^1\int_0^1 \langle (S(t, s)F(s, \tilde y))(y), F(t, y) \rangle_y \, ds dt \right| \lesssim \lambda^{\half} \|F\|_{L^{4/3}_y L_t^1}^2,
\]
so that we obtain \eqref{direct-strich} by duality.

\section{Bilinear estimates}\label{s:bilinear}

Our objective in this section is to prove a bilinear estimate for a linear dispersive flow with transport. Similar to the Strichartz estimates, due to the variable coefficient transport term, we approach this with a physical space argument, in the form of a positive commutator argument.

Here, we denote the control parameters
\begin{equation}\label{control-parameters}
\AA(t) = \| \langle D \rangle^{\frac14 (1 - 3\alpha)} \phi(t)\|_{L_x^\infty}, \qquad \BB(t) = \| \langle D \rangle^{\half (1 - \alpha)} \phi(t)\|_{L_x^\infty}.
\end{equation}

\begin{proposition}\label{p:bilinear}
For smooth solutions $u_\lambda, v_\mu$ to
\begin{equation*}
\begin{aligned}
(\D_t -|D|^\alpha \D_x - \phi_{< \lambda'} \D_x) u_\lambda &= f_1, \\
(\D_t -|D|^\alpha \D_x - \phi_{< \mu'} \D_x) v_\mu &= f_2
\end{aligned}
\end{equation*}
on $I = [0, T]$, with frequency supports $\lambda \gg_\AA \mu \geq 1$ respectively, as well as low-frequency truncations $\lambda', \mu' \leq 1$, we have
\begin{equation}
\begin{aligned}
\|u_\lambda v_\mu\|_{L^2(I;L^2)} \lesssim_{\|\AA\|_{L^\infty_t}} \lambda^{-\frac{\alpha}{2}}(&\|u_\lambda\|_{L^\infty(I;L^2)}\|v_\mu\|_{L^\infty(I;L^2)} \\
&\quad + \|f_1\|_{L^1(I;L^2)}\|v_\mu\|_{L^\infty(I;L^2)}  + \|f_2\|_{L^1(I;L^2)}\|u_\lambda\|_{L^\infty(I;L^2)}).
\end{aligned}
\end{equation}
\end{proposition}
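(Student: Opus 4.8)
The plan is to run a positive commutator argument adapted to the variable transport. Set $w = u_\lambda \bar v_\mu$ (or $u_\lambda v_\mu$, everything being real here) and seek a pseudodifferential multiplier $Q$, depending on the frequency gap, so that the commutator $[i\partial_t + L_\lambda, Q]$ applied against $u_\lambda$ — where $L_\lambda = |D|^\alpha \partial_x + \phi_{<\lambda'}\partial_x$ — produces, after pairing with $u_\lambda$ and using the equation for $v_\mu$ as a frozen coefficient at frequency $\mu \ll \lambda$, a positive quantity controlling $\lambda^{\alpha}\|u_\lambda v_\mu\|_{L^2_{t,x}}^2$ on the left, with the energy fluxes $\|u_\lambda\|_{L^\infty L^2}^2$, $\|v_\mu\|_{L^\infty L^2}^2$ and the inhomogeneous contributions $\|f_1\|_{L^1L^2}\|v_\mu\|_{L^\infty L^2}$, $\|f_2\|_{L^1L^2}\|u_\lambda\|_{L^\infty L^2}$ on the right. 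The natural choice is $Q = q(D)$ with $q$ a bounded increasing function of $\xi$ that is $\approx \mathrm{sgn}(\xi)$ on the support of $u_\lambda$, so that the principal symbol of $i[L_\lambda, Q]$ is $\approx \partial_\xi(|\xi|^\alpha\xi)\, q'(\xi) \approx \lambda^{\alpha}\cdot(\text{bump at scale }\mu)$, which upon localizing $v_\mu$ to frequency $\mu$ yields the gain $\lambda^{-\alpha}$ after dividing through. Since $\mu \geq 1$ and $\lambda \gg_\AA \mu$, the frequency gap is genuine and the symbol calculus is clean.

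The steps, in order: (1) Reduce to $T = 1$ and to the homogeneous case by Duhamel, so that the $f_1, f_2$ terms enter through the standard energy estimate $\frac{d}{dt}\|u_\lambda\|_{L^2}^2 \lesssim \|f_1\|_{L^2}\|u_\lambda\|_{L^2}$, contributing the stated mixed norms; the transport term $\phi_{<\lambda'}\partial_x$ is antisymmetric up to $\|\partial_x \phi_{<\lambda'}\|_{L^\infty} \lesssim 1$ (absorbed into the implicit constant depending on $\|\AA\|_{L^\infty_t}$, using $\lambda' \leq 1$ and the definition \eqref{control-parameters}). (2) Write the density-flux identity: with $P(t) = \langle Q u_\lambda(t), u_\lambda(t)\rangle$, compute $\frac{d}{dt}P(t) = \langle i[L_\lambda, Q] u_\lambda, u_\lambda\rangle + (\text{terms with } f_1)$. (3) Expand $i[L_\lambda, Q]$: the constant-coefficient part $i[|D|^\alpha\partial_x, Q]$ is Fourier-diagonal with symbol $-\partial_\xi(|\xi|^\alpha\xi)q'(\xi) < 0$ (sign chosen so the flux is the good term), of size $\lambda^\alpha$ on the relevant frequency annulus when $q'$ is a bump of height $\mu^{-1}$ at frequency $\approx \mu$; the variable-coefficient part $i[\phi_{<\lambda'}\partial_x, Q]$ is an order-zero operator with symbol controlled by $\|\partial_x\phi_{<\lambda'}\|_{L^\infty}$, hence harmless. (4) Insert the frequency localization of $v_\mu$: test the good term against $u_\lambda \bar v_\mu$ by recognizing that $q'(D)u_\lambda$, modulated against $\bar v_\mu$, reconstructs $u_\lambda\bar v_\mu$ up to acceptable errors, so that $\int_0^1 \langle i[L_\lambda,Q]u_\lambda, u_\lambda\rangle\, dt \gtrsim \lambda^\alpha \|u_\lambda v_\mu\|_{L^2_{t,x}}^2 - (\text{errors})$. (5) Bound $|P(t)| \lesssim \|u_\lambda\|_{L^2}^2$ uniformly since $Q$ is $L^2$-bounded, integrate the density-flux identity over $[0,1]$, and rearrange to isolate $\|u_\lambda v_\mu\|_{L^2L^2}^2 \lesssim \lambda^{-\alpha}(\cdots)$, which after taking square roots is the claim.

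I expect the main obstacle to be step (4): correctly commuting the Fourier multiplier $q'(D)$ past the modulation by $\bar v_\mu$ to recover the bilinear quantity $\|u_\lambda v_\mu\|_{L^2_{t,x}}^2$, and controlling the error terms generated — both the commutator errors between $q'(D)$ and multiplication by $v_\mu$ (which require the frequency separation $\lambda \gg_\AA \mu$ and cost powers of $\mu/\lambda$, hence the $\gg_\AA$ threshold), and the lower-order pieces of the symbol expansion of $i[|D|^\alpha\partial_x, Q]$ away from the diagonal. A secondary technical point is handling the variable transport coefficients $\phi_{<\lambda'}$ and $\phi_{<\mu'}$ in the two equations: since $\lambda', \mu' \leq 1$, their spatial derivatives are $O(1)$ in $L^\infty$ by the definition of $\AA$ and Bernstein, so they contribute only to the implicit constant, but one must check this does not interfere with the positivity of the flux term — it does not, because these contributions are order zero while the good term is order $\alpha \geq 1 > 0$ after the frequency localization is accounted for.
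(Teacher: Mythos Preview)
Your step (3) contains a fatal error: if $Q = q(D)$ is a Fourier multiplier, then $[|D|^\alpha\partial_x, Q] = 0$ identically, since Fourier multipliers commute. The expression $-\partial_\xi(|\xi|^\alpha\xi)\,q'(\xi)$ that you write down is not the symbol of the commutator; it is (up to sign) the Poisson bracket of two $x$-independent symbols, which also vanishes. So the ``good term'' in your density--flux identity is exactly zero, and there is nothing to extract in step (4). Relatedly, the quantity $P(t) = \langle Q u_\lambda, u_\lambda\rangle$ never sees $v_\mu$ at all, so no amount of bookkeeping in step (4) can produce $\|u_\lambda v_\mu\|_{L^2_{t,x}}^2$ from it.

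The paper's argument is a positive commutator, but the multiplier is a \emph{physical-space} weight on the \emph{tensor product}. One sets $U(x_1,x_2) = u_\lambda(x_1)v_\mu(x_2)$ on $\R^2$ and chooses a bounded primitive $w$ with $w' = \chi^2$ Schwartz. Then $U$ solves a two-particle equation with dispersion $|D_{x_1}|^\alpha\partial_{x_1} + |D_{x_2}|^\alpha\partial_{x_2}$, and the commutator $[w(x_1-x_2),\,|D_{x_1}|^\alpha\partial_{x_1} + |D_{x_2}|^\alpha\partial_{x_2}]$ is now genuinely nonzero because $w(x_1-x_2)$ is a multiplication operator. Its principal part is $\chi^2(x_1-x_2)\bigl(p'(D_{x_1}) - p'(D_{x_2})\bigr)$ with $p(\xi)=|\xi|^\alpha\xi$; the frequency separation $\lambda \gg \mu$ makes this $\approx \lambda^\alpha\,\chi^2(x_1-x_2)$ after symmetrization, and one checks $\|u_\lambda v_\mu\|_{L^2_x}^2 \lesssim \|\chi(x_1-x_2)U\|_{L^2_{x_1,x_2}}^2$ via the kernel of $P_\mu$. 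The variable transport terms contribute $\langle (\nabla\cdot(w\phi))U,U\rangle$, which is order zero in the frequency and bounded using $\|\partial_x\phi_{\leq 0}\|_{L^\infty} \lesssim_\AA 1$ --- this is where the physical-space setup is essential, since the transport coefficients would not interact cleanly with a Fourier-side multiplier. Integrating $\frac{d}{dt}\langle w(x_1-x_2)U,U\rangle$ over $[0,T]$ then gives the claim.
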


\begin{proof}
Rescaling
\[ u_\lambda \mapsto u_\lambda(\mu^{-1 - \alpha} t, \mu^{-1} x), \quad v_\mu \mapsto v_\mu(\mu^{-1 - \alpha} t, \mu^{-1} x), \]
\[ \phi_{< \lambda'} \mapsto \mu^{-\alpha} \phi_{< \lambda'}(\mu^{-1 - \alpha} t, \mu^{-1} x), \quad \phi_{< \mu'} \mapsto \mu^{-\alpha} \phi_{< \mu'}(\mu^{-1 - \alpha} t, \mu^{-1} x), \]
we may assume $\mu = 1$. Further, we write $v = v_\mu$ and $u = u_\lambda$ for brevity. Then $v$ has frequency support  $\approx 1$ so that $v = P_0 v$. Then denoting the kernel of $P_0$ by $m_0$, and fixing Schwartz $\chi \in C^\infty(\R)$ with frequency support in $[-1, 1]$ and $\chi \geq 1$ on $[-1, 1]$, we have 
\begin{equation}\label{uvproduct}
\begin{aligned}
\|u v\|_{L^2_x}^2 &= \int |u(x_1)|^2 \left|\int m_0(x_1 - x_2) v(x_2)\, dx_2 \right|^2 \, dx_1 \\
&\lesssim \int |m_0(x_1 - x_2)|^2 \cdot |u(x_1)v(x_2)|^2\, dx_2  \, dx_1 \\
&\lesssim \int \chi^2(x_1 - x_2) |u(x_1) v(x_2)|^2\, dx_2  \, dx_1.
\end{aligned}
\end{equation}
Defining $U(x_1, x_2) = u(x_1) v(x_2)$, it thus suffices to bound the diagonal-weighted $L^2$ norm of $U$ on $\R^2$,
\[\| \chi(x_1 - x_2)U\|_{L^2_{x}}^2.\]

\

Define $\phi = (\phi_{< \lambda'}(x_1), \phi_{< \mu'}(x_2))$, $f = (f_1(x_1), f_2(x_2))$, and $\nabla = (\D_{x_1}, \D_{x_2})$. Then $U$ satisfies
\[ (\D_t - |D_{x_1}|^\alpha \D_{x_1} -  |D_{x_2}|^\alpha \D_{x_2} - \phi \cdot \nabla) U = f \cdot (v, u). \]
Let $w' = \chi^2$. Then we have, denoting $w = w(x_1 - x_2)$,
\begin{equation}\label{positivederivative}
\begin{aligned}
\frac{d}{dt} \langle w(x_1 - x_2) U, U\rangle &= \langle w U_t, U \rangle + \langle w U, U_t \rangle\\
&= \langle w\cdot (|D_{x_1}|^\alpha \D_{x_1} + |D_{x_2}|^\alpha \D_{x_2} + \phi\cdot \nabla) U, U \rangle + \langle wf \cdot (v, u), U \rangle\\
&\quad+ \langle w U, (|D_{x_1}|^\alpha \D_{x_1} + |D_{x_2}|^\alpha \D_{x_2} + \phi \cdot \nabla)U \rangle + \langle wU, f \cdot (v, u)\rangle\\
&= \langle [w, (|D_{x_1}|^\alpha \D_{x_1} + |D_{x_2}|^\alpha \D_{x_2})] U, U \rangle - \langle (\nabla \cdot (w\phi)) U, U \rangle \\
&\quad + 2\Re \langle wf \cdot (v, u), U \rangle \\
&= I + II + 2\Re \langle wf \cdot (v, u), U \rangle.
\end{aligned}
\end{equation}

Consider first $I$ on the right hand side of (\ref{positivederivative}). Set $p(D_{x_1}) = |D_{x_1}|^\alpha \D_{x_1}$ and write
\begin{equation*}
\begin{aligned}
{}[w(x_1 - x_2), p(D_{x_1})] &= i w'(x_1 - x_2) p'(D_{x_1}) + R(x_1, y_1, D)
\end{aligned}
\end{equation*}
where
\[ R(x_1, y_1, \xi) = p''(\xi_1) \int_0^1 w''(hx_1 + (1 - h)y_1 - x_2)h \, dh. \]
Using the frequency localization of $U$ at $\lambda$, we bound the contribution of $R$ by
\[|\langle RU, U \rangle| \lesssim \lambda^{\alpha - 1}|\langle \chi^2(x_1 - x_2) U, U \rangle| \ll \lambda^{\alpha}|\langle \chi^2(x_1 - x_2) U, U \rangle|. \]
Similarly, we may exchange the principal order term by its symmetrization. Precisely, we have $w' = \chi^2$ and write 
\begin{equation*}
\begin{aligned}
\chi^2(x_1 - x_2)p'(D_{x_1}) U &= \chi(x_1 - x_2)p'(D_{x_1})\chi(x_1 - x_2) U \\
&\quad + \chi(x_1 - x_2)[\chi(x_1 - x_2), p'(D_{x_1})] U
\end{aligned}
\end{equation*}
and bound the commutator error by
\[| \langle \chi(x_1 - x_2)[\chi(x_1 - x_2), p'(D_{x_1})] U , U \rangle| \ll \lambda^{\alpha}|\langle \chi^2(x_1 - x_2) U, U \rangle|. \]
The symmetrized principal term itself then satisfies
\[ \langle ip'(D_{x_1})\chi(x_1 - x_2) U , \chi(x_1 - x_2) U \rangle \approx \lambda^{\alpha}\| \chi(x_1 - x_2) U \|_{L^2}. \]

Similar estimates hold for the case $p(D_{x_2}) = |D_{x_2}|^\alpha \D_{x_2}$, though with opposite sign since $w = w(x_1 - x_2)$, and with principal term satisfying instead
\[ \langle ip'(D_{x_2})\chi(x_1 - x_2) U , \chi(x_1 - x_2) U \rangle \approx \| \chi(x_1 - x_2) U, U \|_{L^2} \ll \lambda^{\alpha}\| \chi(x_1 - x_2) U \|_{L^2}. \]
We conclude 
\[
I = (1 + o(1))\lambda^{\alpha} \| \chi(x_1 - x_2) U \|_{L^2}^2.
\]

\

Next, we bound $II$ on the right hand side of \eqref{positivederivative}. In the case where the derivative falls on $\phi$, we have
\[ |\langle w(\nabla \cdot \phi) U, U \rangle| \lesssim \|\D_x \phi\|_{L^\infty} \|U\|_{L^2}^2 \lesssim_\AA \|U\|_{L^2}^2. \]
In the case where the derivative falls on $w$, we have
\begin{equation*}
\begin{aligned}
| \langle w'(x_1 - x_2) (\phi_{< \lambda'}(x_1) - \phi_{< \mu'}(x_2)) U,  U \rangle | &\lesssim \| \phi_{\leq 0}\|_{L^\infty} |\langle \chi^2(x_1 - x_2) U, U \rangle| \\
& \lesssim_\AA \| \chi(x_1 - x_2) U \|_{L^2}^2.
\end{aligned}
\end{equation*}

\

We conclude from \eqref{positivederivative} and the analysis of $I$ and $II$ that
\begin{equation}\label{positivederivative2}
\begin{aligned}
\Big|\frac{d}{dt} \langle w(x_1 - x_2) U, U\rangle &- (1 + o(1))\lambda^{\alpha} \| \chi(x_1 - x_2) U \|_{L^2}^2\Big| \\
&\quad \lesssim_\AA \|U\|_{L^2}^2 + \|f \cdot (v, u)\|_{L^2} \|U\|_{L^2}.
\end{aligned}
\end{equation}
Integrating \eqref{positivederivative2} in $t$, we have
\begin{equation*}
\begin{aligned}
\lambda^{\alpha} \| \chi(x_1 - x_2) U \|_{L^2_{t, x}}^2 &\lesssim_{\|\AA\|_{L^\infty_t}} \|U\|_{L^\infty_t L^2_x}^2 + \|U(0)\|_{L^2}^2 + \|U(T)\|_{L^2}^2 + \|f \cdot (v, u)\|_{L^1_tL^2_x}  \|U\|_{L^\infty_t L^2_x}. 
\end{aligned}
\end{equation*}
Using \eqref{uvproduct}, we have 
\begin{equation*}
\begin{aligned}
  \| uv\|_{L^2_{t, x}}^2 \lesssim_{\|\AA\|_{L^\infty_t}} \lambda^{-\alpha}(&\|u\|_{L^\infty_t L^2_x}^2 \|v\|_{L^\infty_t L^2_x}^2 + \|u(0)\|_{L^2_x}^2\|v(0)\|_{L^2_x}^2 + \|u(T)\|_{L^2_x}^2\|v(T)\|_{L^2_x}^2 \\
  &\quad + ( \|f_1\|_{L^1_t L^2_x}\|v\|_{L^\infty_t L^2_x} + \|f_2\|_{L^1_t L^2_x}\|u\|_{L^\infty_t L^2_x})\|u\|_{L^\infty_t L^2_x} \|v\|_{L^\infty_t L^2_x}). 
\end{aligned}
\end{equation*}
which establishes the estimate of the proposition.

\end{proof}

\section{Normal form analysis}\label{s:nf}

Our objective in this section is to perform a normal form analysis to address the quadratic nonlinearity $\half \D_x(\phi^2)$ on the right hand side of \eqref{BO}. The analysis proceeds by decomposing this term into three components and treating each separately as follows.

\

To begin, we collect the paradifferential components of the nonlinear term which are immediately amenable to a normal form correction. Precisely, we define the bilinear form
\begin{equation}\label{Q2k-def}
Q^2_k(u, v) := P_k^+(u_{\geq k} \D_x v_{< k} ) + \half \D_x P_k^+(u_{\geq k} v_{\geq k} ) + [P_k^+, u_{< k}] \D_x v,
\end{equation}
where $P_k^+$ denotes the Littlewood-Paley projection further restricted to positive frequencies $[0, \infty)$. Observe that the quadratic terms in $Q^2_k$ are of order 0, in the sense that the high frequency variables are either undifferentiated or possess a commutator structure. In particular, we will see that they are directly amenable to removal by a normal form transformation.

Then we can rewrite \eqref{BO} as
\begin{align}\label{BO2}
(\D_t - |D|^\alpha \D_x)\phi_k^+ - \phi_{< k} \D_x \phi_k^+ &= Q_k^2(\phi, \phi).
\end{align}
In contrast to $Q^2_k$, the paradifferential quadratic term $\phi_{< k} \D_x \phi_k^+$ on the left hand side of \eqref{BO2} is of order 1, which is too unbalanced for a classical normal form to be effective. Instead, we first address this term using a pseudodifferential exponential conjugation, which we define later in this section. This suffices to remove most of the term $\phi_{< k} \D_x \phi_k^+$, but leaves two categories of residual non-perturbative terms: 
\begin{enumerate}[1)]
\item Order 0 and order $1 - \alpha$ quadratic terms. These, along with $Q_k^2$ discussed above, may be removed directly via paradifferential normal forms, now that they are lower order.
\item An order 1 term with a very low frequency coefficient,
\[
\phi_{\leq k'} \D_x \phi_k^+.
\]
Here, we denote $k' = \frac12(1 - \alpha)k$, where $2^{-k'} \gg 1$ is the unit-time spatial scale for a wave packet at frequency $2^k \gg 1$ under a dispersive flow of order $1 + \alpha$.

Unlike the special case of the Benjamin-Ono equation where the corresponding exponential conjugation is a simple product, the corresponding exponential conjugation here would be pseudodifferential and in particular unbounded. Instead, we may address this component by viewing it as part of the linear analysis, and using variable coefficient linear Strichartz and bilinear estimates established in Sections~\ref{s:transport-strich} and \ref{s:bilinear}.
\end{enumerate}

\subsection{Multi-linear notation}

We will use a convenient notation for describing multi-linear expressions of product type (see \cite{Ltao, ifrim2017well}). We let $L(\phi_1, ..., \phi_n)$ denote a translation-invariant expression of the form
\[
L(\phi_1, ..., \phi_n)(x) = \int K(y) \phi_1(x + y_1)... \phi_n(x + y_n) \, dy, \qquad K \in L^1.
\]
By $L_k$, we denote such expressions whose output is localized at frequency $2^k$.

The $L$ notation has several convenient features. For instance, it behaves well with respect to iteration,
\[
L(L(u, v), w) = L(u, v, w).
\]
We also have the usual H\"older's inequality,
\[
\|L(u, v) \|_{L^r} \lesssim \|u\|_{L^p} \|v\|_{L^q}, \qquad \frac{1}{p} + \frac{1}{q} = \frac{1}{r}.
\]
We can also apply bilinear estimates to $L$ forms. To see this, we need to account for the uncorrelated translations which appear within the $L$ form. We use the translation group $\{T_y\}_{y \in \R}$, 
\[
(T_y u)(x) = u(x + y),
\]
and estimate
\[
\|L(u, v) \|_{L^r} \lesssim \sup_{y}\|uT_y v\|_{L^r}.
\]
As a result, it will be necessary to state our bilinear estimates in the mildly generalized setting of these uncorrelated translations.

\subsection{Exponential conjugation} Define $\Phi(t, x)$ by
\begin{equation}\label{Phi-def}
\Phi_x = \phi, \qquad \Phi(0, 0) = 0,
\end{equation}
so that $\Phi$ solves the equation
\[ (\D_t - |D|^\alpha \D_x) \Phi = \half \Phi_x^2. \]

Then define the symbol
\begin{equation}\label{a-def}
a_{(k', k)}(t, y, \xi) = (1 + \alpha)^{-1} \Phi_{(k', k)}(y)\xi|\xi|^{-\alpha}
\end{equation}
and denote the operator with symbol $e^{ia_{(k', k)}(t, y, \xi)}$ by
\begin{equation}\label{a-def-op}
e^{iA_{(k', k)}} (t, y, D) = Op(e^{ia_{(k', k)} (t, y, \xi)}).
\end{equation}
Here, we fix a quantization with multiplication on the right, although this is inessential. We will often abbreviate \eqref{a-def} and \eqref{a-def-op} respectively as
\[
a = a_{(k', k)}(t, y, \xi), \qquad e^{iA} = e^{iA_{(k', k)}}(t, y, D).
\]

We will establish the $L^p$-boundedness of $e^{iA}$ in Section~\ref{p:expBd}. 

\

We define the exponentially conjugated variable,
\begin{equation}\label{psi-def}
\psi_k^+ := P^+_k e^{iA} \phi_k^+
\end{equation}
which satisfies 
\begin{equation}\label{expError}
\begin{aligned}
(\D_t - |D|^\alpha \D_x &- \phi_{\leq k'} \D_x) \psi_k^+ \\
&= P^+_k (e^{iA} (Q_k^2(\phi, \phi) + \phi_{(k', k)} \D_x \phi_k^+) + [(\D_t - |D|^\alpha \D_x) - \phi_{\leq k'} \D_x, e^{iA}] \phi_k^+).
\end{aligned}
\end{equation}

We observe in the following lemma that the principal term of the $\D_t - |D|^\alpha \D_x$ commutator cancels the first order quadratic term $\phi_{(k', k)} \D_x \phi_k^+$. The higher order terms produce source terms which are cubic or higher, or quadratic but better balanced:
\begin{lemma}\label{l:expCom}
For $f = P_kf$, we have
\begin{equation}\label{Dtcomm}
\begin{aligned}
[][(\D_t - |D|^\alpha \D_x), e^{iA}]f &= -e^{iA} (\phi_{(k', k)} \D_x f ) \\
&\quad - (1 + \alpha)^{-1}e^{iA} [(\D_x \phi_{(k', k)})f] \\
&\quad + (1 + \alpha)^{-1} |D|^{-\alpha} e^{iA}\D_x [(|D|^\alpha \phi_{(k', k)})f] \\
&\quad -\frac{\alpha(\alpha - 1)}{2(1 + \alpha)} \D_x |D|^{-\alpha}e^{iA}\int_0^1(1 - h) \\
&\quad \cdot\int e^{i(x - y)\xi} |\xi|^{\alpha - 2} (\D_x \phi_{(k', k)} )(y + h(x - y)) \D_xf(y) \, dy d\xi \, dh \\
&\quad + Q_k^{exp, 3+}(f)
\end{aligned}
\end{equation}
where $Q_k^{exp, 3+}$ consists of perturbative cubic terms,
\begin{equation*}
\begin{aligned}
&Q_k^{exp, 3+}(f) \\
&= (1 + \alpha)^{-2} |D|^{-2\alpha}\D_x[e^{iA} (\phi_{(k', k)} [|D|^\alpha \phi_{(k', k)} + \half P_{(k', k)} (\phi^2)] f)]  \\
&\quad + \half(1 + \alpha)^{-1}|D|^{-\alpha}  e^{iA} \D_x [P_{(k', k)} (\phi^2) f] -(1 + \alpha)^{-2} |D|^{-\alpha}\D_x[ e^{iA} ( \phi_{(k', k)}^2 f)] \\
&\quad+ \frac{\alpha (\alpha - 1)}{2(1 + \alpha)^2}|D|^{2 - 2\alpha}\int_0^1\int_0^1\int e^{i(x - z)\eta} e^{i(z - y)\xi} \D_xf(y)\\
&\quad \cdot \Big[(1 - h)e^{ia(y + h(z - y), \eta)}|\xi|^{\alpha - 2} \phi_{(k', k)}^2(y + h(z - y)) \\
&\quad\phantom{\cdot \Big[ } -i (\alpha - 2)(1 - h)^2 e^{ia(z + h'(1 - h) (y - z), \eta)}  |\xi|^{\alpha - 4}\xi \cdot \phi_{(k', k)}(z + h'(1 - h) (y - z)) \\
&\quad\phantom{\cdot \Big[ + (1 - h)^2 e^{ia(z + h'(1 - h) (y - z), \eta)} (y - z)} \cdot \D_x \phi_{(k', k)} (y + h(z - y))\Big]  \, dy d\xi dz d\eta \, dh' dh.
\end{aligned}
\end{equation*}
\end{lemma}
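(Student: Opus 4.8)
The plan is to compute the commutator $[(\D_t - |D|^\alpha \D_x), e^{iA}]$ directly, exploiting the definition of $a = a_{(k',k)}$ in \eqref{a-def} and the transport equation $(\D_t - |D|^\alpha \D_x)\Phi = \half \Phi_x^2$ satisfied by $\Phi$. First I would split the commutator as
\[
[(\D_t - |D|^\alpha \D_x), e^{iA}]f = [\D_t, e^{iA}]f - [|D|^\alpha \D_x, e^{iA}]f.
\]
The first piece is easy: $[\D_t, e^{iA}]f = (\D_t e^{iA})f = (i \D_t a)\, e^{iA} f$ in the appropriate quantization, and $\D_t a = (1+\alpha)^{-1} (\D_t \Phi_{(k',k)})\, \xi |\xi|^{-\alpha}$. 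Using the $\Phi$ equation to substitute $\D_t \Phi_{(k',k)} = P_{(k',k)}(|D|^\alpha \D_x \Phi + \half \Phi_x^2) = |D|^\alpha \phi_{(k',k)} + \half P_{(k',k)}(\phi^2)$, one obtains the term $(1+\alpha)^{-1}|D|^{-\alpha} e^{iA} \D_x[(|D|^\alpha \phi_{(k',k)})f]$ (third line of \eqref{Dtcomm}) together with the purely cubic contribution $\half(1+\alpha)^{-1}|D|^{-\alpha} e^{iA}\D_x[P_{(k',k)}(\phi^2)f]$ in $Q_k^{exp,3+}$, modulo commutators between the multiplier $|D|^{-\alpha}\D_x$ and the exponential, which I would track carefully since they feed back into $Q_k^{exp,3+}$.

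The bulk of the work is the spatial piece $[|D|^\alpha \D_x, e^{iA}]f$. I would write $e^{iA}$ as a pseudodifferential operator with symbol $e^{ia(t,y,\xi)}$ (right quantization), expand $|D|^\alpha \D_x (e^{iA} f)$ via the composition formula, and organize the output by order: the leading symbol contribution comes from the $\xi$-gradient of $e^{ia}$ hitting the outer multiplier, producing $-i \D_\xi(\xi|\xi|^\alpha) \cdot \D_y(e^{ia}) f$. Since $\D_y a = (1+\alpha)^{-1}(\D_x \Phi_{(k',k)}) \xi|\xi|^{-\alpha} = (1+\alpha)^{-1}\phi_{(k',k)} \xi |\xi|^{-\alpha}$ and $\D_\xi(\xi|\xi|^\alpha) = (1+\alpha)|\xi|^\alpha$, this principal term is exactly $-e^{iA}(\phi_{(k',k)} \D_x f)$ — the promised cancellation of the order-1 paradifferential term — up to reordering $e^{iA}$ past $\phi_{(k',k)}$. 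The subleading terms come from (i) one $\D_\xi$ on $e^{ia}$ but the remaining derivative landing via the second-order term in the symbol expansion, giving the order-$(1-\alpha)$ terms on lines two and three/four of \eqref{Dtcomm} after using $|D|^\alpha$-calculus; and (ii) the genuinely nonlocal remainder, which I would keep in exact integral (Taylor-with-remainder) form rather than expanding asymptotically — this is the source of the explicit oscillatory integral on lines four and five, with the $\int_0^1 (1-h)\,dh$ coming from the second-order Taylor remainder of the symbol and the $|\xi|^{\alpha-2}$ factor from two $\xi$-derivatives of $\xi|\xi|^\alpha$. All terms that are quadratic in $\phi_{(k',k)}$ (arising when $\D_y$ hits $e^{ia}$ twice, or from iterating the remainder) collect into $Q_k^{exp,3+}$; these are genuinely cubic in $\phi$ and hence the double Taylor integral with parameters $h, h'$ appears.

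The main obstacle is bookkeeping: keeping the quantization conventions consistent when commuting the fixed-right-quantized $e^{iA}$ past the multipliers $|D|^\alpha$, $\D_x$, and $|D|^{-\alpha}$, and making sure every error generated is either (a) absorbed into one of the five displayed lower-order terms, or (b) genuinely cubic-or-higher and hence placed in $Q_k^{exp,3+}$, with no order-1 remainder left over. I would handle this by never expanding the symbol further than second order in $\D_\xi$ (since $\D_\xi^2(\xi|\xi|^\alpha) \sim |\xi|^{\alpha-2}$ already gains a full derivative relative to order $1+\alpha$, and $\D_\xi^3$ is controlled by the same structure), always writing the second-order remainder as an exact integral, and grouping by the number of factors of $\phi_{(k',k)}$ (equivalently, powers of $a$) that appear. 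Since $f = P_k f$ is frequency-localized at $2^k$ while $\phi_{(k',k)}$ lives at frequencies $\lesssim 2^k$, no high-high resonances complicate the symbol calculus, and the identity \eqref{Dtcomm} follows by matching the terms produced in this expansion line by line against the claimed right-hand side.
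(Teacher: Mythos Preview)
Your strategy is essentially the same as the paper's: use the equation for $\Phi$ on the time piece, and an exact second-order Taylor (integral-remainder) expansion for the spatial symbolic calculus, collecting everything quadratic in $\phi_{(k',k)}$ into $Q_k^{exp,3+}$. Two points of comparison are worth noting.

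First, the paper decomposes into \emph{three} pieces rather than your two:
\[
[\D_t - |D|^\alpha \D_x, e^{iA}]f = (\D_t e^{iA})f \;-\; |D|^\alpha (\D_x e^{iA})f \;-\; [|D|^\alpha, e^{iA}]\,\D_x f.
\]
Peeling off $|D|^\alpha(\D_x e^{iA})f$ separately and expanding it by iterated Leibniz yields line two of \eqref{Dtcomm} and a $(1+\alpha)^{-1}$ fraction of line one \emph{exactly}, with no symbolic-calculus remainder. Only the genuine commutator $[|D|^\alpha,e^{iA}]\D_x f$ is then treated by the principal-symbol-plus-integral-remainder identity, and its principal term supplies the remaining $\alpha(1+\alpha)^{-1}$ share of line one while the remainder gives lines four--five. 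Your attribution of line two to ``the second-order term in the symbol expansion'' is slightly off; in the paper's organization it comes from exact Leibniz on $\D_x e^{iA}$.

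Second, your worry about ``commutators between the multiplier $|D|^{-\alpha}\D_x$ and the exponential'' in the time piece is unfounded. With the right quantization fixed in the paper, the identity
\[
(\D_t e^{iA})f = (1+\alpha)^{-1}\,|D|^{-\alpha}\D_x\bigl[e^{iA}\bigl((\D_t\Phi_{(k',k)})\,f\bigr)\bigr]
\]
is \emph{exact}: a symbol $m(\xi)\,b(y)\,e^{ia(y,\xi)}$ quantizes to $m(D)\circ e^{iA}\circ(b\,\cdot\,)$ with no error. The same mechanism is what makes the iterated Leibniz computation of $|D|^\alpha(\D_x e^{iA})f$ terminate cleanly. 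So there is nothing extra to ``track carefully'' there; the only place a genuine remainder appears is in $[|D|^\alpha,e^{iA}]$, and there the paper (as you propose) keeps it in exact integral form and then commutes $e^{iA}$ back to the front via a first-order Taylor expansion in $h'$ to separate the last quadratic term from the cubic residue.
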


\begin{proof}
We divide the commutator into three components and handle each separately:
\begin{equation}\label{expComm}
\begin{aligned}
{}[\D_t - |D|^\alpha \D_x, e^{iA}]f &= (\D_{t} e^{iA})f - |D|^\alpha (\D_x e^{iA})f - [|D|^\alpha, e^{iA}] \D_x f.
\end{aligned}
\end{equation}

\

We begin with the first term on the right hand side of \eqref{expComm}. We have
\[(\D_{t} e^{iA}) f = (1 + \alpha)^{-1} |D|^{-\alpha} \D_x [e^{iA} ([\D_{t} \phi_{(k', k)}] f)]. \]
Applying the Leibniz rule and using the equation to replace time derivatives, we find
\begin{align*}
\D_x [e^{iA} ([\D_{t} \phi_{(k', k)}] f)] &= (1 + \alpha)^{-1} |D|^{-\alpha}\D_x [e^{iA} (\phi_{(k', k)} [|D|^\alpha \phi_{(k', k)} + \half P_{(k', k)} (\phi^2)] f)] \\
&\quad + e^{iA} \D_x ([|D|^\alpha \phi_{(k', k)} + \half P_{(k', k)} (\phi^2)] f).
\end{align*}
Observe that all the resulting terms are cubic or higher order perturbative terms, except for a single quadratic term on the second line. We see that these terms contribute the third quadratic term on the right hand side of \eqref{Dtcomm} and several perturbative terms in $Q_k^{exp, 3+}(f)$.

\

Next, we compute the second term on the right hand side of \eqref{expComm}, applying the Leibniz rule:
\begin{align*}
- |D|^\alpha (\D_x e^{iA})f &= - (1 + \alpha)^{-1} \D_x [e^{iA} ( \phi_{(k', k)} f)] \\
&= -(1 + \alpha)^{-2} |D|^{-\alpha}\D_x[ e^{iA} ( \phi_{(k', k)}^2 f)] - (1 + \alpha)^{-1} e^{iA} \D_x( \phi_{(k', k)} f) \\
&= -(1 + \alpha)^{-2} |D|^{-\alpha}\D_x[ e^{iA} ( \phi_{(k', k)}^2 f)] \\
&\quad - (1 + \alpha)^{-1} e^{iA} ( (\D_x \phi_{(k', k)}) f) - (1 + \alpha)^{-1} e^{iA} ( \phi_{(k', k)} \D_x f).
\end{align*}
The last term contributes part (in the sense that the constant in front is only $(1 + \alpha)^{-1} < 1$) of the first quadratic term on the right hand side of \eqref{Dtcomm}. The other terms contribute the second quadratic term on the right hand side of \eqref{Dtcomm}, and one term in $Q_k^{exp, 3+}(f)$.

\

Lastly, we rewrite the third term on the right hand side of \eqref{expComm}. Precisely, we use the following identity, expanding the commutator into its principal symbol with second order remainder: 
\begin{align*}
[&P(y, D), Q(D)]g(x) = i\D_\eta Q(D) (\D_y P)(y, D)]g(x) \\
& -\half \int e^{i(x - z)\eta}e^{i(z - y)\xi} \left[\int_0^1 (1 - h)(\D_y^2 P)(y + h(z - y), \eta) \, dh \right]  \D_\xi^2 Q(\xi) g(y) \, dy d\xi dz d\eta.
\end{align*}

In our setting, the principal term of the commutator has the form
\[\alpha |D|^{\alpha - 1} [(\D_x e^{iA}) \D_x f] = - \alpha (1 + \alpha)^{-1} e^{iA}(\phi_{(k', k)} \D_x f), \]
which is precisely the remaining part of the first quadratic term on the right hand side of \eqref{Dtcomm}. Then using the remainder identity, we have 
\begin{equation*}
\begin{aligned}
-& [|D|^\alpha, e^{iA}] \D_xf + \alpha (1 + \alpha)^{-1} e^{iA}(\phi_{(k', k)} \D_x f) \\
&= -\half \int e^{i(x - z)\eta}e^{i(z - y)\xi} \left[\int_0^1 (1 - h) (\D_x^2 e^{ia})(y + h(z - y), \eta) \, dh \right]  (\D_\xi^2 |\xi|^{\alpha}) \D_xf(y) \, dy d\xi dz d\eta \\
&= -\half \int e^{i(x - z)\eta} e^{i(z - y)\xi} \\
&\quad \phantom{-\half \int}\cdot\int_0^1(1 - h) e^{ia(y + h(z - y), \eta)}[-(1 + \alpha)^{-2}|\eta|^{2 -2\alpha} \phi_{(k', k)}^2(y + h(z - y)) \\
&\quad \phantom{-\half \int\int_0^1(1 - h) e^{iA(y + h(z - y), \eta)}(}+ i(1 + \alpha)^{-1}\eta |\eta|^{-\alpha} \D_x \phi_{(k', k)} (y + h(z - y))] \, dh \\
&\quad \phantom{-\half\int}\cdot \alpha (\alpha - 1)|\xi|^{\alpha - 2} \D_xf(y) \, dy d\xi dz d\eta.
\end{aligned}
\end{equation*}
On the right, we have a cubic perturbative term, but also a quadratic term which will later require a normal form correction. In preparation, we commute the exponential to the front, using the identity
\begin{align*}
e^{ia(y + h(z - y), \eta)} &= e^{ia(z, \eta)} + (1 + \alpha)^{-1}\int_{0}^{1}(1 - h) (y - z) i\eta |\eta|^{-\alpha} \phi_{(k', k)}(z + h'(1 - h) (y - z)) \\
&\quad \phantom{ e^{ia(z, \eta)} + (1 + \alpha)^{-1}\int_{0}^{1} }\cdot e^{ia(z + h'(1 - h) (y - z), \eta)} \, dh'
\end{align*}
to obtain and write
\begin{equation*}
\begin{aligned}
-& [|D|^\alpha, e^{iA}] \D_xf + \alpha (1 + \alpha)^{-1} e^{iA}(\phi_{(k', k)} \D_x f) \\
&= \frac{\alpha (\alpha - 1)}{2(1 + \alpha)^2}\int_0^1\int_0^1\int e^{i(x - z)\eta} e^{i(z - y)\xi}|\xi|^{\alpha - 2} \D_xf(y)|\eta|^{2 -2\alpha} \\
&\quad \cdot \Big[(1 - h)e^{ia(y + h(z - y), \eta)} \phi_{(k', k)}^2(y + h(z - y)) \\
&\quad\phantom{\cdot \Big[ } + (1 - h)^2 e^{ia(z + h'(1 - h) (y - z), \eta)} (y - z)\phi_{(k', k)}(z + h'(1 - h) (y - z)) \\
&\quad\phantom{\cdot \Big[ + (1 - h)^2 e^{ia(z + h'(1 - h) (y - z), \eta)} (y - z)} \cdot \D_x \phi_{(k', k)} (y + h(z - y))\Big]  \, dy d\xi dz d\eta \, dh' dh\\
&\quad -\frac{\alpha (\alpha - 1)}{2(1 + \alpha)} \D_x |D|^{-\alpha}e^{iA}\int_0^1(1 - h)\int e^{i(x - y)\xi} |\xi|^{\alpha - 2}( \D_x \phi_{(k', k)}) (y + h(x - y)) \D_xf(y) \, dy d\xi  \, dh.
\end{aligned}
\end{equation*}
Integrating by parts in the second term on the right with respect to $\xi$, we obtain the remaining two perturbative cubic terms in $Q_k^{exp, 3+}(f)$, followed by the remaining quadratic term.
\end{proof}

\subsection{Bounds on the exponential conjugation} We consider the $L^p$-boundedness properties of the exponential conjugation $e^{iA}$ given by \eqref{a-def-op}. Recall that we denote the control parameters \eqref{control-parameters} by
\begin{equation*}
\AA(t) = \| \langle D \rangle^{\frac14 (1 - 3\alpha)} \phi(t)\|_{L_x^\infty}, \qquad \BB(t) = \| \langle D \rangle^{\half (1 - \alpha)} \phi(t)\|_{L_x^\infty}.
\end{equation*}

\begin{proposition}\label{p:expBd}
Let $f = P_kf$. We may estimate
\[
|e^{iA}f| \lesssim_\AA L(|f|).
\]
In particular, we have the $L^p$ bounds for $p \in [1, \infty]$,
\[ \|e^{iA} f \|_{L^p} \approx_\AA \|f\|_{L^p},\]
and may also estimate
\[
|e^{iA}f \cdot g| \lesssim_\AA L(|f|, |g|).
\]

\end{proposition}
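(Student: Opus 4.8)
The substantive statement is the pointwise bound $|e^{iA}f| \lesssim_\AA L(|f|)$; granting it, the $L^p$ bound $\|e^{iA}f\|_{L^p} \lesssim_\AA \|f\|_{L^p}$ is just Young's inequality, the reverse bound follows from the same estimate applied to $e^{-iA}$, and the product estimate is immediate. So the whole proof reduces to showing that, when acting on functions with frequency support $\approx 2^k$, the operator $e^{iA}$ has a Schwartz kernel dominated by a translation-invariant $L^1$ kernel whose $L^1$ norm is controlled by $\AA$ alone, \emph{uniformly in $k$}.

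\textbf{Reduction to a symbol estimate.} Since $f = P_k f$, I would insert a fattened Littlewood--Paley cutoff, writing $e^{iA}f = Op(e^{ia}\tilde\chi_k)f$ with $\tilde\chi_k$ a smooth bump supported on $\{|\xi| \approx 2^k\}$ equal to $1$ on $\operatorname{supp}\chi_k$, and then rescale to unit frequency via $z = 2^k x$, $\eta = 2^{-k}\xi$. The operator becomes $Op(\sigma)$ with
\[
\sigma(z, \eta) = e^{i\theta(z,\eta)}\tilde\chi(\eta), \qquad \theta(z, \eta) = (1 + \alpha)^{-1}\Phi_{(k', k)}(2^{-k}z)\, 2^{k(1 - \alpha)}\eta|\eta|^{-\alpha},
\]
where $\tilde\chi$ is a fixed bump on $\{|\eta| \approx 1\}$. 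The key step is the uniform symbol bound
\[
|\D_z^\beta \D_\eta^\gamma \theta(z, \eta)| \lesssim_\AA
\begin{cases}
1, & \beta = 0, \\
2^{-\frac14(1 + \alpha)k}, & \beta \geq 1,
\end{cases}
\qquad |\eta| \approx 1 ,
\]
uniformly in $k \geq 0$ and $t$. This is a Littlewood--Paley summation: $\D_z^\beta\theta$ involves $\D_x^{\beta-1}\phi_{(k',k)}$ (for $\beta \geq 1$) or $\Phi_{(k',k)}$ (for $\beta = 0$), whose $L^\infty$ norm I bound by summing the dyadic estimates $\|P_j\phi\|_{L^\infty} \lesssim_\AA 2^{j\cdot\frac14(3\alpha-1)}$ (for $j \geq 0$) over the band $k' \leq j \leq k$, against the scaling factors $2^{-k\beta}\cdot 2^{k(1-\alpha)}$; the exponents close precisely because $\alpha \in [1, 2]$ and $k' = \frac12(1 - \alpha)k$ is chosen to balance the low-frequency endpoint $j = k'$ of the band. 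Consequently $\sigma \in S^0_{0,0}$ with seminorms $\lesssim_\AA 1$ uniformly in $k$, with the additional feature that each $z$-derivative gains a factor $2^{-\frac14(1+\alpha)k}$, i.e.\ $\sigma$ is essentially $z$-independent at the unit scale.

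\textbf{Kernel bound and conclusion.} Given this, the kernel $\mathcal K(z, z') = \int e^{i(z-z')\eta}\sigma(z', \eta)\, d\eta$ satisfies, by integrating by parts in $\eta$ over $\{|\eta| \approx 1\}$, the bound $|\mathcal K(z, z')| \lesssim_{\AA, N} (1 + |z - z'|)^{-N}$ for every $N$. Undoing the rescaling gives the kernel $K(x, x') = 2^k \mathcal K(2^k x, 2^k x')$ of $e^{iA}$ on frequency $2^k$, hence $|K(x, x')| \leq \tilde K(x - x')$ with $\tilde K(u) = C_N(\AA)\, 2^k(1 + 2^k|u|)^{-N}$ and $\|\tilde K\|_{L^1} \lesssim_\AA 1$. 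This is exactly $|e^{iA}f| \lesssim_\AA L(|f|)$. From here $\|e^{iA}f\|_{L^p} \lesssim_\AA \|f\|_{L^p}$ follows by Young; the reverse inequality follows by applying the bound to $e^{-iA}$ together with the symbolic calculus identity $e^{-iA}e^{iA} = I + R$, where $R = Op(r)$ with $\|r\|_{S^0_{0,0}} \lesssim_\AA 2^{-\frac14(1+\alpha)k}$ by the gain in $z$-derivatives recorded above, so $I + R$ is invertible on $L^p$ with bound $\lesssim_\AA 1$ for $k$ large, the finitely many remaining low frequencies being handled directly. Finally, $|e^{iA}f \cdot g| = |e^{iA}f|\,|g| \leq (\tilde K * |f|)\cdot |g|$, which is manifestly of the form $L(|f|, |g|)$ with $L^1$-mass $\lesssim_\AA 1$.

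\textbf{Main obstacle.} The one real point is the uniform-in-$k$ symbol estimate: one must check that summing the dyadic pieces of $\Phi_{(k',k)}$ and its derivatives against the negative power $2^{k(1-\alpha)}$ coming from $|\xi|^{1-\alpha}$ on the shell $|\xi| \approx 2^k$ genuinely yields bounds independent of $k$ (indeed decaying in $k$ for the $z$-derivatives) throughout $\alpha \in [1,2]$. This is the only place the specific structure of the equation enters --- it is precisely what forces the choice $k' = \frac12(1-\alpha)k$ --- and everything downstream (the integration by parts for the kernel, Young's inequality, and the $L$-form bookkeeping) is routine.
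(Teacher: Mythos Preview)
Your proof is correct and follows essentially the same approach as the paper: both arguments reduce to a kernel bound obtained by integrating by parts in $\xi$, resting on the symbol estimate $\|\Phi_{(k',k)}\|_{L^\infty}\,|\xi|^{1-\alpha} \lesssim_\AA 1$ on the shell $|\xi|\approx 2^k$. The paper works directly at frequency $2^k$ and only integrates by parts twice (obtaining $|K|\lesssim 2^k(1+2^k|x-y|)^{-2}$), whereas you rescale to unit frequency and record the full $S^0_{0,0}$ seminorm bounds together with the extra $2^{-\frac14(1+\alpha)k}$ gain from $z$-derivatives; you are also more explicit about the reverse $L^p$ bound via $e^{-iA}e^{iA}=I+R$, which the paper leaves implicit. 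These are cosmetic differences rather than a genuinely different route.
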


\begin{proof}

We write
\[
K(x, y) = \int e^{i(x - y)\xi} e^{ia} \, d\xi
\]
so that
\[
e^{iA} f = \int K(x, y) f(y) \, dy.
\]
Since $f$ is dyadically localized at frequency $2^k$, we have the crude bound
\begin{equation}\label{K0}
|K| \lesssim 2^k.
\end{equation}

For decay away from the diagonal, we integrate by parts,
\begin{equation*}\begin{aligned}
(x - y) \cdot K = \int e^{i(x - y)\xi} \cdot i\D_\xi e^{ia} \, d\xi = -\int e^{i(x - y)\xi} \cdot \D_\xi a \cdot e^{ia} \, d\xi
\end{aligned}
\end{equation*}
where
\[
\D_\xi a_{(k', k)}(t, y, \xi) = \frac{1 - \alpha}{1 + \alpha} \Phi_{(k', k)}(y)|\xi|^{-\alpha}.
\]

To estimate $\Phi_{(k', k)}$, we have
\[
\|\Phi_{(k', k)} \|_{L^\infty} \lesssim \| \Phi_{(k', 0)} \|_{L^\infty} + \sum_{0 \leq j < k} \|\Phi_{j} \|_{L^\infty} \lesssim_\AA 2^{-k'} + \sum_{0 \leq j < k} 2^{-j} 2^{-\frac14 (1 - 3\alpha)j} \lesssim 2^{-k'} + 2^{\frac14(3\alpha - 5)k}.
\]
Since $\alpha \in [1, 2]$ and in particular $\alpha < 3$, we have $\frac14(3\alpha - 5)k < \half (\alpha - 1) k = -k'$ and thus
\[
\|\Phi_{(k', k)} \|_{L^\infty} \lesssim_\AA 2^{-k'}.
\]

As a result, for $|\xi| \approx 2^k$, we conclude
\[
|\D_\xi a| \lesssim_\AA 2^{-k'} 2^{-\alpha k} = 2^{-k},
\]
and thus 
\[
|x - y| \cdot |K| \lesssim_\AA 1.
\]

We integrate by parts once more, to write
\begin{equation*}\begin{aligned}
(x - y)^2 \cdot K = \int e^{i(x - y)\xi} \cdot (i\D_\xi)^2 e^{ia} \, d\xi = -i\int e^{i(x - y)\xi} \cdot (\D_\xi^2 a \cdot e^{ia} + i(\D_\xi a)^2 \cdot e^{ia}) \, d\xi.
\end{aligned}
\end{equation*}
Since
\[
\D_\xi^2 a_{(k', k)}(t, y, \xi) = \frac{-\alpha(1 - \alpha)}{1 + \alpha} \Phi_{(k', k)}(y)\xi|\xi|^{-\alpha - 2},
\]
we have
\[
|\D_\xi^2 a + i (\D_\xi a)^2| \lesssim_\AA 2^{-2k}
\]
so that
\[
|x - y|^2 \cdot |K| \lesssim_\AA 2^{-k}.
\]
Using this estimate for $|x - y| \geq 2^{-k}$ and \eqref{K0} for $|x - y|  \leq 2^{-k}$, we conclude
\[
|K| \lesssim_\AA \langle x - y \rangle^{-2}.
\]
Thus,
\[
|e^{iA} f| \leq \int |K(x, y)| \cdot |f(y)| \, dy \lesssim_\AA \int \langle x - y \rangle^{-2} \cdot |f(y)| \, dy = L(|f|)
\]
as desired.

\end{proof}

\subsection{Paradifferential normal form transformation} 

We recall the order 0 quadratic terms collected in \eqref{Q2k-def}:
\[
Q^2_k(u, v) = P_k^+(u_{\geq k} \D_x v_{< k} ) + \half \D_x P_k^+(u_{\geq k} v_{\geq k} ) + [P_k^+, u_{< k}] \D_x v.
\] 
In addition to $Q^2_k$, we define quadratic forms corresponding to the residual quadratic terms \eqref{Dtcomm} which appear after applying the exponential conjugation $e^{iA}$. These were computed in the previous section in Lemma~\ref{l:expCom}:
\begin{equation}\label{Q-def}
\begin{aligned}
Q_{k, I}^{exp, 2}(u, v) &:= (\D_x u_{(k', k)})P_k^+v, \\
Q_{k, II}^{exp, 2}(u, v) &:= (|D|^\alpha u_{(k', k)})P_k^+v, \\
Q_{k, h}^{exp, 2}(u, v) &:= \int e^{i(x - y)\xi} |\xi|^{\alpha - 2} (\D_x u_{(k', k)} )(y + h(x - y)) \D_xP_k^+v(y) \, dy d\xi.
\end{aligned}
\end{equation}
Lastly, for later use in the analysis of the linearized equation, we consider an additional quadratic form,
\[
Q_k^{lin, 2}(v, \phi) := v_{(0, k)} \D_x \phi_k^+.
\]
Our objective in this section is to construct normal form corrections associated to these quadratic forms.

\

Denote the dispersion relation of \eqref{BO} by 
\[
\omega(\xi) = -\xi |\xi|^\alpha
\]
and define the resonance function
\[
\Omega(\xi_1, \xi_2) = \omega(\xi_1) + \omega(\xi_2) - \omega(\xi_1 + \xi_2).
\]
Then we define bilinear normal form corrections of the form
\begin{equation}\label{Bk-def}
\begin{aligned}
\hat B_k^2(u, v)(\xi) &:= \int_{\xi_1 + \xi_2 = \xi} \Omega^{-1}(\xi_1, \xi_2)\hat Q_k^2(\xi_1, \xi_2)\hat u(\xi_1) \hat v(\xi_2) \, d\xi_1, \\
\hat B_{k, \cdot}^{exp, 2}(u, v)(\xi) &:= \int_{\xi_1 + \xi_2 = \xi} \Omega^{-1}(\xi_1, \xi_2)\hat Q_{k, \cdot}^{exp, 2}(\xi_1, \xi_2)\hat u(\xi_1) \hat v(\xi_2) \, d\xi_1, \\
\hat B_k^{lin, 2}(u, v)(\xi) &:= \int_{\xi_1 + \xi_2 = \xi} \Omega^{-1}(\xi_1, \xi_2)\hat Q_k^{lin, 2}(\xi_1, \xi_2)\hat u(\xi_1) \hat v(\xi_2) \, d\xi_1,
\end{aligned}
\end{equation}
where, writing $\xi = \xi_1 + \xi_2$,
\begin{equation}\label{Qk-def}
\begin{aligned}
\hat Q_k^{2}(\xi_1, \xi_2) &= \hat{P}_k^+(\xi)\hat P_{\geq k}(\xi_1) i\xi_2 \hat P_{< k}(\xi_2) + \half i\xi \hat{P}_k^+(\xi) \hat P_{\geq k}(\xi_1)\hat P_{\geq k}(\xi_2) \\
&\quad + [\hat P_k^+(\xi) - \hat P_k^+(\xi_2)] \hat P_{< k}(\xi_1) i\xi_2,\\
\hat Q_{k, I}^{exp, 2}(\xi_1, \xi_2) &= - i\xi_1 \hat{P}_{(k', k)}(\xi_1)\hat{P}_k^+(\xi_2), \\
\hat Q_{k, II}^{exp, 2}(\xi_1, \xi_2) &= |\xi_1|^{\alpha}\hat{P}_{(k', k)}(\xi_1)\hat{P}_k^+(\xi_2), \\
\hat Q_{k, h}^{exp, 2}(\xi_1, \xi_2) &= |(1 - h)\xi_1 + \xi_2|^{\alpha - 2} \xi_1 P_{(k', k)}(\xi_1) \xi_2 P_k^+(\xi_2), \\
\hat Q_k^{lin, 2}(\xi_1, \xi_2) &= - i\xi_2 \hat{P}_{(0, k)}(\xi_1)\hat{P}_k^+(\xi_2).
\end{aligned}
\end{equation}

Observe that $\Omega(\xi_1, \xi_2)$ vanishes along the three lines $\xi_1 = 0$, $\xi_2 = 0$, and $\xi_1 + \xi_2 = 0$, but the terms of $Q_k^2$, $Q_{k, \cdot}^{exp, 2}$, and $Q_k^{lin, 2}$ are either supported away from low frequencies, differentiated at low-frequency, or in commutator form. 

To establish precise estimates for $B_k^2$, $B_{k, \cdot}^{exp, 2}$, and $B^{lin, 2}_k$, we recall the following approximate identity for the resonance function $\Omega$, which may be established by a Taylor expansion (see for instance \cite{herr}):

\begin{lemma}\label{l:approxOmega}
For $\alpha > 0$, we have 
\[
|\Omega(\xi_1, \xi_2)| \approx |\xi_{min}||\xi_{max}|^{\alpha},
\]
where $|\xi_{min}|=\min\{ |\xi_1|,|\xi_2|,|\xi_1+\xi_2|\} $
and $|\xi_{max}|=\max\{ |\xi_1|,|\xi_2|,|\xi_1+\xi_2|\} $.
\end{lemma}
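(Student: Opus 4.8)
The plan is to exploit the full symmetry and the scaling of the resonance function, reduce to an ordered configuration of the three interacting frequencies, and then rewrite $\Omega$ as an elementary one-dimensional integral that is manifestly of one sign and has integrand of size $|\xi_{max}|^\alpha$. No $X^{s,b}$ or oscillatory-integral machinery is needed; the lemma is purely calculus.

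First I would record that, since $\omega(\xi) = -\xi|\xi|^\alpha$ is odd, setting $\xi_3 := -(\xi_1+\xi_2)$ gives
\[
\Omega(\xi_1,\xi_2) = \omega(\xi_1) + \omega(\xi_2) + \omega(\xi_3), \qquad \xi_1 + \xi_2 + \xi_3 = 0,
\]
an expression symmetric in $\xi_1,\xi_2,\xi_3$, while $|\xi_{min}|$ and $|\xi_{max}|$ are the smallest and largest of $|\xi_1|,|\xi_2|,|\xi_3|$. We may assume $\xi_1\xi_2\xi_3 \neq 0$ (otherwise both sides vanish, using oddness on the left), and by permutation symmetry together with the reflection $(\xi_1,\xi_2,\xi_3)\mapsto(-\xi_1,-\xi_2,-\xi_3)$ we may assume $0 < |\xi_3| \le |\xi_2| \le |\xi_1|$. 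Two elementary consequences of $\xi_1 = -(\xi_2+\xi_3)$ with $|\xi_1|$ maximal follow: the triangle inequality $|\xi_1| \le |\xi_2| + |\xi_3| \le 2|\xi_2|$ gives $|\xi_2| \ge \half|\xi_1|$, so $|\xi_2| \approx |\xi_1| = |\xi_{max}|$; and $\xi_2,\xi_3$ must have the \emph{same} sign, since opposite signs would force $|\xi_1| = \bigl| |\xi_2| - |\xi_3| \bigr| < |\xi_2|$.

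Next, using oddness again to write $\omega(\xi_1) = -\omega(\xi_2+\xi_3)$ and the fundamental theorem of calculus (note $\omega \in C^1$ with $\omega'(\xi) = -(\alpha+1)|\xi|^\alpha$ for $\alpha > 0$), I obtain
\[
\Omega = \omega(\xi_2) + \omega(\xi_3) - \omega(\xi_2+\xi_3) = \int_0^{\xi_3}\bigl(\omega'(s) - \omega'(\xi_2+s)\bigr)\, ds = (\alpha+1)\int_0^{\xi_3}\bigl(|\xi_2+s|^\alpha - |s|^\alpha\bigr)\, ds.
\]
Because $\xi_2$ and $\xi_3$ share a sign and $|s| \le |\xi_3| \le |\xi_2|$ throughout the range of integration, we have $|\xi_2+s| = |\xi_2| + |s| \in [|\xi_2|, 2|\xi_2|]$, with no cancellation. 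A one-variable estimate — superadditivity of $t \mapsto t^\alpha$ when $\alpha \ge 1$, concavity when $0 < \alpha < 1$ — then gives $(|\xi_2|+|s|)^\alpha - |s|^\alpha \approx |\xi_2|^\alpha \approx |\xi_{max}|^\alpha$ uniformly in $s$, with positive integrand. Integrating over an interval of length $|\xi_3| = |\xi_{min}|$ yields $|\Omega| \approx |\xi_{min}|\,|\xi_{max}|^\alpha$, with constants depending only on $\alpha$.

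The only genuinely substantive point, and the one I would emphasize, is the sign observation: the two frequencies of smallest modulus share a sign, which is precisely what prevents cancellation in $|\xi_2+s|$ over the integration range and makes the integrand one-signed of size $|\xi_{max}|^\alpha$. Everything else is the fundamental theorem of calculus and elementary inequalities. (An alternative to the final one-variable estimate is a soft compactness argument after normalizing $|\xi_{max}| = 1$, using that $\Omega$ vanishes exactly on the lines $\xi_1\xi_2(\xi_1+\xi_2) = 0$ by strict superadditivity of $t \mapsto t^{\alpha+1}$ for $\alpha > 0$; but the explicit integral keeps the $\alpha$-dependence transparent.)
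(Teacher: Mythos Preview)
Your proof is correct and complete. The paper does not actually prove this lemma but simply cites it as a standard fact obtainable by a Taylor expansion; your argument via the integral representation of $\Omega$ is precisely such an expansion carried out in full, and the key sign observation you highlight is exactly what makes the integrand one-signed and of the claimed size.
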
 
Using this approximation, we have the following boundedness of the normal form corrections $B_k$. 

\begin{proposition}\label{p:Bk-est}
Let $\gamma_1 + \gamma_2 + \gamma_3 = 1$ and $\gamma_i \geq 0$. The bilinear forms $B_k^2$, $B_{k, \cdot}^{exp, 2}$, and $B_k^{lin, 2}$ defined in \eqref{Bk-def} may be expressed in the form
\begin{equation*}
\begin{aligned}
 B_k^2(u, v) &= |D|^{-\alpha \gamma_3} L_k(|D|^{-\alpha \gamma_1} u, |D|^{-\alpha \gamma_2} v), \\
 B_{k, I}^{exp, 2}(u, v) &= |D|^{-\alpha \gamma_3} L_k(|D|^{-\alpha \gamma_1} u, |D|^{-\alpha \gamma_2} v), \\
 B_{k, II}^{exp, 2}(u, v) &= |D|^{-\alpha \gamma_3} L_k(|D|^{(\alpha - 1) -\alpha \gamma_1} u, |D|^{-\alpha \gamma_2} v), \\
 B_{k, h}^{exp, 2}(u, v) &= |D|^{-\gamma_3} L_k(|D|^{-\gamma_1} u, |D|^{-\gamma_2} v), \\
  B_{k}^{lin, 2}(u, v) &= L_k(|D|^{ - 1} u, |D|^{1-\alpha} v).
\end{aligned}
\end{equation*}
\end{proposition}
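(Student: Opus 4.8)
The plan is to verify each of the five identities by a direct symbol computation, extracting from the multiplier $\Omega^{-1}(\xi_1,\xi_2)\hat Q(\xi_1,\xi_2)$ the claimed powers of $|\xi|$, $|\xi_1|$, $|\xi_2|$, and checking that the residual symbol — after these powers are peeled off — is a bounded, smooth, compactly-frequency-localized kernel in the sense required for an $L_k$ form. The key structural input is Lemma~\ref{l:approxOmega}, which gives $|\Omega(\xi_1,\xi_2)| \approx |\xi_{min}| |\xi_{max}|^\alpha$; the key cancellation input is the observation recorded just before the proposition, namely that in each $\hat Q$ the factor that would otherwise blow up as $\Omega^{-1}$ vanishes at the corresponding low-frequency line — either because the low-frequency variable is undifferentiated and frequency-restricted away from zero, or because it carries a derivative $\xi_j$ that cancels the vanishing of $\Omega$ there, or because of the commutator structure $\hat P_k^+(\xi) - \hat P_k^+(\xi_2)$, which vanishes to first order in $\xi_1$ when $\xi_1 \to 0$.

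First I would treat $B^2_k$. On the support of each of its three pieces one has a definite frequency configuration: in the first piece $|\xi_1|\gtrsim 2^k \approx |\xi|$ and $|\xi_2| \lesssim 2^k$, so $|\xi_{min}| \approx |\xi_2|$ and $|\xi_{max}| \approx 2^k$, and the factor $i\xi_2$ exactly supplies the vanishing order matching $|\xi_{min}|$; hence $\Omega^{-1}\hat Q^2_k$ equals $2^{-\alpha k}$ times a smooth symbol supported in $|\xi|\approx 2^k$, $|\xi_1|\gtrsim 2^k$, $|\xi_2|\lesssim 2^k$, which is a classical $L_k$ multiplier. The second piece is supported in $|\xi_1|,|\xi_2|\gtrsim 2^k$, so $|\Omega| \approx 2^{k(1+\alpha)}$ and the derivative $i\xi$ is harmless; again we get $2^{-\alpha k}$ times a good symbol. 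For the third (commutator) piece, write $\hat P_k^+(\xi) - \hat P_k^+(\xi_2) = \xi_1 \int_0^1 (\hat P_k^+)'(\xi_2 + t\xi_1)\,dt$; this factor of $\xi_1$ cancels the $|\xi_{min}|\approx |\xi_1|$ vanishing of $\Omega$ (here $|\xi_1|\lesssim 2^k$, $|\xi_2|\approx |\xi|\approx 2^k$), leaving once more $2^{-\alpha k}$ times a smooth, compactly-supported symbol. Distributing $2^{-\alpha k} \approx |D|^{-\alpha\gamma_1}|D|^{-\alpha\gamma_2}|D|^{-\alpha\gamma_3}$ across the two inputs and the output — legitimate since all three frequencies are comparable to $2^k$ up to the $L_k$-admissible ratios, and $\gamma_1+\gamma_2+\gamma_3 = 1$ — gives exactly the stated form. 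The forms $B^{exp,2}_{k,I}$ and $B^{lin,2}_k$ are handled identically: in $\hat Q^{exp,2}_{k,I}$ the low variable is $\xi_1 \in (k',k)$ carrying a derivative $i\xi_1$ matching $|\xi_{min}|\approx|\xi_1|$, giving again $2^{-\alpha k}$ overall and the stated $|D|^{-\alpha\gamma_i}$ distribution; in $\hat Q^{lin,2}_k$ the low variable $\xi_1\in(0,k)$ is undifferentiated while $i\xi_2$ with $|\xi_2|\approx 2^k$ is present, and one checks $\Omega^{-1}\hat Q^{lin,2}_k = |\xi_1|^{-1}|\xi_2|^{1-\alpha}$ times a good symbol, whence the form $L_k(|D|^{-1}u, |D|^{1-\alpha}v)$ with the full weight placed on the two factors.

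For $B^{exp,2}_{k,II}$ the numerator contains $|\xi_1|^\alpha$ rather than a first derivative, but $\xi_1 \in (k',k)$ is still a genuine low frequency, so $|\Omega|\approx |\xi_1||\xi|^\alpha \approx |\xi_1|2^{\alpha k}$; thus $\Omega^{-1}|\xi_1|^\alpha \approx |\xi_1|^{\alpha - 1} 2^{-\alpha k}$, which accounts for the extra factor $|D|^{\alpha - 1}$ placed on the $u$-input in the stated identity, with the remaining $2^{-\alpha k}$ distributed as $|D|^{-\alpha\gamma_1-\alpha\gamma_2-\alpha\gamma_3}$ — note the $(\alpha-1)$ on the first slot is additional to, not instead of, the $-\alpha\gamma_1$. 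Finally $B^{exp,2}_{k,h}$ is the one genuinely order-$0$ (in the Fourier sense $m=0$) term: its numerator is $|(1-h)\xi_1 + \xi_2|^{\alpha-2}\xi_1\xi_2$, with $|\xi_1|$ small, $|\xi_2|\approx 2^k$, so $(1-h)\xi_1 + \xi_2$ is comparable to $2^k$ uniformly in $h\in[0,1]$, making $|(1-h)\xi_1+\xi_2|^{\alpha-2} \approx 2^{(\alpha-2)k}$ a smooth bounded factor; combined with $\Omega^{-1}\xi_1 \approx 2^{-\alpha k}$ (again $\xi_1$ cancels $|\xi_{min}|$) this yields a multiplier $\approx 2^{-2k}$ times a good symbol, i.e.\ $|D|^{-\gamma_1-\gamma_2-\gamma_3} = |D|^{-1}$ split across the three slots as claimed. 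The one point requiring care — and the main obstacle — is confirming that in every case, after peeling the explicit homogeneous factors, the leftover symbol genuinely defines a bounded translation-invariant $L_k$ form with an $L^1$ kernel: this means checking it extends to a smooth, compactly supported (in all frequency variables, up to the dyadic ratios) symbol, uniformly in $k$ after rescaling, and uniformly in the parameter $h\in[0,1]$ for the $Q_{k,h}$ family; the potential pitfall is the edges of the Littlewood–Paley cutoffs, where one must use that the derivative factors or commutator differences vanish to the right order so that $\Omega^{-1}$ times the numerator stays smooth across $\xi_{min} = 0$ rather than merely bounded. Once this smoothness-across-the-resonant-set check is in hand, translating the symbol bounds into the $L_k$ form via the standard correspondence (smooth compactly supported symbol $\Leftrightarrow$ $L^1$ convolution kernel) completes the proof.
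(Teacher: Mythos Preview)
Your approach is essentially the same as the paper's: for each symbol $m = \Omega^{-1}\hat Q$, identify the frequency configuration, invoke Lemma~\ref{l:approxOmega} to size $\Omega$, cancel the $|\xi_{min}|$ against the matching derivative/commutator factor, and read off the residual homogeneity. The paper carries this out term by term exactly as you outline, and is in fact less explicit than you are about the final step (verifying that the leftover symbol is smooth so that the $L^1$-kernel condition for an $L_k$ form holds).

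There is, however, one genuine slip in your treatment of the second (high--high) piece of $B_k^2$. You write ``$|\xi_1|,|\xi_2|\gtrsim 2^k$, so $|\Omega| \approx 2^{k(1+\alpha)}$'' and later ``all three frequencies are comparable to $2^k$''. This is false: the constraint is only $|\xi_1|,|\xi_2|\gtrsim 2^k$ with $|\xi_1+\xi_2|\approx 2^k$, so the inputs can sit at any common dyadic scale $2^j$ with $j\geq k$ (opposite signs, nearly cancelling). In that regime $|\xi_{min}|\approx|\xi|\approx 2^k$ and $|\xi_{max}|\approx|\xi_1|\approx|\xi_2|$, so Lemma~\ref{l:approxOmega} gives $|\Omega|\approx |\xi|\,|\xi_1|^\alpha$, not $2^{k(1+\alpha)}$. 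The paper records this correctly, obtaining $|m_k^{2,2}|\lesssim |\xi_1|^{-\alpha}$. Your distribution argument (``legitimate since all three frequencies are comparable to $2^k$'') then does not apply as stated; instead one uses $|\xi_1|\approx|\xi_2|\gtrsim|\xi|$ to see that $|\xi|^{\alpha\gamma_3}|\xi_1|^{\alpha\gamma_1}|\xi_2|^{\alpha\gamma_2}\cdot|\xi_1|^{-\alpha}\lesssim 1$ for any admissible $\gamma_i$. The conclusion survives, but the intermediate reasoning needs this correction. (There is also a minor arithmetic slip in your $B_{k,h}^{exp,2}$ computation: the multiplier comes out $\approx 2^{-k}$, not $2^{-2k}$, consistent with the paper's $|m_{k,h}^{exp,2}|\lesssim|\xi_2|^{-1}$ and with your own final $|D|^{-1}$.)
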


\begin{proof}

From the first term of $\hat Q_k^2$ in \eqref{Qk-def}, we have the symbol
\[ m_k^{2,1}(\xi_1, \xi_2) := i\frac{\hat P_{k_1}^+(\xi_1 + \xi_2)\hat P_{\geq k}(\xi_1) \xi_2 \hat P_{< k}(\xi_2)}{\Omega(\xi_1, \xi_2) }. \]
Applying Lemma~\ref{l:approxOmega} with $|\xi_{min}| = |\xi_2|$ and $|\xi_{max}|\approx |\xi_1|$, we find
\[ |m_k^{2,1}(\xi_1, \xi_2)| \lesssim |\xi_1|^{-\alpha}. \]
In particular, since $|\xi_{max}| \approx |\xi_1|$,
\[ |m_k^{2,1}(\xi_1, \xi_2)| \lesssim |\xi_1 + \xi_2|^{-\alpha\gamma_3}|\xi_1|^{-\alpha \gamma_1} |\xi_2|^{-\alpha \gamma_2}. \]

\

From the second term of $\hat Q_k^2$ in \eqref{Qk-def}, we have the symbol
\[ m_k^{2,2}(\xi_1, \xi_2) := \half i (\xi_1 + \xi_2)\frac{\hat P_k^+(\xi_1 + \xi_2)\hat P_{\geq k}(\xi_1) \hat P_{\geq k}(\xi_2)}{\Omega(\xi_1, \xi_2) }. \]
Applying Lemma~\ref{l:approxOmega} with $|\xi_{min}| \approx |\xi_1 + \xi_2|$ and $|\xi_{max}|\approx |\xi_1| \approx |\xi_2|$, we find
\[ |m_k^{2,2}(\xi_1, \xi_2)| \lesssim |\xi_1|^{-\alpha} \]
as before.

\

From the third term of $\hat Q_k^2$ in \eqref{Qk-def}, we have the symbol
\[ m_k^{2,3}(\xi_1, \xi_2):= i\frac{[\hat P_k^+(\xi_1 + \xi_2) - \hat P_k^+(\xi_2)] \hat P_{(k', k)}(\xi_1) \xi_2}{\Omega(\xi_1, \xi_2)}. \]
Applying Lemma~\ref{l:approxOmega} with $|\xi_{min}| \approx |\xi_1|$ and $|\xi_{max}| \approx |\xi_2|$, we find
\[ |\Omega(\xi_1, \xi_2)| \approx | \xi_1| |\xi_2|^\alpha. \]
Since
\[ |\hat P_k^+(\xi_1 + \xi_2) - \hat P_k^+(\xi_2)| \lesssim \xi_1 2^{-k} \approx \xi_1 \xi_2^{-1}, \]
we conclude 
\[ |m_k^{2,3}(\xi_1, \xi_2)| \lesssim |\xi_2|^{-\alpha}. \]
This completes the analysis of $\hat Q_k^2$ and hence $B_k^2$. 

\

The analysis of $\hat Q_{k, I}^{exp, 2}$ is similar to the first term of  $\hat Q_k^2$, so we obtain the same form for $B_{k, I}^{exp, 2}$.

\

We similarly treat $\hat Q_{k, II}^{exp, 2}$ in \eqref{Qk-def}. We have the symbol
\[  m_{k, II}^{exp, 2}(\xi_1, \xi_2):= \frac{|\xi_1|^{\alpha}\hat{P}_{(k', k)}(\xi_1)\hat{P}_k^+(\xi_2)}{\Omega(\xi_1, \xi_2) }. \]
Applying Lemma~\ref{l:approxOmega} with $|\xi_{min}| \approx |\xi_1|$ and $|\xi_{max}| \approx |\xi_2|$, we find
\[ |m_{k, II}^{exp, 2}(\xi_1, \xi_2)| \lesssim |\xi_1|^{\alpha - 1}|\xi_2|^{-\alpha}. \]

\

Next, we consider $\hat Q_{k, h}^{exp, 2}$, which has the symbol
\[ m_{k, h}^{exp, 2}(\xi_1, \xi_2) := \frac{|(1 - h)\xi_1 + \xi_2|^{\alpha - 2} \xi_1 P_{(k', k)}(\xi_1) \xi_2 P_k^+(\xi_2)}{\Omega(\xi_1, \xi_2)}. \]
Applying Lemma~\ref{l:approxOmega} with $|\xi_{min}| \approx |\xi_1|$ and $|\xi_{max}| \approx |\xi_2|$, we find
\[ |m_{k, h}^{exp, 2}(\xi_1, \xi_2)| \lesssim |\xi_2|^{1-\alpha} |(1 - h)\xi_1 + \xi_2|^{\alpha - 2} \lesssim |\xi_2|^{-1}. \]

\

Lastly, for $\hat Q_{k}^{lin, 2}$, we have the symbol
\[  m_{k}^{lin, 2}(\xi_1, \xi_2):= \frac{- i\xi_2 \hat{P}_{(0, k)}(\xi_1)\hat{P}_k^+(\xi_2)}{\Omega(\xi_1, \xi_2) }. \]
Applying Lemma~\ref{l:approxOmega} with $|\xi_{min}| \approx |\xi_1|$ and $|\xi_{max}| \approx |\xi_2|$, we find
\[ |m_{k}^{lin, 2}(\xi_1, \xi_2)| \lesssim |\xi_1|^{- 1}|\xi_2|^{1-\alpha}. \]
\end{proof}


\section{Estimates for the full equation}\label{s:bootstrapStart}

In this section we prove a priori bounds for smooth solutions to the dispersion-generalized Benjamin-Ono equation \eqref{BO},
\[
(\D_t - |D|^\alpha \D_x) \phi = \half \D_x(\phi^2).
\]
Since \eqref{BO} admits the scaling symmetry
\begin{equation}\label{scaleInv}
\phi(t, x) \rightarrow \lambda^\alpha \phi(\lambda^{1 + \alpha} t, \lambda x),
\end{equation}
it suffices to work with solutions with small data and time interval $[0, 1]$. 

To state our main estimate, we define the Strichartz space
\[
S = L^\infty_t L^2_x \cap L^4_t W_x^{-\frac{1}{4}(1 - \alpha), \infty},
\]
as well as the lateral Strichartz norm,
\[
\|u\|_{S_{lat}} = \||D|^{-\frac14} u\|_{L^4_x L^\infty_t} + \||D|^{\frac{\alpha}{2}} u\|_{L^\infty_x L^2_t}.
\]

We will also state the estimate using the language of frequency envelopes, which will be a convenient formulation to prove local well-posedness and in particular the continuous dependence on initial data. Following Tao \cite{tao} (see also Ifrim-Tataru \cite{ITprimer}), we say that $\{c_k\}_{k = 0}^\infty \in \ell^2$ is a frequency envelope for $\phi \in H^s$ if
\begin{enumerate}[a)]
\item it satisfies the energy bound
\[\|P_k \phi\|_{H^s} \leq c_k,\]

\item it is slowly varying,
\[
\frac{c_j}{c_k} \lesssim 2^{\delta |j - k|},
\]
where $\delta$ is a small universal constant,

\item it satisfies the upper bound
\[
\sum_k c_k^2 \lesssim \| u\|^2_{H^s}.
\]
\end{enumerate}
Such frequency envelopes always exist, for instance by taking
\[
c_k = \sup_j 2^{-\delta |j - k|} c_j.
\]

\begin{theorem}\label{t:nonlinear}
Let $\alpha \in [1, 2]$, $s \geq \frac{3}{4}(1 - \alpha)$, and $\phi$ be a smooth solution to \eqref{BO} on $I = [0, 1]$ with small initial data,
\[
\|\phi_0\|_{H^s} \leq \eps.
\]
Let $\{c_k \}_{k = 0}^\infty \in \ell^2$ so that $\eps c_k$ is a frequency envelope for $\phi_0 \in H^s$, and denote $d_k = 2^{-sk} c_k$. Then we have

\begin{enumerate}[a)]
\item the Strichartz and lateral Strichartz bounds
\[
\|\phi_k\|_{S \cap S_{lat}} \lesssim \eps d_k,
\]
\item the bilinear bound
\[
\|\phi_j \phi_k\|_{L^2} \lesssim 2^{-\frac{\alpha}{2} \max(j,k)} \eps^2 d_j d_k, \qquad j \neq k.
\]
\end{enumerate}
\end{theorem}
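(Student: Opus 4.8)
The plan is to establish Theorem~\ref{t:nonlinear} by a bootstrap argument in which the three bounds --- direct Strichartz, lateral Strichartz, and bilinear --- are proved simultaneously, all packaged in frequency-envelope form so that the continuous dependence follows later. First I would set up the bootstrap hypothesis: assume that on a subinterval $[0,T] \subseteq [0,1]$ all of (a) and (b) hold with an oversized constant $C$, and show that they close with $C/2$. The control parameters $\AA(t), \BB(t)$ from \eqref{control-parameters} should be estimated in terms of the frequency-envelope norms via Bernstein and the Strichartz bound (a); since $s \geq \tfrac34(1-\alpha)$ and $\AA$ involves the exponent $\tfrac14(1-3\alpha)$, which is $\leq s$ in the relevant range, the $L^4_t$ Strichartz norm controls $\|\AA\|_{L^4_t}$, and similarly $\|\BB\|_{L^\infty_t}$ should be bounded using $L^\infty_t L^2_x$ plus the lateral estimate. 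These bounds on $\AA$, $\BB$ feed into Proposition~\ref{p:expBd} (so the exponential conjugation is $L^p$-bounded), Proposition~\ref{p:bilinear} (whose implied constant depends only on $\|\AA\|_{L^\infty_t}$), and the Strichartz estimates with transport of Theorem~\ref{t:strichartz-transport}.

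The core step is the frequency-localized analysis of $\psi_k^+ = P_k^+ e^{iA}\phi_k^+$, which by \eqref{expError} and Lemma~\ref{l:expCom} satisfies a transported linear equation
\[
(\D_t - |D|^\alpha\D_x - \phi_{\leq k'}\D_x)\psi_k^+ = (\text{source}),
\]
where, after the normal form substitution, the source consists of: cubic and higher perturbative terms $Q_k^{exp,3+}$; the order-$0$ and order-$(1-\alpha)$ quadratic terms $Q_k^2$, $Q_{k,I}^{exp,2}$, $Q_{k,II}^{exp,2}$, $Q_{k,h}^{exp,2}$, which I remove by substituting $\psi_k^+ + \sum B_k(\phi,\phi)$ using the bilinear normal form corrections of \eqref{Bk-def}; and the normal-form error terms (time derivatives of $B_k$ landing on the nonlinearity, producing cubic terms). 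I would then apply Theorem~\ref{t:strichartz-transport} with $m = 1+\alpha \in [2,3]$, $\delta = \tfrac12(1-\alpha) = k'/k$, and $b = \phi_{\leq k'}$ --- checking that the hypotheses \eqref{L1-symbol-b}, \eqref{Linfty-symbol-b} on $b$ follow from the bootstrap bounds on $\BB$ and from the equation (for the time derivative, one replaces $\D_t\phi_{\leq k'}$ using \eqref{BO}) --- to obtain $\|\psi_k^+\|_{S \cap S_{lat}} \lesssim \eps d_k$ modulo source terms, and then transfer back to $\phi_k^+$ using the $L^p$-boundedness and the pointwise domination $|e^{iA}f|\lesssim_\AA L(|f|)$ of Proposition~\ref{p:expBd}, together with the boundedness of the normal form corrections from Proposition~\ref{p:Bk-est}. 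For the bilinear bound (b) with $j < k$ say, I would run the same reduction on both factors $\phi_j, \phi_k$ --- but only the high factor $\phi_k$ needs the transported flow; $\phi_j$ can be treated by its own equation with low-frequency transport $\phi_{<\mu'}$ --- and then invoke Proposition~\ref{p:bilinear} directly (in its $L$-form generalization, using $\|L(u,v)\|_{L^r}\lesssim \sup_y\|uT_yv\|_{L^r}$), which gives the gain $2^{-\alpha k/2}$, i.e. exactly $2^{-\frac{\alpha}{2}\max(j,k)}$.

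The remaining work is the estimation of all source terms, which splits into: (i) the quadratic terms directly removed by normal forms, where Proposition~\ref{p:Bk-est} converts each $B_k$ into an $L_k$ form with prescribed derivative distribution $|D|^{-\alpha\gamma_i}$, and one chooses the weights $\gamma_1,\gamma_2,\gamma_3$ to balance the two input frequencies against the output in $S$ or $S_{lat}$, applying Hölder plus the (already-assumed) Strichartz bounds --- the slowly-varying property of $c_k$ absorbs the dyadic summation over the low-frequency index; (ii) the cubic and higher terms in $Q_k^{exp,3+}$ and the normal form errors, which are genuinely lower order because the exponential conjugation's curvature and the resonance denominator each contribute smalling factors, so these close by Hölder with two factors in $L^4_t L^\infty_x$-type norms and one in $L^\infty_t L^2_x$, using Proposition~\ref{p:expBd} to discard the bounded pseudodifferential operators. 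The main obstacle I anticipate is the bookkeeping for the term $\phi_{(k',k)}\D_x\phi_k^+$: verifying that Lemma~\ref{l:expCom} exactly cancels its leading part and that the residual order-$(1-\alpha)$ piece $Q_{k,II}^{exp,2}$ is controllable --- because this is precisely where the quasilinear derivative loss lives, and where the choice of $k' = \tfrac12(1-\alpha)k$ and the borderline regularity $s = \tfrac34(1-\alpha)$ are both forced; one must check that the $|D|^{\alpha-1}$ loss in the $B_{k,II}^{exp,2}$ estimate is compensated by the low-frequency localization $P_{(k',k)}$ on that factor together with the gain from the bilinear estimate, leaving no net loss at the stated threshold.
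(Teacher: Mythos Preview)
Your outline matches the paper's approach closely: bootstrap on the combined Strichartz/lateral/bilinear quantity, pass to the renormalized variable $\tilde\psi_k^+ = \psi_k^+ - B_k(\phi,\phi)$, apply Theorem~\ref{t:strichartz-transport} and Proposition~\ref{p:bilinear} to $\tilde\psi_k^+$, estimate the cubic source $Q_k$ in $L^1_tL^2_x$, and transfer back to $\phi_k^+$ via Propositions~\ref{p:expBd} and~\ref{p:Bk-est}. Two points deserve correction.

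First, you have the roles of $\AA$ and $\BB$ reversed. In the paper (see \eqref{inftyest} and \eqref{inftyest2}) it is $\BB$ that is placed in $L^4_t$ via the Strichartz bound, while $\AA$ is placed in $L^\infty_t$ via Sobolev embedding from $L^\infty_tL^2_x$. This matters: both Proposition~\ref{p:bilinear} and Proposition~\ref{p:expBd} require $\|\AA\|_{L^\infty_t}$, not $L^4_t$, so the swap would block their application.

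Second, your plan for the cubic source terms (``two factors in $L^4_tL^\infty_x$ and one in $L^\infty_tL^2_x$'') handles the low--low--high interactions but does not close for the high--high--low case $L_k(\phi_j,\phi_j,\phi_k^+)$ with $j>k$. There the paper (Lemma~\ref{l:source-bd}) splits on $\alpha$: for $\alpha \le \tfrac53$ it uses the bilinear bootstrap hypothesis on $\phi_j\phi_k^+$ together with one Strichartz factor, while for $\alpha > \tfrac53$ it uses the \emph{lateral} Strichartz norms (both $\phi_j$ in $L^\infty_xL^2_t$, $\phi_k^+$ in $L^4_xL^\infty_t$). Pure standard Strichartz does not sum in $j$ across the full range $\alpha\in[1,2]$. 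So the lateral Strichartz bound is not just an output of the theorem --- it is an essential input to the source-term estimate, and this is where the threshold $s\ge\tfrac34(1-\alpha)$ is saturated.
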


\subsection{The bootstrap argument} 

Here, we set up the proof of Theorem~\ref{t:nonlinear} using a standard continuity argument. For $t_0 \in (0, 1]$, we define
\begin{equation*}
\begin{aligned}
M(t_0) &:= \sup_k d_k^{-2} \|\phi_k\|^2_{S([0, t_0]) \cap S_{lat}([0, t_0])} + \sup_{k \neq j \in \N} \sup_y 2^{\frac{\alpha}{2}\max(j, k)} d_{j}^{-1} d_{k}^{-1}\|\phi_{j} T_y\phi_{k}\|_{L^2([0, t_0])}.
\end{aligned}
\end{equation*}

Then to prove Theorem~\ref{t:nonlinear}, it suffices to show that $M(1) \lesssim \eps^2$. In turn, since $M$ is continuous in $t$ and
\[
\lim_{t \rightarrow 0} M(t) \lesssim \eps^2,
\]
we may use a continuity argument to reduce this to showing
\begin{equation}\label{nlinBootstrap}
M(t_0) \lesssim \eps^2 + (C\eps)^3
\end{equation}
under the bootstrap assumption
\begin{equation}\label{nlinBootstrapAssump}
M(t_0) \leq (C \eps)^2 \ll 1,
\end{equation}
where we choose $\eps$ sufficiently small depending on $C$. 

Recall that we define the control parameters \eqref{control-parameters},
\[
\AA(t) = \| \langle D \rangle^{\frac14 (1 - 3\alpha)} \phi(t)\|_{L^\infty}, \qquad \BB(t) = \| \langle D \rangle^{\half (1 - \alpha)} \phi(t)\|_{L^\infty}.
\]
We observe that in particular, the Strichartz component of the bootstrap assumption implies the pointwise estimate
\begin{equation}\label{inftyest}
\|\phi_k\|_{L^4_tL^\infty_x} \lesssim 2^{\frac14(1 - \alpha)k}C\eps d_k \leq 2^{-\half (1 - \alpha)k} C\eps c_k = 2^{-k'} C\eps c_k,
\end{equation}
and thus $\BB \in L^4_t$. On the other hand, by Sobolev embedding,
\begin{equation}\label{inftyest2}
\|\phi_k\|_{L^\infty_{t, x}} \lesssim 2^{\frac{k}{2}+}C\eps d_k = 2^{(\frac12 - s)k+} C\eps c_k \leq 2^{\frac14(3\alpha - 1)k} C\eps c_k,
\end{equation}
and thus $\AA \in L^\infty_t$.

On the other hand, from the lateral estimates, we have
\begin{equation}\label{latest}
\begin{aligned}
&\|\phi_k\|_{L^4_xL^\infty_t} \lesssim 2^{\frac{k}{4}} C \eps d_k \leq 2^{(\frac34\alpha - \half)k} C \eps c_k, \\
&\|\phi_k\|_{L^\infty_xL^2_t} \lesssim 2^{-\frac{\alpha}{2}k} C \eps d_k \leq 2^{\frac14(\alpha - 3)k} C \eps c_k.
\end{aligned}
\end{equation}

To prove Theorem~\ref{t:nonlinear}, it remains to prove \eqref{nlinBootstrap} under the bootstrap assumption \eqref{nlinBootstrapAssump}.

\subsection{The combined renormalization}

We reduce the estimate \eqref{nlinBootstrap} on the original solution $\phi$ to that for an unknown combining the two renormalizations discussed in Section~\ref{s:nf}.

Beginning with equation \eqref{expError} for the conjugated variable $\psi_k^+$, and substituting the exponential commutator expansion of Lemma~\ref{l:expCom}, we have
\begin{equation}\label{expError2}
\begin{aligned}
((\D_t - |D|^\alpha \D_x) - \phi_{\leq k'} \D_x) \psi_k^+ &= P^+_k (e^{iA} Q_k^2(\phi, \phi) - [ \phi_{\leq k'} \D_x, e^{iA}] \phi_k^+ \\
&\quad - (1 + \alpha)^{-1}e^{iA} Q_{k, I}^{exp, 2}(\phi, \phi) \\
&\quad + (1 + \alpha)^{-1} |D|^{-\alpha} e^{iA}\D_x Q_{k, II}^{exp, 2}(\phi, \phi) \\
&\quad -\frac{\alpha(\alpha - 1)}{2(1 + \alpha)} \D_x |D|^{-\alpha}e^{iA}\int_0^1(1 - h) Q_{k, h}^{exp, 2}(\phi, \phi) \, dh \\
&\quad + Q_k^{exp, 3+}(\phi_k^+)).
\end{aligned}
\end{equation}

Then we define the combined renormalization
\[
\tilde \psi_k^+ := \psi_k^+ - B_k(\phi, \phi)
\]
where
\begin{equation}\label{finalNF}
\begin{aligned}
B_k(\phi, \phi) &:= P^+_k \bigg(e^{iA} B_k^2(\phi, \phi) - (1 + \alpha)^{-1}e^{iA} B_{k, I}^{exp, 2}(\phi, \phi) \\
&\quad \phantom{\psi_k^+ - P^+_k \Big[} + (1 + \alpha)^{-1} |D|^{-\alpha} e^{iA}\D_x B_{k, II}^{exp, 2}(\phi, \phi) \\
&\quad \phantom{\psi_k^+ - P^+_k \Big[} -\frac{\alpha(\alpha - 1)}{2(1 + \alpha)} \D_x |D|^{-\alpha}e^{iA}\int_0^1(1 - h) B_{k, h}^{exp, 2}(\phi, \phi) \, dh\bigg),
\end{aligned}
\end{equation}
so that the quadratic contributions of the normal form corrections precisely cancel the quadratic terms on the right hand side of \eqref{expError2}. This leaves only perturbative cubic and higher order contributions, which arise from 
\begin{itemize}
\item commutators with the exponential operator $e^{iA}$, 
\item terms containing $\D_x (\phi^2)$, introduced by using the equation \eqref{BO} to replace time derivatives, and 
\item one instance of $\phi_{\leq k'} \D_x$ applied to the correction $B_k$.
\end{itemize}
Precisely, the combined renormalization $\tilde \psi_k^+$ satisfies
\begin{equation}\label{finalError}
((\D_t - |D|^\alpha \D_x) - \phi_{\leq k'} \D_x) \tilde \psi_k^+ = Q_k
\end{equation}
where 
\begin{equation*}
\begin{aligned}
Q_k &:= P^+_k \bigg(Q_k^{exp, 3+}(\phi_k^+) - [\phi_{\leq k'} \D_x, e^{iA}] \phi_k^+ \\
&\quad \phantom{P^+_k \Big[} -[(\D_t - |D|^\alpha \D_x), e^{iA}] B_k^2(\phi, \phi) \\
&\quad \phantom{P^+_k \Big[} +(1 + \alpha)^{-1}[(\D_t - |D|^\alpha \D_x), e^{iA}] B_{k, I}^{exp, 2}(\phi, \phi) \\
&\quad \phantom{P^+_k \Big[} -(1 + \alpha)^{-1}|D|^{-\alpha} [(\D_t - |D|^\alpha \D_x), e^{iA}]\D_x B_{k, II}^{exp, 2}(\phi, \phi) \\
&\quad \phantom{P^+_k \Big[} +\frac{\alpha(\alpha - 1)}{2(1 + \alpha)}\D_x |D|^{-\alpha} [(\D_t - |D|^\alpha \D_x), e^{iA}]\int_0^1(1 - h) B_{k, h}^{exp, 2}(\phi, \phi) \, dh \bigg) \\
&\quad- \half (B_k(\D_x (\phi^2), \phi) + B_k(\phi, \D_x (\phi^2))) \\
&\quad + \phi_{\leq k'} \D_x B_k(\phi, \phi).
\end{aligned}
\end{equation*}

\subsection{Bounds on the normal form variable}

In this section we reduce the bootstrap for $\phi$ to the same problem for the normal form variable $\tilde \psi_k^+$. Precisely, we reduce \eqref{nlinBootstrap} to the same estimate for the renormalized variable $\tilde \psi_k^+$, and on the other hand, also show that the bootstrap assumption \eqref{nlinBootstrapAssump} for $\phi$ implies estimates on the initial data for $\tilde \psi_k^+$.

\

We first establish estimates on the bilinear correction $B_k$ defined in \eqref{finalNF}.

\begin{lemma}\label{l:Bk-bootstrap}
a) We have
\[
\|B_k(\phi, \phi)\|_{S \cap S_{lat}}\lesssim 2^{-\frac14(1 + \alpha)k} (C\eps)^2 d_k.
\]

b) For $j \geq k$, we may write
\[
B_k(\phi_j, \phi_j) = 2^{-\alpha j} L_k(\phi_j, \phi_j)
\] 
and estimate
\[
\|B_k(\phi_{\geq j}, \phi_{\geq j})\|_{S \cap S_{lat}}\lesssim 2^{-\frac14(1 + \alpha)j} (C\eps)^2 d_k.
\]

c) We have the initial data estimate
\[
\|B_k(\phi(0), \phi(0))\|_{H^s} \lesssim  2^{-\frac14(\alpha + 1)k} (C\eps)^2 c_k.
\] 
\end{lemma}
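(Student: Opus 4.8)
## Proof Proposal for Lemma~\ref{l:Bk-bootstrap}

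The plan is to reduce all three parts to the boundedness structure recorded in Proposition~\ref{p:Bk-est}, combined with the $L^p$-boundedness of $e^{iA}$ (Proposition~\ref{p:expBd}), and then feed the resulting $L_k$ forms into the Strichartz bounds of Theorem~\ref{t:strichartz-transport} (via the pointwise estimate \eqref{inftyest} and the lateral estimates \eqref{latest}) and the bilinear bound of Proposition~\ref{p:bilinear}. The key preliminary observation for part (a) is that each of the four pieces of $B_k$ in \eqref{finalNF} is, by Proposition~\ref{p:Bk-est}, of the form $|D|^{-\beta_3} L_k(|D|^{-\beta_1}\phi, |D|^{-\beta_2}\phi)$ for a suitable splitting $\beta_1 + \beta_2 + \beta_3$ of a fixed total gain (the total gain is $\alpha$ for the first two terms, $1$ for $B_{k,h}^{exp,2}$, and one checks the extra $|D|^{-\alpha}\D_x$ prefactors on the third and fourth terms do not disturb the count). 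One then distributes the available derivative gains so as to place one factor in $L^4_tL^\infty_x$ using \eqref{inftyest} and the other in $L^\infty_tL^2_x$, and similarly in the lateral spaces using \eqref{latest}; since $e^{iA}$ and the outer $|D|^{-\alpha}\D_x$ multipliers are bounded on all the relevant mixed-norm spaces at fixed frequency $2^k$, and $e^{iA}$ additionally satisfies the $L$-form domination $|e^{iA}f\cdot g|\lesssim_\AA L(|f|,|g|)$, the uncorrelated translations introduced by the $L_k$ form are harmless. Bookkeeping the frequency envelope: for the output at frequency $2^k$ the two inputs are at comparable or higher frequencies $2^{j_1}, 2^{j_2}\gtrsim 2^k$ (by the support properties in \eqref{Qk-def}), and summing $c_{j_1}c_{j_2}2^{-(\text{gain})\max}$ against the slowly varying property of $c_k$ produces the claimed $2^{-\frac14(1+\alpha)k}(C\eps)^2 d_k$; one must check that $\frac14(1+\alpha)$ is indeed the exponent that survives after the derivative gains are spent on the Strichartz exponents, which is a direct arithmetic check using $\frac2p + \frac1q = \frac12$ and $d_k = 2^{-sk}c_k$ with $s = \frac34(1-\alpha)$ at the endpoint.

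For part (b), the identity $B_k(\phi_j,\phi_j) = 2^{-\alpha j}L_k(\phi_j,\phi_j)$ is immediate from the explicit symbols in \eqref{Qk-def} together with Lemma~\ref{l:approxOmega}: when both inputs sit at the single frequency $2^j \geq 2^k$, every symbol $m_k^{\bullet}$ there is $O(2^{-\alpha j})$ (for $B_{k,h}^{exp,2}$ one instead gets $O(2^{-j})$, but then the extra $|D|^{-\alpha}\D_x$ prefactor restores $2^{-\alpha j}$ at the output scale — here one uses that the output frequency is exactly $2^k$ only for the commutator term $m_k^{2,3}$, while for the diagonal $j$-localized piece the relevant large frequency is $2^j$). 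Given this, the estimate on $B_k(\phi_{\geq j},\phi_{\geq j})$ follows by decomposing $\phi_{\geq j} = \sum_{l\geq j}\phi_l$, applying the single-frequency gain $2^{-\alpha\max(l_1,l_2)}$ to each dyadic piece, placing the factors in $L^4_tL^\infty_x$ and $L^\infty_tL^2_x$ as in part (a), and summing the geometric series in $l_1,l_2\geq j$ using the slowly varying envelope; the worst term is $l_1 = l_2 = j$, giving $2^{-\alpha j}\cdot 2^{\frac12(1-\alpha)j}(C\eps)^2 c_j^2/c_k \cdot (\text{Strichartz exponent factors})$, which after the arithmetic collapses to $2^{-\frac14(1+\alpha)j}(C\eps)^2 d_k$.

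Part (c) is the time-independent specialization of part (a): at $t=0$ one simply measures the same $L_k$-form bilinear expression in $H^s$ rather than in $S\cap S_{lat}$, so it suffices to put both inputs in $L^2_x$ and use the derivative gain from Proposition~\ref{p:Bk-est} to absorb the loss $2^{-sk}$ and produce the $2^{-\frac14(\alpha+1)k}$ factor; the frequency-envelope sum is the same as before, now using only the energy bound $\|P_k\phi_0\|_{H^s}\leq \eps c_k$ and the slowly varying property. The main obstacle throughout is not any single estimate but the bookkeeping: one must verify that in \emph{each} of the several terms comprising $B_k$ the split of derivative gains can simultaneously (i) feed a legitimate pair of Strichartz/lateral-Strichartz exponents satisfying the scaling relation, (ii) leave the correct net power $2^{-\frac14(1+\alpha)k}$ at the output, and (iii) interact correctly with the $e^{iA}$ and $|D|^{-\alpha}\D_x$ prefactors, which are only bounded at fixed frequency and whose $L$-form domination must be invoked to legitimize the reduction to products of translates. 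Once the reduction to $L_k$-forms is in place, every remaining step is a routine application of Hölder, the pointwise bounds \eqref{inftyest}--\eqref{latest}, and geometric summation against the frequency envelope.
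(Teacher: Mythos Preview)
Your reduction to Propositions~\ref{p:expBd} and \ref{p:Bk-est} is correct, but the H\"older pairing you propose does not close. You say you will ``place one factor in $L^4_tL^\infty_x$ using \eqref{inftyest} and the other in $L^\infty_tL^2_x$''; this lands the product in $L^4_tL^2_x$, which is neither component of $S = L^\infty_tL^2_x \cap L^4_tW_x^{-\frac14(1-\alpha),\infty}$. To bound $B_k(\phi,\phi)$ in $S$ (and likewise in $S_{lat}$) you need one factor to act as a pointwise multiplier, i.e.\ to sit in $L^\infty_{t,x}$, so that the other can carry the full target norm. The paper does exactly this, using the Sobolev bound \eqref{inftyest2} (which you never invoke) on one input and $\|\phi_k\|_S$ on the other. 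The same issue recurs in part~(c): you propose to ``put both inputs in $L^2_x$'', but $L^2 \cdot L^2 \subset L^1$, not $L^2$; again one input must go in $L^\infty_x$.

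There is also a frequency bookkeeping error. You assert that ``the two inputs are at comparable or higher frequencies $2^{j_1},2^{j_2} \gtrsim 2^k$'', but for the three $B^{exp,2}_{k,\cdot}$ terms the first input is $\phi_{(k',k)}$, strictly \emph{below} frequency $2^k$. In the paper's argument this low-frequency factor is the one placed in $L^\infty_{t,x}$ via \eqref{inftyest2}, with the $|D|^{-\alpha}$ gain from Proposition~\ref{p:Bk-est} allocated to the output ($\gamma_3=1$); only for the high--high piece of $B_k^2$ (both inputs at frequency $2^j \geq 2^k$) does one instead set $\gamma_2=1$ and sum in $j$. Finally, neither Theorem~\ref{t:strichartz-transport} nor the bilinear estimate of Proposition~\ref{p:bilinear} is actually used in this lemma; the inputs are controlled purely through the bootstrap bounds \eqref{nlinBootstrapAssump} and \eqref{inftyest2}.
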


\begin{proof}

$a)$ The estimate combines bounds on the exponential conjugation and bilinear forms in, respectively, Propositions~\ref{p:expBd} and \ref{p:Bk-est}, along with the bootstrap assumption \eqref{nlinBootstrapAssump}. For instance, consider the third term of $B_k$, defined in \eqref{finalNF},
\[
|D|^{-\alpha} e^{iA}\D_x B_{k, II}^{exp, 2}(\phi, \phi).
\]
We use Proposition~\ref{p:Bk-est} to rewrite this with the $L$ notation. Here, the inputs to $B_{k, II}^{exp, 2}$ are already at frequency $2^k$ or lower, so we choose to put the derivative gain on the output at frequency $2^k$ (by setting $\gamma_3 = 1$) and write this as 
\[
|D|^{-\alpha} e^{iA}\D_x |D|^{-\alpha} L_k(|D|^{\alpha - 1} \phi, \phi).
\]
We then use Proposition~\ref{p:expBd} and the bootstrap assumption \eqref{nlinBootstrapAssump} to estimate
\begin{equation*}
\begin{aligned}
\||D|^{-\alpha} e^{iA}\D_x |D|^{-\alpha} L_k(|D|^{\alpha - 1} \phi, \phi)\|_S &\lesssim 2^{(1 - 2\alpha)k} \| |D|^{\alpha - 1} \phi_{(k', k)}\|_{L^\infty} \|\phi_k\|_S \\
&\lesssim 2^{(1 - 2\alpha)k} \cdot 2^{(\alpha - 1)k}  2^{\frac14(3\alpha - 1)k} C\eps c_k \cdot \|\phi_k\|_S \\
&\lesssim 2^{-\frac14(\alpha + 1)k} (C\eps)^2 d_k
\end{aligned}
\end{equation*}
as desired. The other terms of $B_k$ arising due to the exponential conjugation (the second and fourth terms of $B_k$) are simpler and estimated similarly. The discussion applies as well to $S_{lat}$.

The analysis also applies similarly to the first term $B_k^2$ of $B_k$ in \eqref{finalNF}, except here the inputs may include frequencies higher than $2^k$, so in such cases it is more economical to use Proposition~\ref{p:Bk-est} in a way that puts the derivative gain on the inputs. For instance, consider the second term in $B_k^2$ (provided in \eqref{Qk-def} in terms of $Q_k^2$), which consists of balanced frequencies higher than $2^k$. We set $\gamma_2 = 1$ in Proposition~\ref{p:Bk-est}, so that for inputs at frequency $2^j$, this term has the form
\[
2^{-\alpha j} e^{iA} L_k(\phi_j, \phi_j).
\]
Then we may estimate (using \eqref{inftyest2} for the second line and the slowly varying property of the frequency envelope in the last line)
\begin{equation*}
\begin{aligned}
\| e^{iA}\sum_{j \geq k}2^{-\alpha j} L_k(\phi_j, \phi_j)\|_S &\lesssim \sum_{j \geq k} 2^{-\alpha j}\| \phi_j\|_{L^\infty}  \|\phi_j\|_S \\
&\lesssim \sum_{j \geq k} 2^{-\alpha j}\cdot 2^{(\frac12 - s)j+} C\eps c_j  \cdot \|\phi_j\|_S \\
&\lesssim 2^{-\alpha k + (\half - \frac{3}{4}(1 - \alpha))k} (C\eps)^2 d_k \\
&= 2^{-\frac14(\alpha + 1)k} (C\eps)^2 d_k
\end{aligned}
\end{equation*}
as desired.

\

$b)$ We set $\gamma_2 = 1$ in Proposition~\ref{p:Bk-est} to obtain the derivative gain $2^{-\alpha j}$, and use Proposition~\ref{p:expBd} to absorb instances of the operator $e^{iA}$.

The proof of the estimate is similar to the discussion from part $a)$.

\

$c)$ The proof is similar to the proof of $a)$ using Propositions~\ref{p:expBd} and \ref{p:Bk-est}. For instance, considering again the third term of $B_k$,
\begin{equation*}
\begin{aligned}
\||D|^{-\alpha} e^{iA}\D_x |D|^{-\alpha} L_k(|D|^{\alpha - 1} \phi, \phi)\|_{H^s} &\lesssim 2^{(1 - 2\alpha)k} \| |D|^{\alpha - 1} \phi_{(k', k)}\|_{L^\infty} \|\phi_k\|_{H^s} \\
&\lesssim 2^{(1 - 2\alpha)k} \cdot 2^{(\alpha - 1)k}  2^{\frac14(3\alpha - 1)k} C\eps c_k \cdot \|\phi_k\|_{H^s} \\
&\lesssim 2^{-\frac14(\alpha + 1)k} (C\eps)^2 c_k.
\end{aligned}
\end{equation*}

\end{proof}

We now reduce the proof of the bootstrap estimate \eqref{nlinBootstrap} to the proof of the same estimate on $\tilde \psi_k^+$:
\begin{lemma} 

a) Assume the Strichartz bounds
\begin{equation}\label{psi-bootstrap-strich}
\begin{aligned}
d_k^{-2}\|\tilde \psi_k^+\|_{S \cap S_{lat}}^2 \lesssim \eps^2 + (C\eps)^3.
 \end{aligned}
\end{equation}
Then the same estimate holds for $\phi_k^+$. 

\

b) Assume the bilinear bounds
\begin{equation}\label{psi-bootstrap}
\begin{aligned}
d_{j}^{-1} d_{k}^{-1} 2^{\frac{\alpha}{2}\max(j, k)} \|\tilde \psi_{j}^+ \tilde \psi_{k}^+\|_{L^2} \lesssim \eps^2 + (C\eps)^3.
 \end{aligned}
\end{equation}
Then the same estimate holds for $\phi_{j}^+ \phi_{k}^+$.

\

c) Given the bootstrap assumption \eqref{nlinBootstrapAssump}, we have
\begin{equation}\label{psi-data}
\|\tilde \psi_k^+(0)\|_{H^s} \lesssim \eps c_k.
\end{equation}
\end{lemma}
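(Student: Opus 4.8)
The plan is to treat the three parts as consequences of the relation $\tilde\psi_k^+ = \psi_k^+ - B_k(\phi,\phi) = P_k^+ e^{iA}\phi_k^+ - B_k(\phi,\phi)$, invoking the $L^p$-boundedness of $e^{iA}$ (Proposition~\ref{p:expBd}) to pass between $\phi_k^+$ and $\psi_k^+$, and the quadratic correction bounds of Lemma~\ref{l:Bk-bootstrap} to pass between $\psi_k^+$ and $\tilde\psi_k^+$. For part c) I would start from $\tilde\psi_k^+(0) = P_k^+ e^{iA(0)}\phi_k^+(0) - B_k(\phi(0),\phi(0))$, apply $\|e^{iA}f\|_{L^2}\approx_\AA\|f\|_{L^2}$ (noting $\AA\in L^\infty_t$ by \eqref{inftyest2}, so the implicit constant is harmless) to get $\|P_k^+ e^{iA(0)}\phi_k^+(0)\|_{H^s}\lesssim_\AA 2^{sk}\|\phi_k(0)\|_{L^2}\lesssim \eps c_k$, and bound the correction by Lemma~\ref{l:Bk-bootstrap}c), which gives $2^{-\frac14(\alpha+1)k}(C\eps)^2 c_k \ll \eps c_k$ for $\eps$ small depending on $C$. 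Summing these two contributions gives \eqref{psi-data}.

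For part a), I would invert the two renormalizations. Writing $\phi_k^+ = e^{-iA}P_k^+(\tilde\psi_k^+ + B_k(\phi,\phi))$ — or, more carefully, since $e^{iA}$ is only a Fourier integral operator and not exactly invertible on a single frequency block, using that $P_k^+ e^{iA} P_k^+$ is invertible up to rapidly decaying tails as in the proof of Proposition~\ref{p:expBd} — one estimates $\|\phi_k^+\|_{S\cap S_{lat}}$ by $\|\tilde\psi_k^+\|_{S\cap S_{lat}} + \|B_k(\phi,\phi)\|_{S\cap S_{lat}}$ up to $e^{iA}$-boundedness. The Strichartz norms $S$ and $S_{lat}$ are built from mixed Lebesgue norms $L^p_tL^q_x$ and $L^p_xL^q_t$ with spatial derivative weights, and since $e^{iA}$ has a kernel bounded by $\langle x-y\rangle^{-2}$ uniformly in $t$ (Proposition~\ref{p:expBd}), convolution in $x$ is bounded on each such norm; the derivative weights $|D|^\sigma$ commute with the frequency localization and cost only $\lesssim 1$ after using $|e^{iA}f|\lesssim_\AA L(|f|)$ together with the fixed frequency support. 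Then Lemma~\ref{l:Bk-bootstrap}a) contributes $2^{-\frac14(1+\alpha)k}(C\eps)^2 d_k$, which is $\ll (C\eps)^3 d_k$ when combined with the trivial bound $(C\eps)^2 \lesssim \eps^2 + (C\eps)^3$ is not quite the point — rather one keeps it as $(C\eps)^2$ times a gain and absorbs it into the right side of \eqref{psi-bootstrap-strich}, since $(C\eps)^2 \lesssim \eps^2 + (C\eps)^3$ trivially. Squaring and multiplying by $d_k^{-2}$ yields the claim for $\phi_k^+$.

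For part b), I would expand $\phi_j^+\phi_k^+$ using $\phi_k^+ = e^{-iA}P_k^+\tilde\psi_k^+ + (\text{correction})$ and similarly for $\phi_j^+$, so that $\phi_j^+\phi_k^+$ is a sum of: the main term $(e^{-iA}P_j^+\tilde\psi_j^+)(e^{-iA}P_k^+\tilde\psi_k^+)$, two cross terms pairing one $\tilde\psi$ factor with one $B$-correction, and one term quadratic in the corrections. For the main term I would use the pointwise bound $|e^{iA}f\cdot g|\lesssim_\AA L(|f|,|g|)$ of Proposition~\ref{p:expBd} to reduce to $\|L(|\tilde\psi_j^+|, |\tilde\psi_k^+|)\|_{L^2}$ (with uncorrelated translations, as in the multilinear $L$-notation), which by \eqref{psi-bootstrap} is $\lesssim 2^{-\frac\alpha2\max(j,k)}(\eps^2 + (C\eps)^3)d_jd_k$; here I need that the $L$-form respects the bilinear $L^2$ bound via $\|L(u,v)\|_{L^2}\lesssim \sup_y\|uT_yv\|_{L^2}$, so I would either have stated \eqref{psi-bootstrap} already in the translated form (as $M(t_0)$ does) or note that the bilinear bound is translation-invariant. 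The cross terms are handled by Hölder, $\|B_k(\phi,\phi) \cdot \tilde\psi_j^+\|_{L^2} \lesssim \|B_k(\phi,\phi)\|_{L^\infty_x L^2_t \text{ or } S_{lat}}\|\tilde\psi_j^+\|_{\text{dual}}$, using Lemma~\ref{l:Bk-bootstrap}a) for the correction and the already-established Strichartz bounds for the $\tilde\psi$ factor; each such term carries an extra power of $C\eps$ and a frequency gain, so it fits into $(C\eps)^3$. The term quadratic in corrections is even smaller. Collecting everything and multiplying by $d_j^{-1}d_k^{-1}2^{\frac\alpha2\max(j,k)}$ gives \eqref{psi-bootstrap} for $\phi_j^+\phi_k^+$.

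The main obstacle is the passage through $e^{iA}$ in parts a) and b): unlike the Benjamin–Ono case where the gauge is a genuine multiplication operator and hence trivially bounded on every $L^p_tL^q_x$, here $e^{iA}$ is a pseudodifferential (FIO-type) operator, so "inverting" it on a frequency block requires the kernel decay $|K|\lesssim_\AA\langle x-y\rangle^{-2}$ from Proposition~\ref{p:expBd} and some care that the convolution structure is compatible with the anisotropic lateral norms $L^4_x L^\infty_t$ and $L^\infty_x L^2_t$ — in particular that convolving in $x$ against an $L^1_x$ kernel (uniformly in $t$) is bounded on these spaces, which follows from Minkowski's integral inequality since the $t$-norm is taken first in $S_{lat}$. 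Everything else is bookkeeping with the $L$-notation and the frequency-envelope gains already assembled in Lemma~\ref{l:Bk-bootstrap}.
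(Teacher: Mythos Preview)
Your treatment of parts a) and c) is essentially the same as the paper's and is fine: both reduce to the triangle inequality, the two-sided $L^p$ bound of Proposition~\ref{p:expBd}, and the correction estimate of Lemma~\ref{l:Bk-bootstrap}.

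There is, however, a real gap in your argument for part b). For the cross term $\psi_j^+ B_k(\phi,\phi)$ with $j>k$, your proposed H\"older estimate using only Lemma~\ref{l:Bk-bootstrap}a) gives at best
\[
\|\psi_j^+\|_{L^4_tL^\infty_x}\,\|B_k(\phi,\phi)\|_{L^4_tL^2_x}
\lesssim 2^{\frac14(1-\alpha)j}\,C\eps\,d_j\cdot 2^{-\frac14(1+\alpha)k}(C\eps)^2 d_k,
\]
and the required bound is $2^{-\frac{\alpha}{2}j}(C\eps)^3 d_j d_k$. Comparing exponents, this would force $j\le k$, so the estimate fails by the factor $2^{\frac14(1+\alpha)(j-k)}$, which is unbounded as $j-k\to\infty$. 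Using the lateral norms in $S_{lat}$ does not rescue this: the frequency gain coming from $B_k$ alone is always attached to $k$, not to $j$.

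The paper closes this by splitting the inputs of $B_k(\phi,\phi)$ according to their frequency relative to $j$. When both inputs lie at frequency $\ge j$, one invokes Lemma~\ref{l:Bk-bootstrap}b), which upgrades the gain to $2^{-\frac14(1+\alpha)j}$; then the H\"older argument you describe goes through. When the inputs lie at frequency $\ell$ with $k\le\ell<j$, one writes $B_k(\phi_\ell,\phi_\ell)=2^{-\alpha\ell}e^{iA}L_k(\phi_\ell,\phi_\ell)$, passes through $e^{iA}$ via Proposition~\ref{p:expBd} to obtain an $L$-form $L(\psi_j^+,\phi_\ell,\phi_\ell)$, and then applies the \emph{bilinear} bootstrap assumption \eqref{nlinBootstrapAssump} to the pair $(\phi_j,\phi_\ell)$ together with the pointwise bound \eqref{inftyest2} on the remaining $\phi_\ell$; this is where the gain $2^{-\frac{\alpha}{2}j}$ actually comes from. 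The case $j\le k$ is handled by a symmetric decomposition. In short, the cross terms cannot be estimated purely by Strichartz plus the $S$-bound on $B_k$; you must feed the bilinear bootstrap hypothesis back into them.
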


\begin{proof}
$a)$ From the definition above \eqref{finalNF}, we have
\[
\tilde \psi_k^+ = \psi_k^+ - B_k(\phi, \phi).
\]
Lemma~\ref{l:Bk-bootstrap} provides the sufficient bound for $B_k$, using that 
\[
d_k^{-1} \|B_k(\phi, \phi)\|_{S \cap S_{lat}} \lesssim 2^{-\frac14(1 + \alpha)k} (C\eps)^2 \ll (C\eps)^2
\]
for large $2^k$.

\

$b)$ We have
\[
\tilde \psi_{j}^+ \tilde \psi_{k}^+ = (\psi_j^+ - B_j(\phi, \phi)) (\psi_k^+ - B_k(\phi, \phi)).
\]
We estimate the cubic and higher terms to first reduce to $\psi_j^+ \psi_k^+$. Consider in particular 
\[
\psi_j^+B_k(\phi, \phi)
\]
with the other cases being similar. 

We first consider the case $j > k$, which we in turn reduce to two cases. When the inputs are also at frequency $\geq j$, we use the bootstrap assumption \eqref{inftyest} with Proposition~\ref{p:expBd} on $\psi_j^+$, and $b)$ of Lemma~\ref{l:Bk-bootstrap} on $B_k$:
\begin{equation*}
\begin{aligned}
\|\psi_j^+B_k(\phi_{\geq j}, \phi_{\geq j})\|_{L^2} \lesssim \|\psi_j^+\|_{L^4L^\infty} \|B_k(\phi_{\geq j}, \phi_{\geq j})\|_{S} &\lesssim 2^{\frac14(1 - \alpha)j}C\eps d_j \cdot 2^{-\frac14(1 + \alpha)j}(C\eps)^2 d_k\\
&= 2^{-\frac{\alpha}{2}j} (C\eps)^3 d_jd_k
\end{aligned}
\end{equation*}
which suffices.

It remains to estimate 
\begin{equation}\label{bjest}
\psi_j^+B_k(\phi_{< j}, \phi_{< j}),
\end{equation}
for which we use the bilinear bootstrap estimate. Here, we use Proposition~\ref{p:expBd} to apply bilinear estimates though the $e^{iA}$ conjugations, along with the gain from $b)$ of Lemma~\ref{l:Bk-bootstrap} to compensate for the loss from the pointwise estimate on the remaining third variable:
\begin{equation*}
\begin{aligned}
\|\psi_j^+B_k(\phi_{\ell}, \phi_{\ell})\|_{L^2} &= 2^{-\alpha \ell}\|\psi_j^+e^{iA} L_k(\phi_{\ell}, \phi_{\ell})\|_{L^2} \\
&\lesssim 2^{-\alpha \ell} \|L(\psi_j, \phi_{\ell}, \phi_{\ell})\|_{L^2} \\
&\lesssim 2^{-\alpha \ell}\cdot2^{-\frac{\alpha}{2}j}(C\eps)^2 d_jd_{\ell} \cdot 2^{\frac14(3\alpha - 1)\ell}C\eps c_{\ell} \\
&= 2^{-\frac{\alpha}{2}j}(C\eps)^3 d_jd_{\ell} \cdot 2^{-\frac14(1 + \alpha)\ell}.
\end{aligned}
\end{equation*}
Then summing, we obtain
\[
\|\psi_j^+B_k(\phi_{k < \cdot < j}, \phi_{k < \cdot < j})\|_{L^2} \lesssim 2^{-\frac{\alpha}{2}j}(C\eps)^3 d_jd_{k}
\]
which suffices. A similar bilinear estimate for 
\[
\psi_j^+B_k(\phi_{k}, \phi_{< k}), \qquad \psi_j^+B_k(\phi_{< k}, \phi_{k})
\]
addresses the remaining components of \eqref{bjest}. A similar case analysis using the bilinear bootstrap estimate can be applied to address case when $j \leq k$. 

To conclude, we use Proposition~\ref{p:expBd} to reduce from $\psi_j^+\psi_k^+$ to $\phi_j^+\phi_k^+$.


\

$c)$ We use part $c)$ of Lemma~\ref{l:Bk-bootstrap} to estimate $B_k$ and Proposition~\ref{p:expBd} to absorb $e^{iA}$.

\end{proof}

\subsection{Bounds on the source terms}

In this section, we bound the cubic and higher source terms $Q_k$ of the equation \eqref{finalError} for $\psi_k^+$.

\begin{lemma}\label{l:source-bd}
The source terms $Q_k$ of \eqref{finalError} satisfy the estimate
\[
\|Q_k\|_{L^1_tL^2_x} \lesssim (C\eps)^3 d_k.
\]
\end{lemma}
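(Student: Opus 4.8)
The plan is to estimate each of the three groups of terms in the definition of $Q_k$ separately, in every case reducing to trilinear $L$-forms via Proposition~\ref{p:expBd} (to dispose of the $e^{iA}$ conjugations) and then applying a combination of the three a priori bounds recorded in \eqref{inftyest}, \eqref{inftyest2}, \eqref{latest}, together with the bilinear bootstrap bound, distributing derivatives with the aid of Propositions~\ref{p:Bk-est} and Lemma~\ref{l:Bk-bootstrap}. The guiding principle is that every term of $Q_k$ is at least cubic, and that we have a full order of smoothing to spend: the normal form denominators $\Omega^{-1}$ give $|\xi_{\max}|^{-\alpha}$ (Lemma~\ref{l:approxOmega}), the exponential commutator $[(\D_t - |D|^\alpha \D_x), e^{iA}]$ acting on a frequency-$2^k$ object contributes the order-$(1-\alpha)$ and order-$0$ quadratic pieces catalogued in Lemma~\ref{l:expCom}, and the cubic remainders $Q_k^{exp,3+}$ already carry negative-order multipliers in $|D|$. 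Since $\alpha \ge 1$ and the data is subcritical, $s \ge \tfrac34(1-\alpha)$, these gains convert every trilinear expression into one that closes with at most two factors measured in $L^\infty_{t,x}$ or $L^4_t L^\infty_x$ and the third in $S$, or — when two high factors are balanced — one factor in $L^4_t L^\infty_x$ paired with the $L^2$ bilinear estimate on the other two.

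Concretely, I would proceed in three steps. \emph{Step 1: the cubic remainder $Q_k^{exp,3+}(\phi_k^+)$ and the conjugation commutator $[\phi_{\leq k'}\D_x, e^{iA}]\phi_k^+$.} For $Q_k^{exp,3+}$ one reads off from Lemma~\ref{l:expCom} that every summand is of the form $|D|^{-\sigma} \D_x \big(e^{iA}(\phi_{(k',k)} \cdot \text{(quadratic)} \cdot \phi_k^+)\big)$ with $\sigma \ge \alpha$ (or $|D|^{2-2\alpha}$ against two oscillatory integrals, which is harmless since $\alpha \ge 1$), so using $|e^{iA}g| \lesssim_\AA L(|g|)$ and the $L^\infty$ bounds \eqref{inftyest}–\eqref{inftyest2} on the low factors $\phi_{(k',k)}$, $P_{(k',k)}(\phi^2)$, one gains $2^{-k'}C\eps c_k$ from each low factor while the surviving high factor $\phi_k^+$ is placed in $S \subset L^4_t L^\infty_x \cap L^\infty_t L^2_x$; summing the geometric series in the low-frequency index and using $s \ge \tfrac34(1-\alpha)$ yields $(C\eps)^3 d_k$. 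The commutator $[\phi_{\leq k'}\D_x, e^{iA}]$ has a symbol gain of order $-1$ from the single $\D_\xi$ hitting $e^{ia}$, matching the $\D_x$, so it is of order $0$ with coefficient $\D_x \phi_{\leq k'}$ — an $L^\infty_t$ quantity of size $\lesssim_\AA 2^{-k'+} C\eps$ — times $L(|\phi_k^+|)$, again giving $(C\eps)^3 d_k$ after noting the extra low-frequency decay.

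\emph{Step 2: the four commutator-against-normal-form terms} $[(\D_t - |D|^\alpha\D_x), e^{iA}] B_{k,\cdot}$. Here I expand $[(\D_t - |D|^\alpha\D_x), e^{iA}]$ by Lemma~\ref{l:expCom} applied to $f = B_k^2(\phi,\phi)$ (resp. $B_{k,I}^{exp,2}$, etc.), which again produces $\phi_{(k',k)} \D_x f$ plus strictly lower-order and cubic corrections; the leading $\phi_{(k',k)}\D_x$ term, combined with the derivative-distribution freedom in Proposition~\ref{p:Bk-est} (choosing $\gamma_2 = 1$ to park $2^{-\alpha j}$ on the balanced high inputs, or $\gamma_3$ on the output), is handled exactly as in the proof of Lemma~\ref{l:Bk-bootstrap}: the correction $B_k$ is $O(2^{-\frac14(1+\alpha)k})(C\eps)^2 d_k$ in $S$, the commutator costs at most one derivative $2^{(1-\alpha)k} \le 1$ and supplies a low factor with $2^{-k'}C\eps$ smallness, and the last remaining $\phi$ factor goes in $L^4_t L^\infty_x$; multiplying out gives $(C\eps)^3 d_k$ with room to spare. \emph{Step 3: the bilinear terms $B_k(\D_x(\phi^2),\phi)$, $B_k(\phi,\D_x(\phi^2))$, and $\phi_{\leq k'}\D_x B_k(\phi,\phi)$.} For the first two, I treat $\D_x(\phi^2)$ as a quadratic input to the bilinear form $B_k$; by Proposition~\ref{p:Bk-est} the normal-form multiplier absorbs the derivative in $\D_x(\phi^2)$ against the $\Omega^{-1}$ gain, reducing to $L_k$ of three $\phi$'s, two of which (the ones forming $\phi^2$, which one paraproduct-decomposes so that at least one is low or the two are comparable) are estimated by the bilinear $L^2$ bound $\|\phi_j \phi_\ell\|_{L^2} \lesssim 2^{-\frac\alpha2 \max} \eps^2 d_j d_\ell$ and the third by $L^4_t L^\infty_x$, using Hölder in $t$; the balanced-frequency sub-case uses $2^{-\alpha j}$ from the $L_k(\phi_j,\phi_j)$ representation as in Lemma~\ref{l:Bk-bootstrap}b. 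The last term $\phi_{\leq k'}\D_x B_k$ pairs the $L^\infty_t$ coefficient $\phi_{\leq k'}$ (of size $\lesssim_\AA C\eps$, with $2^{-k'}$ decay) with $\D_x B_k = 2^k B_k$, and $2^k \cdot 2^{-\frac14(1+\alpha)k} = 2^{\frac34(1-\alpha)k} \lesssim 1$, so Lemma~\ref{l:Bk-bootstrap}a closes it.

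\emph{The main obstacle} will be Step 3 and specifically the careful bookkeeping inside $\D_x(\phi^2)$: one must paraproduct-decompose $\phi^2 = \sum \phi_{j_1}\phi_{j_2}$ and, in each frequency configuration, decide which two of the three resulting $\phi$-factors to feed into the bilinear $L^2$ estimate and which into $L^4_t L^\infty_x$ (or $L^\infty_{t,x}$), while tracking that the combined derivative budget — the $\D_x$, the normal-form smoothing $|\xi_{\max}|^{-\alpha}$, and the threshold $s \ge \tfrac34(1-\alpha)$ — always nets out nonnegatively and that the resulting frequency sums converge (which is where the slowly-varying property of $\{c_k\}$ and the strict inequality $\tfrac14(3\alpha-5) < \tfrac12(\alpha-1)$, equivalently $\alpha < 3$, are used). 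The conjugation-commutator expansions of Step 2 are mechanically involved but introduce no new analytic difficulty beyond what is already present in Lemmas~\ref{l:expCom} and \ref{l:Bk-bootstrap}; I would present Steps 1 and 2 tersely and devote the bulk of the write-up to the frequency-localized bilinear analysis of Step 3.
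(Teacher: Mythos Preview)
Your overall architecture matches the paper's: reduce to trilinear $L$-forms via Proposition~\ref{p:expBd}, then estimate using combinations of Strichartz, bilinear, and pointwise bounds. The paper's own proof is terse --- it works out one representative term of $Q_k^{exp,3+}$ in detail and declares the rest similar --- so your more systematic three-step breakdown is reasonable and essentially in the same spirit.

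However, there is a genuine gap in your treatment of the high--high--to--low interactions, namely contributions of the form $L_k(\phi_j, \phi_j, \phi_k^+)$ with $j > k$, which arise inside $P_{(k',k)}(\phi^2)$ in Step~1 and inside $\D_x(\phi^2)$ in Step~3. Your prescription --- bilinear $L^2$ on two factors paired with $L^4_t L^\infty_x$ on the third --- produces, after bookkeeping with $s \ge \tfrac34(1-\alpha)$, a factor $2^{(-\frac54 + \frac34\alpha)j}$, which fails to sum over $j > k$ once $\alpha > \tfrac53$. The paper handles this with an explicit case split: for $\alpha \le \tfrac53$ it uses exactly your bilinear-plus-Strichartz combination, but for $\alpha > \tfrac53$ it switches to the \emph{lateral} Strichartz estimates, placing both high-frequency factors $\phi_j$ in $L^\infty_x L^2_t$ and the remaining factor $\phi_k^+$ in $L^4_x L^\infty_t$, which yields instead $2^{\frac12(\alpha-3)j} 2^{k/4}$ and closes for $\alpha \ge \tfrac54$. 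You list \eqref{latest} among your tools in the opening paragraph but never deploy it; this split is precisely where it is needed, and without it the argument does not close across the full range $\alpha \in [1,2]$.

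A related issue in Step~1: you treat $P_{(k',k)}(\phi^2)$ as a ``low factor'' amenable to the pointwise bounds \eqref{inftyest}--\eqref{inftyest2}. The \emph{output} frequency is low, but the \emph{inputs} may be two balanced high frequencies $\phi_j\phi_j$ with $j \gg k$, and the $L^\infty$ norm of such a product is not summable in $j$ at the regularity threshold. This is the same configuration as above and requires the same $\alpha$-dependent case split; the paper's worked example is exactly this term.
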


\begin{proof}

We consider the terms in $Q_k$, starting with $Q_k^{exp, 3+}(\phi_k^+)$, which is provided in Lemma~\ref{l:expCom}. Here we demonstrate an estimate for the third term of $Q_k^{exp, 3+}(\phi_k^+)$, which is typical and possesses the full range of possible frequencies for the input $\phi$'s:
\[
|D|^{-\alpha}  e^{iA} \D_x P_{(k', k)} (\phi^2) \phi_k^+ = |D|^{-\alpha}  e^{iA} \D_x L_k(\phi, \phi, \phi_k^+).
\]
Using Proposition~\ref{p:expBd}, it suffices to show
\begin{equation}\label{source-est}
\|L_k(\phi, \phi, \phi_k^+) \|_{L^1_tL^2_x} \lesssim  2^{k(\alpha - 1)}(C\eps)^3 d_k.
\end{equation}

\

For the low frequency component on the first two inputs, we use Strichartz twice:
\begin{equation*}
\begin{aligned}
\|L_k(\phi_{\leq k}, \phi_{\leq k}, \phi_k^+)\|_{L^1_tL^2_x} &\lesssim \|\phi_{\leq k}\|_{L^4_t L^\infty_x}^2 \|\phi_k^+\|_{L^\infty_t L^2_x} \lesssim 2^{-(1 - \alpha)k} (C\eps)^2 \cdot C\eps d_k
\end{aligned}
\end{equation*}
as desired.

For the remaining high frequency components, let $j > k$ and consider
\[L_k(\phi_{j}, \phi_{j}, \phi_k^+).\]
When $\alpha \leq \frac53$, we use bilinear estimates on the latter two variables and Strichartz for the first:
\begin{equation*}
\begin{aligned}
\|L_k(\phi_{j}, \phi_{j}, \phi_k^+)\|_{L^1_tL^2_x} &\lesssim  2^{-\half(1 - \alpha)j} C\eps c_{j} \cdot 2^{-\frac{\alpha}{2} j}(C\eps)^2 d_{j} d_k \\
&\lesssim 2^{-\half j} 2^{-\frac34(1 - \alpha)j} (C\eps)^3 d_k \\
&= 2^{(-\frac54 + \frac34\alpha) j} (C\eps)^3 d_k \\
&\lesssim (C\eps)^3 d_k.
\end{aligned}
\end{equation*}
When $\alpha \geq \frac54$, and in particular when $\alpha > \frac53$, we can use lateral Strichartz estimates, estimating $\phi_k^+$ in $L^4_x L^\infty_t$, and both instances of $\phi_j$ in $L^\infty_x L^2_t$. Precisely, using the bootstrap estimate \eqref{latest},
\begin{equation*}
\begin{aligned}
\|L_k(\phi_{j}, \phi_{j}, \phi_k^+)\|_{L^1_tL^2_x} &\lesssim \|L_k(\phi_{j}, \phi_{j}, \phi_k^+)\|_{L^2_{t, x}} \\
&\lesssim  2^{\frac12(\alpha - 3)j} (C \eps c_j)^2 \cdot 2^{\frac{k}{4}} C \eps d_k \\
&\lesssim 2^{\frac12(\alpha - 3)j} 2^{\frac{k}{4}} (C\eps)^3 d_k \\
&\lesssim 2^{k(\alpha - 1)}(C\eps)^3 d_k.
\end{aligned}
\end{equation*}

\

The other terms of $Q_k$ are treated similarly using Proposition~\ref{p:expBd} to treat the exponential conjugations, and Proposition~\ref{p:Bk-est} to estimate the bilinear forms. We also use Lemma~\ref{l:expCom} to see that $[(\D_t - |D|^\alpha \D_x), e^{iA}]$ is an operator of order 1.

\end{proof}

\subsection{Closing the bootstrap}

To complete the proof of the bootstrap estimate \eqref{nlinBootstrap}, it remains to prove the estimates \eqref{psi-bootstrap-strich} and \eqref{psi-bootstrap} on $\tilde \psi_k^+$. For the former, we apply Theorem~\ref{t:strichartz-transport} and a straightforward energy estimate to obtain
\[
\|\tilde \psi_k^+\|_{S \cap S_{lat}} \lesssim \|\tilde \psi_k^+(0)\|_{L^2} + \|Q_k\|_{L^1L^2}.
\]
Using \eqref{psi-data} for $\tilde \psi_k^+(0)$ and Lemma~\ref{l:source-bd} for $Q_k$, we obtain
\begin{equation}\label{psi-bootstrap-strich-pf}
\|\tilde \psi_k^+\|_{S \cap S_{lat}} \lesssim \eps d_k + (C\eps)^3 d_k
\end{equation}
as desired.

For the bilinear estimate \eqref{psi-bootstrap}, we likewise apply Proposition~\ref{p:bilinear} to obtain
\begin{equation*}
\begin{aligned}
 \|\tilde \psi_j^+\tilde \psi_k^+\|_{L^2} &\lesssim 2^{-\frac{\alpha}{2}\max(j, k)}(\|\tilde \psi_j^+\|_{L^\infty L^2}\|\tilde \psi_k^+\|_{L^\infty L^2)} \\
&\quad + \|Q_j\|_{L^1L^2}\|\tilde \psi_k^+\|_{L^\infty L^2}  + \|Q_k\|_{L^1L^2}\|\tilde \psi_j^+\|_{L^\infty L^2}). 
\end{aligned}
\end{equation*}
Then using \eqref{psi-bootstrap-strich-pf} and Lemma~\ref{l:source-bd}, we obtain
\[
 \|\tilde \psi_j^+\tilde \psi_k^+\|_{L^2} \lesssim 2^{-\frac{\alpha}{2}\max(j, k)}(\eps^2 + (C\eps)^3 \eps  + (C\eps)^6 )d_j d_k
\]
as desired.

\section{Estimates for the linearized equation}

In this section we prove a priori bounds for the linearized equation for \eqref{BO},
\begin{equation}\label{linBO}
 (\D_{t} - |D|^\alpha \D_x) v = \D_x (\phi v). 
\end{equation}
Before stating the theorem, we remark that a frequency envelope $\{c_k \}_{k = 0}^\infty$ can serve as the frequency envelope for two functions $\phi_0$ and $v_0$ simultaneously by taking the maximum of two individual envelopes.

\begin{theorem}\label{t:linear}
Let $\alpha \in [1, 2]$, $s \geq \frac{3}{4}(1 - \alpha)$, and $\phi$ be a smooth solution to \eqref{BO} on $I = [0, 1]$ with small initial data,
\[
\|\phi_0\|_{H^s} \leq \eps.
\]
Further, let $v$ be an $H^{s - \frac12}$ solution to \eqref{linBO}, and $\{c_k \}_{k = 0}^\infty \in \ell^2$ so that $\eps c_k$ is a frequency envelope for $\phi_0 \in H^s$ and $c_k$ is a frequency envelope for $v_0 \in H^{s - \frac12}$. Also denote $d_k = 2^{-sk} c_k$. Then we have

\begin{enumerate}[a)]
\item the Strichartz and lateral Strichartz bounds
\[
\|v_k\|_{S \cap S_{lat}} \lesssim 2^\frac{k}{2} d_k,
\]
\item the bilinear bound
\[
\|v_j \phi_k\|_{L^2} \lesssim 2^\frac{j}{2} 2^{-\frac{\alpha}{2} \max(j,k)} \eps d_j d_k, \qquad j \neq k.
\]
\end{enumerate}
\end{theorem}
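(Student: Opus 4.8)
The plan is to mirror the proof of Theorem~\ref{t:nonlinear}, viewing the linearized equation \eqref{linBO} as a dispersive flow transported by the coefficient $\phi$, which is now controlled \emph{a priori} by Theorem~\ref{t:nonlinear}. After the scaling \eqref{scaleInv} together with the induced scaling of $v$, we may assume $\|\phi_0\|_{H^s}\le\eps$ and work on $[0,1]$; Theorem~\ref{t:nonlinear} then provides the Strichartz, lateral Strichartz and bilinear bounds for $\phi_k$, as well as $\AA\in L^\infty_t$ and $\BB\in L^4_t$. I would run a continuity argument for
\[
N(t_0):=\sup_k 2^{-k}d_k^{-2}\|v_k\|_{S([0,t_0])\cap S_{lat}([0,t_0])}^2+\sup_{j\neq k}\sup_y 2^{-j}2^{\alpha\max(j,k)}(\eps d_jd_k)^{-2}\|v_jT_y\phi_k\|_{L^2([0,t_0])}^2,
\]
which is continuous in $t_0$ with $\lim_{t_0\to0}N(t_0)\lesssim1$, so that it suffices to prove $N(t_0)\lesssim1+(C\eps)^2N(t_0)$ under the bootstrap hypothesis $N(t_0)\le C^2$ and then choose $\eps$ small depending on $C$.

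Following Section~\ref{s:nf}, I would split $\D_x(\phi v)$ at output frequency $2^k$ into (i) the order-one transport term $\phi_{<k}\D_x v_k^+$; (ii) the order-one term $v_{(0,k)}\D_x\phi_k^+$, with the derivative now falling on the high-frequency factor $\phi_k^+$; and (iii) paradifferential terms of order $\le0$ and order $1-\alpha$, which are the exact analogues of $Q_k^2$ and of the residual forms $Q_{k,\cdot}^{exp,2}$, but now carrying one argument $\phi$ and one argument $v$. Term (i) is removed by the same pseudodifferential gauge: set $\psi_k^+:=P_k^+e^{iA}v_k^+$ with $e^{iA}$ built from $\Phi$ as in \eqref{a-def}--\eqref{a-def-op}, and apply Lemma~\ref{l:expCom} with $f=v_k^+$; this leaves on the left the residual low--high transport $\phi_{\le k'}\D_x\psi_k^+$, and on the right the residual quadratic forms $Q_{k,\cdot}^{exp,2}(\phi,v_k^+)$ together with perturbative cubic terms $Q_k^{exp,3+}(v_k^+)$. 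Terms (ii) and (iii), together with these $Q_{k,\cdot}^{exp,2}$ forms, are then removed by subtracting a combined normal form correction $B_k^{lin}(\phi,v)$ built, as in \eqref{finalNF}, from $e^{iA}$ and the bilinear forms $B_k^2$, $B_{k,\cdot}^{exp,2}$ and $B_k^{lin,2}$ --- the last of which was introduced in \eqref{Bk-def}--\eqref{Qk-def} to treat precisely the term $v_{(0,k)}\D_x\phi_k^+$. This produces a renormalized variable $\tilde\psi_k^+:=\psi_k^+-B_k^{lin}(\phi,v)$ solving
\[
\bigl((\D_t-|D|^\alpha\D_x)-\phi_{\le k'}\D_x\bigr)\tilde\psi_k^+=Q_k^{lin},
\]
where $Q_k^{lin}$ collects only cubic-and-higher terms: commutators with $e^{iA}$, terms in which a time derivative was replaced using \eqref{BO} for $\phi$ or \eqref{linBO} for $v$, and one application of $\phi_{\le k'}\D_x$ to the correction. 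Each such term has two $\phi$ factors and one $v$ factor, so Propositions~\ref{p:expBd} and \ref{p:Bk-est} --- used exactly as in Lemmas~\ref{l:Bk-bootstrap} and \ref{l:source-bd}, with one $\phi$ replaced by $v$ --- give $\|Q_k^{lin}\|_{L^1_tL^2_x}\lesssim C\eps\,N(t_0)^{1/2}\,2^{k/2}d_k$ and the data bound $\|\tilde\psi_k^+(0)\|_{L^2}\lesssim2^{k/2}d_k$.

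To close, I would check that, restricted to frequency $2^k$, the operator $(\D_t-|D|^\alpha\D_x)-\phi_{\le k'}\D_x$ fits the hypotheses of Theorem~\ref{t:strichartz-transport} and Proposition~\ref{p:bilinear} with $m=1+\alpha\in[2,3]$ and $\delta=\half(1-\alpha)$: the coefficient $b\sim\phi_{\le k'}$ satisfies \eqref{L1-symbol-b}--\eqref{Linfty-symbol-b} because its frequency support lies in $|\xi|\lesssim2^{k'}\le1$, and because its spatial derivatives and --- via \eqref{BO} --- its time derivatives are controlled by the $L^\infty$-type bounds on $\phi$ from Theorem~\ref{t:nonlinear}. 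Then \eqref{direct-strich}--\eqref{lat-strich} and an energy estimate yield
\[
\|\tilde\psi_k^+\|_{S\cap S_{lat}}\lesssim\|\tilde\psi_k^+(0)\|_{L^2}+\|Q_k^{lin}\|_{L^1_tL^2_x}\lesssim(1+C\eps\,N(t_0)^{1/2})2^{k/2}d_k,
\]
while Proposition~\ref{p:bilinear}, applied to $\tilde\psi_j^+$ and to the corresponding renormalization of $\phi_k^+$, gives the matching bilinear bound; the estimates on $B_k^{lin}(\phi,v)$ (the analogue of Lemma~\ref{l:Bk-bootstrap}, with the $\phi$ factor contributing a power of $C\eps$ and the correction a negative power of $2^k$) then transfer both bounds back to $v_k^+$ and close the bootstrap. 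The main obstacle I expect is bookkeeping rather than conceptual: one must verify that every Strichartz, lateral Strichartz and bilinear gain exactly absorbs the additional $2^{k/2}$ coming from $v$ lying half a derivative below the $H^s$ threshold --- in particular in the cubic source terms in which $v$ occupies a high-frequency slot --- and that the time regularity of $\phi_{\le k'}$ is genuinely strong enough to avoid the microlocal time scales discussed in the remark following Theorem~\ref{t:strichartz-transport}.
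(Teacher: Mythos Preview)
Your approach is essentially identical to the paper's: the same bootstrap quantity, the same exponential gauge $w_k^+=P_k^+e^{iA}v_k^+$, the same normal-form correction built from $B_k^2$, $B_{k,\cdot}^{exp,2}$ and $B_k^{lin,2}$, and the same appeal to Theorem~\ref{t:strichartz-transport} and Proposition~\ref{p:bilinear} to close.

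One point you have glossed over: your decomposition (i)--(iii) does not account for the piece $v_{<0}\,\D_x\phi_k^+$ of the low--high interaction (your term (ii) covers only $v_{(0,k)}\D_x\phi_k^+$), so your claim that $Q_k^{lin}$ ``collects only cubic-and-higher terms'' with ``two $\phi$ factors and one $v$ factor'' is not quite right. This residual \emph{quadratic} term cannot be removed by $B_k^{lin,2}$ --- by Proposition~\ref{p:Bk-est} that correction carries $|D|^{-1}$ on the $v$ slot, which is unbounded at zero frequency --- so the paper simply leaves $e^{iA}(v_{<0}\,\D_x\phi_k^+)$ in the source $Q_k^{lin}$ and bounds it directly via the bilinear bootstrap estimate, obtaining
\[
\|e^{iA}(v_{<0}\,\D_x\phi_k^+)\|_{L^1_tL^2_x}\lesssim 2^{-\frac{\alpha}{2}k}C\eps\, d_k,
\]
which is more than sufficient. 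This is a small and easily patched omission; the remainder of your outline is correct.
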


The proof of Theorem~\ref{t:linear} largely follows that of the corresponding nonlinear result, Theorem~\ref{t:nonlinear}, using a combined normal form analysis with a reduction to a bootstrap argument. We review the steps below.

\subsection{The bootstrap argument} 

We proceed in the same manner as for the nonlinear equation, setting up the proof of Theorem~\ref{t:linear} using a standard continuity argument. For $t_0 \in (0, 1]$, we define
\begin{equation*}
\begin{aligned}
M(t_0) &:= \sup_k 2^{-\frac{k}{2}}d_k^{-1} \|v_k\|_{S([0, t_0]) \cap S_{lat}([0, t_0])} + \sup_{k \neq j \in \N} \sup_y \eps^{-1} 2^{-\frac{j}{2}}2^{\frac{\alpha}{2}\max(j, k)} d_{j}^{-1} d_{k}^{-1}\|v_{j} T_y\phi_{k}\|_{L^2([0, t_0])}.
\end{aligned}
\end{equation*}

Then Theorem~\ref{t:linear} reduces to showing
\begin{equation}\label{linBootstrap}
M(t_0) \lesssim 1 + \eps C
\end{equation}
under the bootstrap assumption
\begin{equation}\label{linBootstrapAssump}
M(t_0) \leq C.
\end{equation}

\subsection{The combined renormalization}

In analogy with the nonlinear counterpart \eqref{BO2}, we rewrite \eqref{linBO} as a frequency-localized paradifferential equation, 
\begin{equation*}
(\D_t - |D|^\alpha \D_x) v_k^+ - \phi_{< k} \D_x v_k^+ = Q_k^2(\phi, v) + Q_k^2(v, \phi) + v_{< k} \D_x \phi_k^+.
\end{equation*}

Our analysis proceeds in a way similar to the nonlinear equation. We apply an exponential conjugation to reduce the order of most of the paradifferential quadratic term $\phi_{< k} \D_x v_k^+$ on the left hand side. The residual terms will include perturbative cubic terms, a quadratic term which may be viewed as a transport term with very low frequency coefficient, and better-balanced quadratic terms. 

These residual quadratic terms, along with most of the existing ones enumerated above, can be treated via paradifferential normal forms. However, observe that here we have an extra quadratic term $ v_{< k} \D_x \phi_k^+$ relative to the nonlinear analysis. This will also be treated in part with a normal form, but the component
\[
v_{< 0} \D_x \phi_k^+
\] 
can be treated perturbatively using bilinear estimates.

\

We begin with the exponential conjugation. Recall that $\Phi$ and $a$ are defined in \eqref{Phi-def} and \eqref{a-def}, respectively. Then we define the exponentially conjugated linearized variable,
\begin{equation}\label{w-def}
w_k^+ := P^+_k e^{iA} v_k^+
\end{equation}
which satisfies
\begin{equation}\label{expErrorLin}
\begin{aligned}
(\D_t - |D|^\alpha \D_x - \phi_{\leq k'} \D_x) w_k^+ &= P^+_k( e^{iA} (Q_k^2(\phi, v) + Q_k^2(v, \phi) + \phi_{(k', k)} \D_x v_k^+ + v_{< k} \D_x \phi_k^+) \\
&\quad + [(\D_t - |D|^\alpha \D_x) - \phi_{\leq k'} \D_x, e^{iA}] v_k^+).
\end{aligned}
\end{equation}
Using Lemma~\ref{l:expCom}, the commutator with $(\D_t - |D|^\alpha \D_x)$ exhibits cancellation with the first order quadratic term $\phi_{(k',k)} \D_x v_k^+$. Precisely, we have
\begin{equation}\label{expError2lin}
\begin{aligned}
(\D_t - |D|^\alpha \D_x - \phi_{\leq k'} \D_x) w_k^+ &= P^+_k( e^{iA} (Q_k^2(\phi, v) + Q_k^2(v, \phi) + v_{< k} \D_x \phi_k^+) \\
&\quad - [\phi_{\leq k'} \D_x, e^{iA}] v_k^+ \\
&\quad - (1 + \alpha)^{-1}e^{iA} Q_{k, I}^{exp, 2}(\phi, v) \\
&\quad + (1 + \alpha)^{-1} |D|^{-\alpha} e^{iA}\D_x Q_{k, II}^{exp, 2}(\phi, v) \\
&\quad -\frac{\alpha(\alpha - 1)}{2(1 + \alpha)} \D_x |D|^{-\alpha}e^{iA}\int_0^1(1 - h) Q_{k, h}^{exp, 2}(\phi, v) \, dh \\
&\quad + Q_k^{exp, 3+}(v_k^+)).
\end{aligned}
\end{equation}

Then we define the combined renormalization
\[
\tilde w_k^+ := w_k^+ - B_k^{lin}(\phi, v)
\]
where
\begin{equation}\label{finalNFlin}
\begin{aligned}
B_k^{lin}(\phi, v) &:= P^+_k \bigg(e^{iA} (B_k^2(\phi, v) + B_k^2(v, \phi) + B_k^{lin, 2}(v, \phi)) \\
&\quad \phantom{\psi_k^+ - P^+_k \Big[} - (1 + \alpha)^{-1}e^{iA} B_{k, I}^{exp, 2}(\phi, v) \\
&\quad \phantom{\psi_k^+ - P^+_k \Big[} + (1 + \alpha)^{-1} |D|^{-\alpha} e^{iA}\D_x B_{k, II}^{exp, 2}(\phi, v) \\
&\quad \phantom{\psi_k^+ - P^+_k \Big[} -\frac{\alpha(\alpha - 1)}{2(1 + \alpha)} \D_x |D|^{-\alpha}e^{iA}\int_0^1(1 - h) B_{k, h}^{exp, 2}(\phi, v) \, dh\bigg),
\end{aligned}
\end{equation}
so that the quadratic contributions of the normal form corrections cancel most of the quadratic terms on the right hand side of \eqref{expError2lin}. Similar to the setting of the corrected nonlinear variable $\tilde \psi_k^+$, this leaves perturbative cubic and higher order contributions, which arise from 
\begin{itemize}
\item commutators with the exponential operator $e^{iA}$, 
\item terms containing $\D_x(\phi^2)$ and $\D_x (\phi v)$, introduced by using the equation \eqref{BO} and the linearized equation \eqref{linBO} respectively to replace time derivatives, and 
\item one instance of $\phi_{\leq k'} \D_x$ applied to the correction $B_k^{lin}$. 
\end{itemize}
The main difference with the corrected nonlinear variable $\tilde \psi_k^+$ is that here, we have a quadratic remainder,
\begin{itemize}
\item the contribution from the component $v_{< 0} \D_x \phi_k^+$ of the term $v_{< k} \D_x \phi_k^+$ on the right hand side of \eqref{expError2lin}, which is left uncorrected.
\end{itemize}
Precisely, the combined renormalization $\tilde w_k^+$ satisfies
\begin{equation}\label{wfinalError}
(\D_t - |D|^\alpha \D_x - \phi_{\leq k'} \D_x) \tilde w_k^+ = Q_k^{lin}
\end{equation}
where 
\begin{equation*}
\begin{aligned}
Q_k^{lin} &:= P^+_k \bigg(e^{iA}( v_{< 0} \D_x \phi_k^+) + Q_k^{exp, 3+}(v_k^+) - [\phi_{\leq k'} \D_x, e^{iA}] v_k^+ \\
&\quad \phantom{P^+_k \Big[} -[(\D_t - |D|^\alpha \D_x), e^{iA}] (B_k^2(\phi, v) + B_k^2(v, \phi) + B_k^{lin, 2}(v, \phi)) \\
&\quad \phantom{P^+_k \Big[} +(1 + \alpha)^{-1}[(\D_t - |D|^\alpha \D_x), e^{iA}] B_{k, I}^{exp, 2}(\phi, v) \\
&\quad \phantom{P^+_k \Big[} -(1 + \alpha)^{-1}|D|^{-\alpha} [(\D_t - |D|^\alpha \D_x), e^{iA}]\D_x B_{k, II}^{exp, 2}(\phi, v) \\
&\quad \phantom{P^+_k \Big[} +\frac{\alpha(\alpha - 1)}{2(1 + \alpha)}\D_x |D|^{-\alpha} [(\D_t - |D|^\alpha \D_x), e^{iA}]\int_0^1(1 - h) B_{k, h}^{exp, 2}(\phi, v) \, dh \bigg) \\
&\quad- \half (B_k^{lin}(\D_x (\phi^2), v) + B_k^{lin}(\phi, \D_x (\phi v))) \\
&\quad + \phi_{\leq k'} \D_x B_k^{lin}(\phi, v).
\end{aligned}
\end{equation*}

\subsection{Bounds on the normal form variable}

In this section we reduce the bootstrap for $v$ to the same problem for the normal form variable $\tilde w_k^+$. Precisely, we reduce \eqref{linBootstrap} to the same estimate for the renormalized variable $\tilde w_k^+$, and on the other hand, also show that the bootstrap assumption \eqref{linBootstrapAssump} for $v$ implies estimates on the initial data for $\tilde w_k^+$.

\

We first establish estimates on the bilinear correction $B_k^{lin}$ defined in \eqref{finalNFlin}.

\begin{lemma}\label{l:Bk-bootstraplin}
a) We have
\[
\|B_k^{lin}(\phi, v)\|_{S \cap S_{lat}}\lesssim 2^{\frac14(1 - \alpha)k} C \eps d_k.
\]

b) For $j \geq k$, we may write
\[
B_k^{lin}(\phi_j, v_j) = 2^{-\alpha j} L_k(\phi_j, v_j)
\] 
and estimate
\[
\|B_k^{lin}(\phi_{\geq j}, v_{\geq j})\|_{S \cap S_{lat}}\lesssim 2^{\frac14(1 - \alpha)j} C \eps d_k.
\]

c) We have the initial data estimate
\[
\|B_k^{lin}(\phi(0), v(0))\|_{H^s} \lesssim 2^{\frac14(1 - \alpha)k} C\eps c_k.
\] 
\end{lemma}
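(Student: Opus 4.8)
The plan is to mirror the proof of Lemma~\ref{l:Bk-bootstrap} from the nonlinear analysis, tracking the extra half-derivative of regularity carried by $v$ relative to $\phi$. Recall from the hypotheses that $\eps c_k$ is a frequency envelope for $\phi_0$ while $c_k$ (not $\eps c_k$) is a frequency envelope for $v_0$, so under the bootstrap assumption \eqref{linBootstrapAssump} we have the Strichartz bounds $\|\phi_k\|_{S\cap S_{lat}}\lesssim \eps d_k$ and $\|v_k\|_{S\cap S_{lat}}\lesssim 2^{k/2}d_k C$, together with the pointwise consequences $\|\phi_k\|_{L^4_tL^\infty_x}\lesssim 2^{-k'}C\eps c_k$, $\|\phi_k\|_{L^\infty_{t,x}}\lesssim 2^{\frac14(3\alpha-1)k}C\eps c_k$ and the analogous bounds for $v_k$ with an extra factor $2^{k/2}$ and with $\eps$ removed. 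The term $B_k^{lin}$ differs from $B_k$ only in that one slot is occupied by $v$ and in the presence of the extra piece $e^{iA}B_k^{lin,2}(v,\phi)$; Proposition~\ref{p:Bk-est} supplies the symbol bounds for all the bilinear forms, including $B_k^{lin,2}(v,\phi) = L_k(|D|^{-1}v, |D|^{1-\alpha}\phi)$, and Proposition~\ref{p:expBd} absorbs every instance of $e^{iA}$ into an $L$-form.

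For part a), I would go term by term exactly as in the proof of Lemma~\ref{l:Bk-bootstrap}a). For the pieces coming from the exponential conjugation — e.g. $|D|^{-\alpha}e^{iA}\D_x B_{k,II}^{exp,2}(\phi,v)$ — the inputs sit at frequency $\lesssim 2^k$, so one uses Proposition~\ref{p:Bk-est} with $\gamma_3 = 1$ to put the derivative gain on the output, estimates the $\phi$ (coefficient) input in $L^\infty$ using \eqref{inftyest2}, and the $v$ input in $S\cap S_{lat}$ using the bootstrap assumption. The coefficient slot contributes a factor $\lesssim 2^{(\alpha-1)k}2^{\frac14(3\alpha-1)k}C\eps c_k$ as before, and the $v$-slot contributes $2^{k/2}d_k$ rather than $d_k$; combined with the output gain $2^{-\alpha k}$ and the operator orders this produces $2^{-\frac14(\alpha+1)k}\cdot 2^{k/2}C\eps d_k = 2^{\frac14(1-\alpha)k}C\eps d_k$, matching the claim. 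For the $B_k^2(\phi,v)$, $B_k^2(v,\phi)$ and $B_k^{lin,2}(v,\phi)$ terms, where the inputs may have frequency larger than $2^k$, one instead puts the derivative gain on the high-frequency input (as in the balanced-frequency subcase of Lemma~\ref{l:Bk-bootstrap}a), estimates the $\phi$ factor pointwise and the $v$ factor in $S\cap S_{lat}$, and sums the resulting geometric series in $j\ge k$ — since $s\ge\frac34(1-\alpha)$, the exponent $-\alpha j + (\frac12 - s)j + \frac{j}{2}$ is $\le -\frac14(\alpha+1)j + \frac{j}{2} = \frac14(1-\alpha)j \le \frac14(1-\alpha)k$ after using the slowly-varying property, again giving the stated bound. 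Part b) is the same computation restricted to inputs at a single dyadic scale $2^j$ with $j\ge k$: Proposition~\ref{p:Bk-est} with $\gamma_2=1$ (or $\gamma_1=1$ for the $B_k^{lin,2}$ term) yields the factored form $B_k^{lin}(\phi_j,v_j)=2^{-\alpha j}L_k(\phi_j,v_j)$, and summing over $\ge j$ gives the claimed $2^{\frac14(1-\alpha)j}C\eps d_k$. Part c) repeats the part a) estimates with $\|\cdot\|_{S\cap S_{lat}}$ replaced by $\|\cdot\|_{H^s}$ and the Strichartz bootstrap bounds replaced by the data bounds $\|\phi_k(0)\|_{H^s}\lesssim\eps c_k$, $\|v_k(0)\|_{H^{s-1/2}}\lesssim c_k$, i.e.\ $\|v_k(0)\|_{H^s}\lesssim 2^{k/2}c_k$.

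The only genuine subtlety — and the step I expect to require the most care — is the new term $e^{iA}B_k^{lin,2}(v,\phi)$, which has no analogue in the nonlinear lemma. By Proposition~\ref{p:Bk-est} this equals $e^{iA}L_k(|D|^{-1}v, |D|^{1-\alpha}\phi)$, i.e.\ the $v$-input is a \emph{low}-frequency factor $v_{(0,k)}$ (recall $\hat Q_k^{lin,2}$ carries $\hat P_{(0,k)}(\xi_1)$), while the output is $\phi_k^+$ at frequency $2^k$. Here one must be careful that $v_{(0,k)}$ should be estimated in $L^4_tL^\infty_x$ (using $\|v_j\|_{L^4_tL^\infty_x}\lesssim 2^{\frac14(1-\alpha)j}\cdot 2^{j/2}C\eps^{-1}\cdot\eps c_j$, wait — rather $\lesssim 2^{\frac14(1-\alpha)j}2^{j/2}Cc_j$ since the envelope for $v$ lacks the $\eps$; but $B_k^{lin,2}$ appears inside $B_k^{lin}(\phi,v)$ which is bilinear in $(\phi,v)$, so the $\phi_k^+$ output carries the $\eps$) and $\phi_k^+$ in $S$, with the operator $|D|^{-1}$ on $v$ and $|D|^{1-\alpha}$ on $\phi$ supplying exactly the derivative bookkeeping. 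After summing the low-frequency scales $0\le j < k$ — which converges because $|D|^{-1}$ beats the $L^\infty$ growth of $v_j$ — one lands on $2^{\frac14(1-\alpha)k}C\eps d_k$ as required. The remaining terms of $B_k^{lin}$ are handled by the verbatim analogues of the Lemma~\ref{l:Bk-bootstrap} estimates, so I would state those briefly and concentrate the written proof on the $B_k^{lin,2}$ term.
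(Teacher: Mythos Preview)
Your proposal is correct and follows essentially the same approach as the paper: both mirror the proof of Lemma~\ref{l:Bk-bootstrap}, using Propositions~\ref{p:expBd} and \ref{p:Bk-est} together with Theorem~\ref{t:nonlinear} for $\phi$ and the bootstrap assumption \eqref{linBootstrapAssump} for $v$, tracking the extra $2^{k/2}$ from $v$. You actually go further than the paper by explicitly treating the new term $e^{iA}B_k^{lin,2}(v,\phi)$ (the paper simply says ``the rest of the proof is a similar adaptation''); note, as a minor point, that for part b) this term vanishes for $j>k$ due to the built-in frequency projections, so no separate argument is needed there.
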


\begin{proof}

The proof is similar to that of Lemma~\ref{l:Bk-bootstrap}, except with the linearized variable $v$ appearing in place of the second $\phi$ in the appropriate instances. We consider the proof of $a)$, which is typical.

Like the nonlinear setting, the estimate combines bounds on the exponential conjugation and bilinear forms in, respectively, Propositions~\ref{p:expBd} and \ref{p:Bk-est}, along with Theorem~\ref{t:nonlinear} to estimate $\phi$ and the bootstrap assumption \eqref{linBootstrapAssump} to estimate $v$. For instance, consider the fifth term of $B_k^{lin}$, defined in \eqref{finalNFlin},
\[
|D|^{-\alpha} e^{iA}\D_x B_{k, II}^{exp, 2}(\phi, v).
\]
We use Proposition~\ref{p:Bk-est} to rewrite with the $L$ notation, putting the derivative gain on the output at frequency $2^k$ by setting $\gamma_3 = 1$,
\[
|D|^{-\alpha} e^{iA}\D_x |D|^{-\alpha} L_k(|D|^{\alpha - 1} \phi, v).
\]
We then use Proposition~\ref{p:expBd}, Theorem~\ref{t:nonlinear}, and the bootstrap assumption \eqref{linBootstrapAssump} to estimate
\begin{equation*}
\begin{aligned}
\||D|^{-\alpha} e^{iA}\D_x |D|^{-\alpha} L_k(|D|^{\alpha - 1} \phi, v)\|_S &\lesssim 2^{(1 - 2\alpha)k} \| |D|^{\alpha - 1} \phi_{(k', k)}\|_{L^\infty} \|v_k\|_S \\
&\lesssim 2^{(1 - 2\alpha)k} \cdot 2^{(\alpha - 1)k}  2^{\frac14(3\alpha - 1)k} \eps c_k \cdot C2^\frac{k}{2} d_k \\
&\lesssim 2^{\frac14(1 - \alpha)k} C \eps d_k
\end{aligned}
\end{equation*}
as desired. The rest of the proof is a similar adaptation of the proof of Lemma~\ref{l:Bk-bootstrap}, with $\|v_k\|_S$ in the place of $\|\phi_k\|_S$.

\end{proof}

We now reduce the proof of the bootstrap estimate \eqref{linBootstrap} to the proof of the same estimate on $\tilde w_k^+$:
\begin{lemma} 

a) Assume the Strichartz bounds
\begin{equation}\label{w-bootstrap-strich}
\begin{aligned}
2^{-k}d_k^{-2}\|\tilde w_k^+\|_{S \cap S_{lat}}^2 \lesssim 1 + \eps C.
 \end{aligned}
\end{equation}
Then the same estimate holds for $v_k^+$. 

\

b) Assume the bilinear bounds
\begin{equation}\label{w-bootstrap}
\begin{aligned}
d_{j}^{-1} d_{k}^{-1} 2^{-\frac{j}{2}} 2^{\frac{\alpha}{2}\max(j, k)} \|\tilde w_{j}^+ \tilde \psi_{k}^+\|_{L^2} \lesssim 1 + \eps C.
 \end{aligned}
\end{equation}
Then the same estimate holds for $v_{j}^+ \phi_{k}^+$.

\

c) Given the bootstrap assumption \eqref{linBootstrapAssump}, we have
\begin{equation}\label{w-data}
\|\tilde w_k^+(0)\|_{H^{s - \half}} \lesssim c_k.
\end{equation}
\end{lemma}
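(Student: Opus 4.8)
The plan is to follow the scheme used above for the nonlinear normal form variable $\tilde\psi_k^+$, exploiting two facts: the correction $B_k^{lin}(\phi,v)$ is strictly lower order than $v_k^+$, and the exponential conjugation is invertible with constants controlled by $\AA$. Here I use that $e^{-iA}$ has symbol $e^{-ia}$, of the same form as $e^{ia}$ (with $-\Phi$ in place of $\Phi$), so Proposition~\ref{p:expBd} applies equally to $e^{-iA}$; in particular $\|e^{iA}f\|_{L^p}\approx_\AA\|f\|_{L^p}$, and $e^{\pm iA}$ is, modulo rapidly decaying tails, frequency preserving at dyadic frequency $2^k$. The bootstrap assumption \eqref{linBootstrapAssump} together with the already-established Theorem~\ref{t:nonlinear} yields $\AA,\BB\in L^\infty_t$ with norm $\lesssim C\eps\lesssim 1$, so all such implicit constants are harmless. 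In each part I write $w_k^+ = P_k^+ e^{iA}v_k^+ = \tilde w_k^+ + B_k^{lin}(\phi,v)$ and recover $v_k^+$ from $w_k^+$ via $v_k^+ = e^{-iA}w_k^+$ up to a frequency-truncation error, handled exactly as in the nonlinear lemma. For part a) this reduction gives $\|v_k^+\|_{S\cap S_{lat}}\lesssim_\AA\|\tilde w_k^+\|_{S\cap S_{lat}} + \|B_k^{lin}(\phi,v)\|_{S\cap S_{lat}}$, and part a) of Lemma~\ref{l:Bk-bootstraplin} bounds the last term by $2^{-k/2}d_k^{-1}\|B_k^{lin}(\phi,v)\|_{S\cap S_{lat}}\lesssim 2^{-\frac14(1+\alpha)k}C\eps\lesssim 1$ (the finitely many small $k$ absorbed into constants); combined with hypothesis \eqref{w-bootstrap-strich} for $\tilde w_k^+$ this yields the claimed bound for $v_k^+$.

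For part b), I first pass from $v_j^+\phi_k^+$ to $w_j^+\psi_k^+$: writing both factors through $e^{-iA}$ and using the bilinear-transfer bound $|e^{-iA}f\cdot e^{-iA}g|\lesssim_\AA L(|f|,|g|)$ of Proposition~\ref{p:expBd} together with the composition rule for $L$ forms, an $L^2$ bound on $w_j^+\,T_y\psi_k^+$ uniform in $y$ transfers to one on $v_j^+\phi_k^+$. I then expand
\[
w_j^+\psi_k^+ = \tilde w_j^+\tilde\psi_k^+ + \tilde w_j^+ B_k(\phi,\phi) + B_j^{lin}(\phi,v)\,\tilde\psi_k^+ + B_j^{lin}(\phi,v)\,B_k(\phi,\phi).
\]
The leading term is controlled by hypothesis \eqref{w-bootstrap}. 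The three cross terms are cubic or higher, and I estimate them exactly as the cross terms in part b) of the nonlinear lemma: a case split on $j>k$ versus $j\leq k$, and within each bilinear correction a further split according to whether its inner inputs are at frequency $\gtrsim\max(j,k)$, in the intermediate dyadic range, or at low frequency. For the high-frequency inner inputs the refined bounds of part b) of Lemmas~\ref{l:Bk-bootstrap} and \ref{l:Bk-bootstraplin} supply the extra decay $2^{-\frac14(1+\alpha)\max(j,k)}$; the intermediate range is handled by applying the bilinear bootstrap estimate to one inner input paired with the outer normal-form variable and putting the remaining factor in $L^\infty_{t,x}$ via \eqref{inftyest2} and \eqref{linBootstrapAssump}; the low-frequency inner inputs are absorbed using two Strichartz (or lateral Strichartz, when $\alpha$ is large) estimates. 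Throughout, Proposition~\ref{p:expBd} is used to push these bilinear and $L$-form estimates through the $e^{iA}$ conjugations, and the sum over the intermediate range converges thanks to the exponential gains. I expect this case analysis to be the main obstacle, as it requires carefully balancing the Strichartz, lateral Strichartz, and bilinear bootstrap estimates while keeping track of the uncorrelated translations present inside every $L$ form.

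For part c), I evaluate $\tilde w_k^+(0) = P_k^+ e^{iA(0)}v_k^+(0) - B_k^{lin}(\phi(0),v(0))$. Since $\AA(0)\lesssim\eps\ll 1$, Proposition~\ref{p:expBd} gives $\|P_k^+ e^{iA(0)}v_k^+(0)\|_{H^{s-\half}}\lesssim\|v_k^+(0)\|_{H^{s-\half}}\leq c_k$, using that $c_k$ is a frequency envelope for $v_0\in H^{s-\half}$. For the correction, part c) of Lemma~\ref{l:Bk-bootstraplin} together with its frequency localization gives $\|B_k^{lin}(\phi(0),v(0))\|_{H^{s-\half}}\lesssim 2^{-k/2}\|B_k^{lin}(\phi(0),v(0))\|_{H^s}\lesssim 2^{-\frac14(1+\alpha)k}C\eps\,c_k\ll c_k$. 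Adding the two contributions yields \eqref{w-data}.
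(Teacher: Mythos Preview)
Your proposal is correct and takes essentially the same approach as the paper: in each part you pass between $v_k^+$ and $\tilde w_k^+$ (resp.\ $v_j^+\phi_k^+$ and $\tilde w_j^+\tilde\psi_k^+$) via Proposition~\ref{p:expBd} and the correction bounds of Lemmas~\ref{l:Bk-bootstrap} and \ref{l:Bk-bootstraplin}, with the same high/intermediate/low case split on the inner inputs of the bilinear corrections in part b). Your four-term expansion in part b) is slightly more explicit than the paper's, which treats only $w_j^+B_k(\phi,\phi)$ in detail and declares the remaining cross terms similar; otherwise the arguments coincide.
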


\begin{proof}
$a)$ From the definition above \eqref{finalNFlin}, we have
\[
\tilde w_k^+ = w_k^+ - B_k^{lin}(\phi, v).
\]
Lemma~\ref{l:Bk-bootstraplin} provides the sufficient bound for $B_k^{lin}$, using that 
\[
d_k^{-1} \|B_k^{lin}(\phi, v)\|_{S \cap S_{lat}} \lesssim 2^{\frac14(1 - \alpha)k} C \eps \ll C \eps
\]
for large $2^k$.

\

$b)$ We have
\[
\tilde w_{j}^+ \tilde \psi_{k}^+ = (w_j^+ - B_j^{lin}(\phi, v)) (\psi_k^+ - B_k(\phi, \phi)).
\]
We estimate the cubic and higher terms to first reduce to $w_j^+ \psi_k^+$. Consider in particular 
\[
w_j^+B_k(\phi, \phi)
\]
with the other cases being similar. 

We first consider the case $j > k$, which we in turn reduce to two cases. When the inputs are also at frequency $\geq j$, we use the bootstrap assumption \eqref{linBootstrapAssump} with Proposition~\ref{p:expBd} on $w_j^+$, and $b)$ of Lemma~\ref{l:Bk-bootstrap} on $B_k$ (adapted to Theorem~\ref{t:nonlinear} instead of the bootstrap estimate, so we may drop the dependence on $C$):
\begin{equation*}
\begin{aligned}
\|w_j^+B_k(\phi_{\geq j}, \phi_{\geq j})\|_{L^2} \lesssim \|w_j^+\|_{L^4L^\infty} \|B_k(\phi_{\geq j}, \phi_{\geq j})\|_{S} &\lesssim 2^{\frac{j}{2}} 2^{\frac14(1 - \alpha)j}C d_j \cdot 2^{-\frac14(1 + \alpha)j}\eps^2 d_k\\
&\lesssim 2^{\frac{j}{2}}  2^{-\frac{\alpha}{2}j} C\eps^2 d_jd_k
\end{aligned}
\end{equation*}
as desired.

It remains to estimate 
\begin{equation}\label{wbjest}
w_j^+B_k(\phi_{< j}, \phi_{< j}),
\end{equation}
for which we use the bilinear bootstrap estimate. Similar to the fully nonlinear setting, we use Proposition~\ref{p:expBd} to apply bilinear estimates though the $e^{iA}$ conjugations, along with the gain from $b)$ of Lemma~\ref{l:Bk-bootstraplin} to compensate for the loss from the pointwise estimate on the remaining third variable:
\begin{equation*}
\begin{aligned}
\|w_j^+B_k(\phi_{\ell}, \phi_{\ell})\|_{L^2} &= 2^{-\alpha \ell}\|w_j^+e^{iA} L_k(\phi_{\ell}, \phi_{\ell})\|_{L^2} \\
&\lesssim 2^{-\alpha \ell} \|L(w_j, \phi_{\ell}, \phi_{\ell})\|_{L^2} \\
&\lesssim 2^{-\alpha \ell}\cdot 2^{\frac{j}{2}} 2^{-\frac{\alpha}{2}j} C\eps d_jd_{\ell} \cdot 2^{\frac14(3\alpha - 1)\ell}\eps c_{\ell} \\
&\lesssim 2^{\frac{j}{2}} 2^{-\frac{\alpha}{2}j}C\eps^2 d_jd_{\ell} \cdot 2^{-\frac14(1 + \alpha)\ell}.
\end{aligned}
\end{equation*}
Then summing, we obtain
\[
\|\psi_j^+B_k(\phi_{k < \cdot < j}, \phi_{k < \cdot < j})\|_{L^2} \lesssim 2^{\frac{j}{2}} 2^{-\frac{\alpha}{2}j}C\eps^2 d_jd_{k}
\]
as desired. A similar bilinear estimate for 
\[
w_j^+B_k(\phi_{k}, \phi_{< k}), \qquad w_j^+B_k(\phi_{< k}, \phi_{k})
\]
addresses the remaining components of \eqref{bjest}. A similar case analysis using the bilinear bootstrap estimate can be applied to address case when $j \leq k$. 

To conclude, we use Proposition~\ref{p:expBd} to reduce from $w_j^+w_k^+$ to $v_j^+v_k^+$.


\

$c)$ We use part $c)$ of Lemma~\ref{l:Bk-bootstraplin} to estimate $B_k$ and Proposition~\ref{p:expBd} to absorb $e^{iA}$.

\end{proof}

\subsection{Bounds on the source terms}

In this section, we bound the cubic and higher source terms $Q_k^{lin}$ of the equation \eqref{wfinalError} for $w_k^+$.

\begin{lemma}\label{l:source-bdlin}
The source terms $Q_k^{lin}$ of \eqref{wfinalError} satisfy the estimate
\[
\|Q_k^{lin}\|_{L^1_tL^2_x} \lesssim 2^{\frac{k}{2}} C\eps d_k.
\]
\end{lemma}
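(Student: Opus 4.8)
The plan is to mirror the proof of Lemma~\ref{l:source-bd} for the nonlinear equation, adapting each term of $Q_k^{lin}$ by replacing one of the two copies of $\phi$ with the linearized variable $v$ and tracking the resulting half-derivative loss $2^{k/2}$. The target estimate $\|Q_k^{lin}\|_{L^1_tL^2_x} \lesssim 2^{\frac{k}{2}} C\eps d_k$ is precisely the $H^s$-data estimate from Lemma~\ref{l:Bk-bootstraplin}(c) shifted by $2^{k/2}$, which is consistent with the fact that $v$ is measured in $H^{s-\frac12}$ rather than $H^s$. Throughout, I will use Proposition~\ref{p:expBd} to absorb the exponential conjugations $e^{iA}$ (and their commutators, which Lemma~\ref{l:expCom} identifies as operators of order 1), Proposition~\ref{p:Bk-est} to rewrite the bilinear normal form corrections in $L$-notation with the derivative gains distributed as needed, Theorem~\ref{t:nonlinear} to control the factors of $\phi$, and the bootstrap assumption \eqref{linBootstrapAssump} (together with the induced Strichartz and bilinear bounds on $v$) to control the factors of $v$.

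First I would handle the genuinely new term, $e^{iA}(v_{<0}\D_x\phi_k^+)$, which is the uncorrected quadratic remainder. Since $v_{<0}$ has frequency $\lesssim 1$, we can apply the variable-coefficient bilinear estimate of Proposition~\ref{p:bilinear} (in the form adapted to $L$-expressions via the uncorrelated-translation reduction) to the pair $v_{<0}$ and $\phi_k^+$, placing the output derivative $\D_x$ to cost $2^k$, and gaining $2^{-\alpha k/2}$ from the bilinear interaction at frequency $2^k$; combining with the $L^\infty L^2$ bound $\|v_{<0}\|_{L^\infty L^2}\lesssim C d_0 \lesssim C$ and the $L^\infty L^2$ bound on $\phi_k^+$ from Theorem~\ref{t:nonlinear} yields a bound of the shape $2^{k}\cdot 2^{-\alpha k/2}\cdot C\eps d_k$, which is $\lesssim 2^{k/2}C\eps d_k$ since $\alpha\geq 1$. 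Next I would treat $Q_k^{exp,3+}(v_k^+)$: this is structurally identical to $Q_k^{exp,3+}(\phi_k^+)$ from Lemma~\ref{l:source-bd}, with $\phi_k^+$ replaced by $v_k^+$, so the same case split (low-frequency inputs via Strichartz twice; high-frequency inputs via a bilinear estimate on two factors and Strichartz or lateral Strichartz on the third, with the sub-/super-$\frac53$ dichotomy in $\alpha$) applies verbatim, now producing $2^{k/2}$ more than before because $\|v_k^+\|_S \lesssim 2^{k/2}d_k$ rather than $\lesssim \eps d_k$ and one of the $\phi$ factors from the cubic term is instead $v$; the net effect is an extra $2^{k/2}$ and one fewer power of $\eps$, exactly matching the target.

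Then I would sweep up the commutator terms $[\phi_{\leq k'}\D_x, e^{iA}]v_k^+$, the terms $[(\D_t - |D|^\alpha\D_x), e^{iA}]B_{k,\cdot}$ (using Lemma~\ref{l:expCom} to treat this commutator as an order-1 operator acting on $B_{k,\cdot}$, whose $S$-norms are controlled by Lemma~\ref{l:Bk-bootstraplin}), the time-derivative-substitution terms $B_k^{lin}(\D_x(\phi^2), v)$ and $B_k^{lin}(\phi, \D_x(\phi v))$, and finally $\phi_{\leq k'}\D_x B_k^{lin}(\phi, v)$; each of these is cubic or higher, with one or two $\phi$ factors controlled by Theorem~\ref{t:nonlinear} and the remaining $v$ factor controlled by the bootstrap, and the derivative bookkeeping in Proposition~\ref{p:Bk-est} arranged so the surplus derivatives land where there is room. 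The main obstacle I anticipate is the bookkeeping for the $B_k^{lin}(\phi, \D_x(\phi v))$ term: here one must expand $\D_x(\phi v)$, which reintroduces a full derivative on a product of $\phi$ and $v$, and then verify that the smoothing built into $B_k^{lin}$ (Proposition~\ref{p:Bk-est}) together with the frequency-envelope summation absorbs this derivative with only the allowed $2^{k/2}$ loss, including in the delicate regime where the inner product $\phi v$ has frequencies far above $2^k$ and one must invoke part (b) of Lemma~\ref{l:Bk-bootstraplin} to recover the needed high-frequency gain; the rest is routine once the case organization of Lemma~\ref{l:source-bd} is in place.
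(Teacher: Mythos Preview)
Your proposal is correct and takes essentially the same approach as the paper: the paper likewise handles the cubic terms by mirroring the proof of Lemma~\ref{l:source-bd} (demonstrated on the same representative term $|D|^{-\alpha}e^{iA}\D_x P_{(k',k)}(\phi^2)v_k^+$, with the identical Strichartz/bilinear case split and the same $\alpha\leq\frac53$ versus $\alpha\geq\frac54$ dichotomy), then disposes of the quadratic remainder $e^{iA}(v_{<0}\D_x\phi_k^+)$ by a single bilinear estimate, and sweeps up the remaining cubic terms via Propositions~\ref{p:expBd} and~\ref{p:Bk-est} and Lemma~\ref{l:expCom}. The only differences are cosmetic: you treat the quadratic term first rather than last, and you are more explicit than the paper about the potentially delicate term $B_k^{lin}(\phi,\D_x(\phi v))$, which the paper simply folds into ``treated similarly.''
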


\begin{proof}

We consider the terms in $Q_k^{lin}$. We first discuss the cubic terms, starting with $Q_k^{exp, 3+}(v_k^+)$, which is provided in Lemma~\ref{l:expCom}. Here we demonstrate an estimate for the third term of $Q_k^{exp, 3+}(v_k^+)$, which is typical:
\[
|D|^{-\alpha}  e^{iA} \D_x P_{(k', k)} (\phi^2) v_k^+ = |D|^{-\alpha}  e^{iA} \D_x L_k(\phi, \phi, v_k^+).
\]
Using Proposition~\ref{p:expBd}, it suffices to show
\begin{equation}\label{source-est-lin}
\|L_k(\phi, \phi, v_k^+) \|_{L^1_tL^2_x} \lesssim 2^{\frac{k}{2}} 2^{k(\alpha - 1)} C\eps d_k.
\end{equation}

\

For the low frequency component on the first two inputs, we use Strichartz twice:
\begin{equation*}
\begin{aligned}
\|L_k(\phi_{\leq k}, \phi_{\leq k}, v_k^+)\|_{L^1_tL^2_x} &\lesssim \|\phi_{\leq k}\|_{L^4_t L^\infty_x}^2 \|v_k^+\|_{L^\infty_t L^2_x} \lesssim \eps^2 2^{-(1 - \alpha)k} \cdot 2^{\frac{k}{2}}C d_k
\end{aligned}
\end{equation*}
which suffices.

For the remaining high frequency components, let $j > k$ and consider
\[L_k(\phi_{j}, \phi_{j}, v_k^+).\]
When $\alpha \leq \frac53$, we use bilinear estimates on the latter two variables and Strichartz for the first:
\begin{equation*}
\begin{aligned}
\|L_k(\phi_{j}, \phi_{j}, v_k^+)\|_{L^1_tL^2_x} &\lesssim  2^{-\half(1 - \alpha)j} \eps c_{j} \cdot 2^{\frac{k}{2}} 2^{-\frac{\alpha}{2} j}C\eps d_{j} d_k \\
&\lesssim 2^{\frac{k}{2}} 2^{-\half j} 2^{-\frac34(1 - \alpha)j} C\eps^2 d_k \\
&= 2^{\frac{k}{2}} 2^{(-\frac54 + \frac34\alpha) j} C\eps^2 d_k \\
&\lesssim 2^{\frac{k}{2}} C\eps^2 d_k.
\end{aligned}
\end{equation*}
When $\alpha \geq \frac54$, and in particular when $\alpha > \frac53$, we use lateral Strichartz estimates, estimating $v_k^+$ in $L^4_x L^\infty_t$, and both instances of $\phi_j$ in $L^\infty_x L^2_t$. Precisely,
\begin{equation*}
\begin{aligned}
\|L_k(\phi_{j}, \phi_{j}, v_k^+)\|_{L^1_tL^2_x} &\lesssim \|L_k(\phi_{j}, \phi_{j}, v_k^+)\|_{L^2_{t, x}} \\
&\lesssim  2^{\frac12(\alpha - 3)j} (\eps c_j)^2 \cdot 2^{\frac{k}{2}} 2^{\frac{k}{4}} C d_k \\
&\lesssim 2^{\frac{k}{2}} 2^{\frac12(\alpha - 3)j} 2^{\frac{k}{4}} C\eps^2 d_k \\
&\lesssim 2^{\frac{k}{2}} 2^{k(\alpha - 1)}C\eps^2 d_k.
\end{aligned}
\end{equation*}

The other cubic terms of $Q_k^{lin}$ are treated similarly using Proposition~\ref{p:expBd} to treat the exponential conjugations, and Proposition~\ref{p:Bk-est} to estimate the bilinear forms. We also use Lemma~\ref{l:expCom} to see that $[(\D_t - |D|^\alpha \D_x), e^{iA}]$ is an operator of order 1.

\

Lastly, the quadratic term of $Q_k^{lin}$ is estimated using Proposition~\ref{p:expBd} followed by a bilinear estimate,
\[
\| e^{iA}( v_{< 0} \D_x \phi_k^+)\|_{L^1_tL^2_x} \lesssim 2^{-\frac{\alpha}{2} k}C\eps d_k
\]
which more than suffices.

\end{proof}

\subsection{Closing the bootstrap}

To complete the proof of the bootstrap estimate \eqref{linBootstrap}, it remains to prove the estimates \eqref{w-bootstrap-strich} and \eqref{w-bootstrap} on $\tilde w_k^+$. For the former, we apply Theorem~\ref{t:strichartz-transport} and a straightforward energy estimate to obtain
\[
\|\tilde w_k^+\|_{S \cap S_{lat}} \lesssim \|\tilde w_k^+(0)\|_{L^2} + \|Q_k^{lin}\|_{L^1L^2}.
\]
Using \eqref{w-data} for $\tilde w_k^+(0)$ and Lemma~\ref{l:source-bdlin} for $Q_k^{lin}$, we obtain
\begin{equation}\label{psi-bootstrap-strich-pf-lin}
\|\tilde w_k^+\|_{S \cap S_{lat}} \lesssim 2^{\frac{k}{2}} d_k + 2^{\frac{k}{2}} C\eps d_k
\end{equation}
as desired.

For the bilinear estimate \eqref{w-bootstrap}, we likewise apply Proposition~\ref{p:bilinear} to obtain
\begin{equation*}
\begin{aligned}
 \|\tilde w_j^+\tilde \psi_k^+\|_{L^2} &\lesssim 2^{-\frac{\alpha}{2}\max(j, k)}(\|\tilde w_j^+\|_{L^\infty L^2}\|\tilde \psi_k^+\|_{L^\infty L^2)} \\
&\quad + \|Q_j^{lin}\|_{L^1L^2}\|\tilde \psi_k^+\|_{L^\infty L^2}  + \|Q_k\|_{L^1L^2}\|\tilde w_j^+\|_{L^\infty L^2}). 
\end{aligned}
\end{equation*}
Then using \eqref{psi-bootstrap-strich-pf-lin} and Lemma~\ref{l:source-bdlin}, we obtain
\[
 \|\tilde w_j^+\tilde \psi_k^+\|_{L^2} \lesssim 2^{\frac{j}{2}} 2^{-\frac{\alpha}{2}\max(j, k)}(\eps + C \eps^2  + \eps^3)d_j d_k
\]
as desired.

\section{Local well-posedness}

The local well-posedness for \eqref{BO}, stated in Theorem~\ref{t:lwp}, follows from the a priori estimates of Theorems~\ref{t:nonlinear} and \ref{t:linear} for the nonlinear and linearized equations, respectively. The argument does not differ substantially from the case of the classical Benjamin-Ono equation with $\alpha = 1$ (see \cite{ifrim2017well}). For completeness, we record the main steps here.

By scaling, we assume small initial data $\phi_0 \in H^s$, $s > \frac34(1 - \alpha)$, satisfying
\begin{equation}\label{lwp-eps}
\|\phi_0\|_{H^s} \leq \eps \ll 1.
\end{equation}
We consider the sequence of regularized data $\phi^{(n)}(0) = P_{< n} \phi_0$, which uniformly admits a frequency envelope
\[
\|P_k \phi^{(n)}(0)\|_{H^s} \leq \eps c_k.
\]
Then by Theorem~\ref{t:nonlinear}, the corresponding solutions $\phi^{(n)}$ exist and uniformly satisfy the bound
\[
\|P_k \phi^{(n)}\|_{S^{s}} \leq \eps c_k,
\]
where we use the notation
\[
S^s = \langle D \rangle^{-s} S.
\]

On the other hand, using the bounds for the linearized equation from Theorem~\ref{t:linear}, the differences satisfy
\[
\|\phi^{(n)} - \phi^{(m)}\|_{S^{s - \half}} \lesssim (2^{-\frac{n}{2}} + 2^{-\frac{m}{2}}) \eps.
\]
As a result, the sequence $\phi^{(n)}$ converges to some function $\phi \in S^{s - \half}$, which satisfies
\begin{equation}\label{highfreq}
\|P_k \phi\|_{S^s} \leq \eps c_k.
\end{equation}
Further, we have convergence in $\ell^2_k S^s$, as follows. For fixed $k$,
\[
\limsup_{n \rightarrow \infty} \|\phi^{(n)} - \phi\|_{\ell^2 S^s} \leq \|P_{\geq k} \phi\|_{\ell^2 S^s} + \limsup_{n \rightarrow \infty} \|P_{\leq k} (\phi^{(n)} - \phi)\|_{\ell^2 S^s}  + \|P_{\geq k} \phi^{(n)}\|_{\ell^2 S^s} \lesssim c_{\geq k}.
\]
Letting $k \rightarrow \infty$, we obtain 
\[
\lim_{n \rightarrow \infty} \|\phi^{(n)} - \phi\|_{\ell^2 S^s} = 0,
\]
which implies that $\phi$ satisfies \eqref{BO} in the sense of distributions.

We next show continuous dependence on data in $H^s$. Consider a sequence of data $\phi^{(n)}(0)$ satisfying \eqref{lwp-eps} uniformly, and such that
\[
\lim_{n \rightarrow \infty} \|\phi^{(n)}(0) - \phi_0\|_{H^s} = 0.
\]
We again use the decomposition 
\[
\phi^{(n)} - \phi = - P_{\geq k} \phi + P_{\leq k} (\phi^{(n)} - \phi) - P_{\geq k} \phi^{(n)},
\]
where the contributions from the first and second terms vanish after taking $k \rightarrow \infty$ by the frequency envelope bound \eqref{highfreq} and the weak Lipschitz dependence, respectively. It remains to show
\[
\lim_{k \rightarrow \infty} \limsup_{n \rightarrow \infty} \|P_{\geq k} \phi^{(n)}\|_{\ell^2 S^s} = 0.
\]

Given $\delta > 0$, we have
\[
\|\phi^{(n)}(0) - \phi_0\|_{H^s} \leq \delta, \qquad n \geq N(\delta).
\]
Let $\eps c_k$ be an $H^s$ frequency envelope for $\phi_0$, and $\delta c_k(n)$ be an $H^s$ frequency envelope for $\phi^{(n)}(0) - \phi_0$. Then $\eps c_k + \delta c_k(n)$ is an $H^s$ frequency envelope for $\phi^{(n)}(0)$, and by \eqref{highfreq} we obtain for $n \geq N(\delta)$,
\[
\|P_{\geq k} \phi^{(n)}\|_{\ell^2 S^s} \lesssim \eps c_{\geq k} + \delta c_{\geq k}(n) \lesssim \eps c_{\geq k} + \delta.
\]
Thus 
\[
\lim_{k \rightarrow \infty} \limsup_{n \rightarrow \infty} \|P_{\geq k} \phi^{(n)}\|_{\ell^2 S^s} \lesssim \delta
\]
with arbitrary $\delta > 0$, concluding the proof.

\bibliography{allbib}

\begin{thebibliography}{10}

\bibitem{capillary}
Albert Ai.
\newblock Improved low regularity theory for gravity-capillary waves, 2023.

\bibitem{alazard2011strichartz}
Thomas Alazard, Nicolas Burq, and Claude Zuily.
\newblock Strichartz estimates for water waves.
\newblock {\em Ann. Sci. \'{E}c. Norm. Sup\'{e}r. (4)}, 44(5):855--903, 2011.

\bibitem{FBId}
Jean-Marc Delort.
\newblock {\em F.{B}.{I}. transformation}, volume 1522 of {\em Lecture Notes in
  Mathematics}.
\newblock Springer-Verlag, Berlin, 1992.
\newblock Second microlocalization and semilinear caustics.

\bibitem{herr}
Sebastian Herr.
\newblock Well-posedness for equations of {B}enjamin-{O}no type.
\newblock {\em Illinois J. Math.}, 51(3):951--976, 2007.

\bibitem{herr2010differential}
Sebastian Herr, Alexandru~D Ionescu, Carlos~E Kenig, and Herbert Koch.
\newblock A para-differential renormalization technique for nonlinear
  dispersive equations.
\newblock {\em Communications in Partial Differential Equations},
  35(10):1827--1875, 2010.

\bibitem{ifrim2022dispersive}
Mihaela Ifrim, Herbert Koch, and Daniel Tataru.
\newblock Dispersive decay of small data solutions for the kdv equation, 2022.

\bibitem{ifrim2017well}
Mihaela Ifrim and Daniel Tataru.
\newblock Well-posedness and dispersive decay of small data solutions for the
  benjamin-ono equation.
\newblock {\em arXiv preprint arXiv:1701.08476}, 2017.

\bibitem{ITprimer}
Mihaela {Ifrim} and Daniel {Tataru}.
\newblock {Local well-posedness for quasilinear problems: a primer}.
\newblock {\em arXiv e-prints}, page arXiv:2008.05684, August 2020.

\bibitem{IKbo}
Alexandru~D. Ionescu and Carlos~E. Kenig.
\newblock Global well-posedness of the {B}enjamin-{O}no equation in
  low-regularity spaces.
\newblock {\em J. Amer. Math. Soc.}, 20(3):753--798, 2007.

\bibitem{KLVsharp}
Rowan {Killip}, Thierry {Laurens}, and Monica {Visan}.
\newblock {Sharp well-posedness for the Benjamin--Ono equation}.
\newblock {\em arXiv e-prints}, page arXiv:2304.00124, March 2023.

\bibitem{marzuola2008wave}
Jeremy Marzuola, Jason Metcalfe, and Daniel Tataru.
\newblock Wave packet parametrices for evolutions governed by {PDO}'s with
  rough symbols.
\newblock {\em Proceedings of the American Mathematical Society},
  136(2):597--604, 2008.

\bibitem{BOill}
L.~Molinet, J.~C. Saut, and N.~Tzvetkov.
\newblock Ill-posedness issues for the {B}enjamin-{O}no and related equations.
\newblock {\em SIAM J. Math. Anal.}, 33(4):982--988, 2001.

\bibitem{MPcauchy}
Luc Molinet and Didier Pilod.
\newblock The {C}auchy problem for the {B}enjamin-{O}no equation in {$L^2$}
  revisited.
\newblock {\em Anal. PDE}, 5(2):365--395, 2012.

\bibitem{molinet2018well}
Luc Molinet, Didier Pilod, and St{\'e}phane Vento.
\newblock On well-posedness for some dispersive perturbations of {B}urgers'
  equation.
\newblock In {\em Annales de l'Institut Henri Poincar{\'e} C, Analyse non
  lin{\'e}aire}. Elsevier, 2018.

\bibitem{Tlowreg}
Blaine Talbut.
\newblock Low regularity conservation laws for the {B}enjamin-{O}no equation.
\newblock {\em Math. Res. Lett.}, 28(3):889--905, 2021.

\bibitem{Ltao}
Terence Tao.
\newblock Global regularity of wave maps. {II}. {S}mall energy in two
  dimensions.
\newblock {\em Comm. Math. Phys.}, 224(2):443--544, 2001.

\bibitem{taoBO}
Terence Tao.
\newblock Global well-posedness of the {B}enjamin-{O}no equation in {$H^1({\bf
  R})$}.
\newblock {\em J. Hyperbolic Differ. Equ.}, 1(1):27--49, 2004.

\bibitem{tao}
Terence Tao.
\newblock Global well-posedness of the {B}enjamin-{O}no equation in {$H^1({\bf
  R})$}.
\newblock {\em J. Hyperbolic Differ. Equ.}, 1(1):27--49, 2004.

\bibitem{taobook}
Terence Tao.
\newblock {\em Nonlinear dispersive equations}, volume 106 of {\em CBMS
  Regional Conference Series in Mathematics}.
\newblock Conference Board of the Mathematical Sciences, Washington, DC; by the
  American Mathematical Society, Providence, RI, 2006.
\newblock Local and global analysis.

\bibitem{tataru2004phase}
Daniel Tataru.
\newblock Phase space transforms and microlocal analysis.
\newblock {\em Phase space analysis of partial differential equations},
  2:505--524, 2004.

\end{thebibliography}
\bibliographystyle{plain}

\end{document}